\documentclass[11pt,a4paper]{amsart}
\usepackage{amssymb}
\usepackage{mathabx}
\usepackage{mathrsfs}
\usepackage{syntonly}
\usepackage{amsmath}
\usepackage{amsthm}
\usepackage{amsfonts}
\usepackage{amssymb}
\usepackage{latexsym}
\usepackage{amscd,amssymb,amsopn,amsmath,amsthm,graphics,amsfonts,mathrsfs,accents,enumerate,verbatim,calc}
\usepackage[dvips]{graphicx}
\usepackage[colorlinks=true,linkcolor=red,citecolor=blue]{hyperref}
\usepackage[all]{xy}

\date{}
\allowdisplaybreaks[4] \footskip=15pt
\renewcommand{\uppercasenonmath}[1]{}

\numberwithin{equation}{section}
 \theoremstyle{plain}
\newtheorem{lem}{Lemma}[section]
\newtheorem{cor}[lem]{Corollary}
\newtheorem{prop}[lem]{Proposition}
\newtheorem{thm}[lem]{Theorem}

\newtheorem{Quest}[lem]{Question}
\newtheorem{Property}[lem]{Property}
\newtheorem{Properties}[lem]{Properties}
\newtheorem{Subprops}{}[lem]
\newtheorem{Para}[lem]{}

\theoremstyle{remark}
\newtheorem{rec}[lem]{Recollection}
\newtheorem{hyp}[lem]{Hypothesis}

\newtheorem{definition}[lem]{Definition}
\newtheorem{Ex}[lem]{Example}
\newtheorem{cons}[lem]{Construction}
\newtheorem{nota}[lem]{Notation}
\newtheorem{rem}[lem]{Remark}

\newenvironment{df}{\begin{definition}\rm}{\end{definition}}
\newenvironment{ex}{\begin{Ex}\rm}{\end{Ex}}
\newenvironment{quest}{\begin{Quest}\rm}{\end{Quest}}

\newtheorem*{ack*}{ACKNOWLEDGEMENTS}

\newcommand{\pf}{\noindent\begin {proof}}
\newcommand{\epf}{\end{proof}}

\begin{document}
\begin{center}
{\large  \bf  THE TELESCOPE CONJECTURE FOR GLOBAL REPRESENTATIONS AND FI-MODULES}

\vspace{0.5cm}  Peng Xu$\footnote{Corresponding author. Peng Xu is supported by the National Natural Science Foundation of China (Grant No. 12231009, 11971224) }$

\end{center}

\bigskip
\centerline { \bf  Abstract}
\medskip

\leftskip10truemm \rightskip10truemm \noindent\hspace{1em}  In this paper, we classify the localizing ideals of the derived category $\mathrm{D}(\mathcal{U})$ of global representations over a  field $k$ of characteristic zero, for various infinite families $\mathcal{U}$ of finite groups. These families include elementary abelian $p$-groups, cyclic $p$-groups, and cyclic groups of prime order together with the trivial group. We deduce that the telescope conjecture holds for these $\mathrm{D}(\mathcal{U})$. In particular, via Pontryagin duality, our results for elementary abelian $p$-groups also establish the telescope conjecture and the corresponding classification for derived $\mathrm{VI}$-modules. We also prove that the telescope conjecture holds for the derived category of $\mathrm{FI}$-modules.  \\[2mm]
{\bf Keywords:} tensor triangular geometry; localizing ideal; global representation;  telescope conjecture\\
{\bf 2020 Mathematics Subject Classification:} { 18G80, 20C99}

\leftskip0truemm \rightskip0truemm
\section { \bf Introduction}
The telescope conjecture is well understood in many rigidly-compactly generated tensor triangulated categories, including the derived category of a commutative noetherian ring \cite{Nee92}, the stable module category of a finite group or finite group scheme \cite{BBIKP25,BIK11,BIKP18}, the derived category of Artin motives over a finite field \cite{BG22, BG25}, and the category of rational $G$-spectra for any compact Lie group $G$ \cite{BHS23}, where Balmer-Favi support and stratification provide powerful mechanisms for controlling smashing tensor ideals. Considerably less is known beyond the rigid setting. In particular, there is no general support theory that classifies localizing tensor ideals in an arbitrary compactly generated tensor triangulated category whose compact objects are not rigid.

In this paper we study this problem for derived categories arising in global representation theory and representation stability. Let $\mathcal{U}$  be a family of finite groups and let $$\mathrm{A}(\mathcal{U})=\mathrm{Fun}(U^{op}, \mathrm{Mod}~k)$$
be  the Grothendieck category of $\mathcal{U}$-global representations over a field $k$ of characteristic zero.  Its derived category $\mathrm{D}(\mathcal{U})$ is compactly generated and symmetric monoidal, but is generally not rigidly-compactly generated. These categories form the algebraic counterpart of rational global homotopy theory and are closely connected with $\mathrm{FI}$ and $\mathrm{VI}$ modules.

Our main results determine the large tensor triangular geometry of these categories. We classify all localizing tensor ideals of $\mathrm{D}(\mathcal{U})$ for three infinite families of finite groups: cyclic groups of prime order together with the trivial group, cyclic $p$-groups, and elementary abelian $p$-groups. We also classify all localizing tensor ideals in the derived category of $\mathrm{FI}$-modules. In each case the classification is governed by homological support. As a consequence, every smashing tensor ideal is generated by compact objects, and hence the telescope conjecture holds.

These results go beyond the classification of thick tensor ideals in the compact subcategory. Indeed, the passage from $\mathrm{D}(\mathcal{U})^{c}$ to the full derived category $\mathrm{D}(\mathcal{U})$ is not formal in the absence of rigidity, and the usual Balmer-Favi and stratification machinery does not apply. Our approach instead uses characteristic objects, the torsion theory of the underlying Grothendieck category, and a tensor-minimal generic quotient. This produces a torsion-generic decomposition of localizing ideals and allows homological support to control arbitrary, possibly non-compact, objects.

A further feature is that the geometry governing the big derived category need not coincide directly with the Balmer geometry of its compact objects. For elementary abelian $p$-groups and $\mathrm{FI}$-modules, the classification is naturally expressed in terms of the tensor abelian spectrum of the noetherian heart. Thus the results also exhibit a connection between tensor abelian geometry and localizing tensor ideals in non-rigid derived categories.

The classification of all localizing ideals is substantially stronger than the telescope conjecture itself. It determines not only the smashing localizations but the entire lattice of tensor-closed localizing subcategories. Moreover, it identifies this lattice with an explicit topological or tensor-abelian parameter space. Theorem \ref{thm3.11} provides a complete classification of localizing ideals of $\mathrm{D}(\mathfrak{C}_{p})$, where $\mathfrak{C}_{p}$ denotes the family of cyclic groups of prime order together with the trivial group.

\vspace{2mm}
 {\bf Theorem 1.1 }  The homological support induces a bijection  \[
\{\text{localizing ideals of } \mathrm{D}(\mathfrak{C}^{\mathrm{pr}})\}
\simeq
\{\text{subsets of } \mathrm{Spc}(\mathrm{D}(\mathfrak{C}^{\mathrm{pr}})^{c}\}.
\]
For the noetherian family $\mathfrak{C}_{p}$ of cyclic $p$-groups, Theorem \ref{thm4.15} shows there is a bijection between the localizing ideals of $\mathrm{D}(\mathfrak{C}_{p})$ and the open subsets of $\mathrm{Spc}(\mathrm{D}(\mathfrak{C}_{p})^{c})$.

\vspace{2mm}
 {\bf Theorem 1.2 }The homological support induces the following bijections:
\[
\{\text{localizing ideals of } \mathrm{D}(\mathfrak{C}_{p})\}
\simeq
\{\text{localizing ideals of } \mathrm{A}(\mathfrak{C}_{p})\}
\simeq
\{\text{open subsets of } \mathrm{Spc}(\mathrm{D}(\mathfrak{C}_{p})^{c}\}.
\]
The localizing ideals of $\mathrm{D}(E_{p})$ and of the derived category of $\mathrm{FI}$-modules are classified by Theorems \ref{thm5.3} and \ref{thm6.13}. Here $E_{p}$ denotes the family of elementary abelian $p$-groups. Note that the Balmer spectrum of $\mathrm{D}(E_{p})^{c}$ is not homeomorphic to $\mathbb{N}^{*}$; instead, we use the tensor abelian spectrum (see \cite{Xu26}).

\vspace{2mm}
 {\bf Theorem 1.3 }Let $\mathcal{A}$ be $\mathrm{A}(E_{p})$ or the category of $\mathrm{FI}$-modules, with $\mathcal{A}^{c}$ the subcategory of finitely generated objects and $\mathcal{D}$ the derived category of $\mathcal{A}$. Then the homological support induces the following bijections:
\[
\{\text{localizing ideals of } \mathcal{D}\}
\simeq
\{\text{localizing ideals of } \mathcal{A}\}
\simeq
\{\text{open subsets of } \mathrm{Spc}(\mathcal{A})^{c}\}.
\]
Although the resulting classifications have a common support-theoretic form, the mechanisms are not identical. For $\mathfrak{C}^{\mathrm{pr}}$, arbitrary subsets occur. For cyclic $p$-groups, the generic point imposes a cofiniteness condition. For elementary abelian $p$-groups and $\mathrm{FI}$-modules, the appropriate parameter space is the tensor abelian spectrum rather than the Balmer spectrum of compact objects. These differences illustrate how the large tensor-triangular geometry depends both on the asymptotic representation theory of the indexing category and on the relation between the compact and noetherian parts. 

In each of these cases, our classification also identifies the smashing ideals and shows that they are generated by compact objects. We therefore obtain the following consequence.

\vspace{2mm}
 {\bf Theorem 1.4 }  The telescope conjecture holds for $\mathrm{D}(\mathfrak{C}^{\mathrm{pr}})$, $\mathrm{D}(\mathfrak{C}_{p})$, $\mathrm{D}(E_{p})$ and the derived category of $\mathrm{FI}$-modules.

Note that Pontryagin duality induces an equivalence between the category of $\mathrm{VI}$-modules (over $k$) and $\mathrm{A}(E_{p})$. Consequently, our results also classify the localizing tensor ideals of the derived category of $\mathrm{VI}$-modules and establish the telescope conjecture for the derived categories of both $\mathrm{VI}$-modules and $\mathrm{FI}$-modules. This leads naturally to the following question: does $\mathrm{D}(\mathcal{U})$ satisfy the telescope conjecture for every global family $\mathcal{U}$?

\textbf{Conventions}: Throughout, we  will work over a field $k$ of characteristic 0. We write $G\twoheadrightarrow H$ to denote a surjective group homomorphism. All categories considered in this paper are assumed to be small. By a subcategory, we always mean one that is replete and full.

\section { \bf Preliminaries}
This section contains some brief reminders of various results, constructions, and topics that will be used in the sequel together with some pertinent references. We begin with recollections on the global representation theory and FI-modules. We then review some  results on subcategories of compactly generated tensor triangulated categories.

\subsection{Global representation theory}
We refer the reader to \cite{BBP+25a, BBP+25b,PS22} for more details on global representation theory.

Throughout this paper,  let $\mathcal{G}$ be the category of finite groups and conjugacy classes of surjective group homomorphisms. A replete full subcategory $\mathcal{U}$ of $\mathcal{G}$ is called closed downwards if for any surjective group homomorphism $G\twoheadrightarrow H$, $G\in \mathcal{U}$ implies $H\in \mathcal{U}$. $\mathcal{U}$  is called closed upwards if for any surjective group homomorphism $H\twoheadrightarrow G$, $G\in \mathcal{U}$ implies $H\in \mathcal{U}$. $\mathcal{U}$  is called widely closed  if whenever $G\twoheadleftarrow H\twoheadrightarrow K$ are surjective homomorphisms with $G,H,K\in \mathcal{U}$, the image of the combined morphism $H\rightarrow G\times K$ is also in $\mathcal{U}$. In particular, if $\mathcal{U}$ is closed downwards or closed upwards, then $\mathcal{U}$ is widely closed. $\mathcal{U}$ is called essentially finite if it contains only finitely many isomorphism classes of objects. We write $\pi_{0}(\mathcal{U})$ for the set of isomorphism classes of objects of $\mathcal{U}$. We denote by $\mathcal{U}_{> n}$ the full subcategory of $\mathcal{U}$ consisting of groups of order greater than
n, and by $\mathcal{U}_{\leq n}$ the full subcategory consisting of groups of order at most n.  The upwards closure of a subset $S\subset \pi_{0}(\mathcal{U})$ is $$\uparrow(S):=\{~[G]\in \pi_{0}(\mathcal{U}) |~\exists [H]\in S:~G\twoheadrightarrow H\}.$$

\begin{rec}
 Given a replete full subcategory $\mathcal{U}\subseteq \mathcal{G}$, we denote by $$\mathrm{A(\mathcal{U}):=Fun(\mathcal{U}^{op};Mod}~k)$$ the abelian category of functors $\mathrm{\mathcal{U}^{op}\rightarrow Mod}~k.$ Let $X\in \mathrm{A(\mathcal{U})}$, the (index) support of $X$ is defined to be $\mathrm{supp}(X):=\{G\in\pi_{0}(\mathcal{U})|X(G)\neq 0\}$. When $\{G\}$ is the replete full subcategory spanned by a single finite group $G$, we have $\mathrm{A}(\{G\})\simeq \mathrm{Mod}~k[\mathrm{Out}(G)]$.

Let $\mathcal{U}$ be a subcategory and $G\in \mathcal{U}$. There is an evaluation functor $$ev_{G}:\mathrm{A}(\mathcal{U})\rightarrow \mathrm{A}(\{G\})\quad\quad\quad X\mapsto X(G)$$ which admits a left adjoint $$e_{G,\bullet}:\mathrm{A}(\{G\})\rightarrow \mathrm{A}(\mathcal{U})\quad\quad\quad V\mapsto e_{G,V}\cong V\otimes_{k[\mathrm{Out}(G)]}k[\mathrm{Hom}_{\mathcal{U}}(-,G)].$$ Set $e_{G}:=e_{G,k[\mathrm{Out}(G)]}$, then we have $\mathrm{Hom}_{\mathrm{A}(\mathcal{U})}(e_{G},X)\cong X(G)$. Let V be an $\mathrm{Out}(G)$-representation, the object $\chi_{G,V}$ is defined as follows: $\chi_{G,V}(H)=e_{G,V}(H)$ if $G\cong H$ and  $\chi_{G,V}(H)=0$ otherwise. There is a map $e_{G,V}\rightarrow \chi_{G,V}$ induced by the identity map of $V$.

We have the following facts about $\mathrm{A}(\mathcal{U})$.
\begin{enumerate}
\item $\mathrm{A}(\mathcal{U})$ is a Grothendieck abelian category with generators given by $e_{G}$ for each $G\in \mathcal{U}$.

\item $\mathrm{A}(\mathcal{U})$ is a symmetric monoidal category with pointwise tensor product, i.e.,$(X\otimes Y)(G)=X(G)\otimes Y(G)$ for $X,Y\in \mathrm{A}(\mathcal{U})$ and $G\in \mathcal{U}$. The tensor unit $\mathrm{1}$ is the constant functor with value $k$ and all maps the identity. When $\mathcal{U}$ contains the trivial group, then $\mathrm{1}\cong e_{1}$.

\item Each object in $\mathrm{A}(\mathcal{U})$ is flat.

\end{enumerate}
Recall that an object $X\in \mathrm{A}(\mathcal{U})$ is finitely generated if there exists an epimorphism $\oplus^{n}_{i=1}e_{G_{i}}\twoheadrightarrow X$. For $X\in \mathrm{A}(\mathcal{U})$ and $x\in X(G)$,  $x$ is called torsion if there exists $\alpha:H\twoheadrightarrow G$ such that $\alpha^{*}(x)=0$, $x$ is called torsion-free if it is not torsion. $X$ is called torsion if it consists of torsion elements.
\end{rec}

\begin{df}Let $\mathcal{U}\subseteq \mathcal{G}$ be a subcategory. The derived category of global representations is $$\mathrm{D}(\mathcal{U}):=\mathrm{D}(\mathrm{A}(\mathcal{U})).$$
\end{df}

\begin{cons}Note that the functor $e_{G,\bullet}$ is exact. It then follows that the adjoint pair $(e_{G,\bullet}, ev_{G})$ descends to an adjunction at the level of derived categories without deriving the functors. For $V\in \mathrm{D}(\{G\})$, we let $\chi_{G,V}$ denote the object $$\chi_{G,V}:=e_{G,V}\otimes \chi_{G,k}.$$
\end{cons}
\begin{hyp}\label{hyp2.3}Let $\mathcal{U}\subseteq \mathcal{G}$ be a subcategory which is full, replete, and widely closed, and for which the tensor unit of $\mathrm{D}(\mathcal{U})$ is compact.
\end{hyp}

\begin{prop}Let $\mathcal{U}\subseteq \mathcal{G}$ be a subcategory satisfying Hypothesis \ref{hyp2.3}. Then the objects $\{e_{G}~|~G\in \mathcal{U}\}$ form an essentially small class of compact objects that generates the derived category $\mathrm{D}(\mathcal{U})$. Moreover, the subcategory of compact objects $\mathrm{D}(\mathcal{U})^{c}$ is  symmetric monoidal.
\end{prop}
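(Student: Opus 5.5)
Three things need proof: each $e_G$ is compact in $\mathrm{D}(\mathcal{U})$, the $e_G$ generate, and compact objects are closed under the derived tensor product.

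\textbf{Compactness of $e_G$.} I would use the derived adjunction $(e_{G,\bullet}, ev_G)$ from the Construction. Since $e_{G,\bullet}$ and $ev_G$ are both exact, no deriving is needed, and
\[
\mathrm{Hom}_{\mathrm{D}(\mathcal{U})}(e_G,X[n])\cong \mathrm{Hom}_{\mathrm{D}(\{G\})}(k[\mathrm{Out}(G)],X(G)[n]) \cong H^n(X(G)).
\]
Here the second isomorphism holds because $k[\mathrm{Out}(G)]$ is free, hence projective, in $\mathrm{Mod}\,k[\mathrm{Out}(G)]$. Evaluation at $G$ commutes with arbitrary direct sums, since sums in a functor category are computed pointwise, and cohomology commutes with direct sums. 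Hence $\mathrm{Hom}(e_G,-)$ preserves coproducts, so $e_G$ is compact.

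\textbf{Generation.} If $\mathrm{Hom}(e_G,X[n])=0$ for all $G$ and $n$, then $H^n(X)(G)=0$ for every $G$ and $n$, so $X$ is acyclic and $X\cong 0$. Essential smallness is clear because the $e_G$ are indexed by the set $\pi_0(\mathcal{U})$ of isomorphism classes of finite groups. So far Hypothesis \ref{hyp2.3} has not been used.

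\textbf{Monoidal closure of $\mathrm{D}(\mathcal{U})^c$.} The tensor product on $\mathrm{D}(\mathcal{U})$ is the pointwise one, and it is already exact because every object of $\mathrm{A}(\mathcal{U})$ is flat. It preserves coproducts in each variable.

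By generation, $\mathrm{D}(\mathcal{U})^c$ is the thick closure of $\{e_G\}$. It therefore suffices to show two things:
\begin{itemize}
\item the unit $1$ is compact, which is exactly Hypothesis \ref{hyp2.3};
\item $e_G\otimes e_H$ is compact for all $G,H\in\mathcal{U}$.
\end{itemize}
Indeed, for fixed compact $C$, the objects $Y$ with $C\otimes Y$ compact form a thick subcategory, so a two-step thick-subcategory argument extends compactness from the generators to all compacts.

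For the product of generators, compute pointwise:
\[
(e_G\otimes e_H)(K)=k[\mathrm{Hom}(K,G)]\otimes k[\mathrm{Hom}(K,H)]\cong k[\mathrm{Hom}(K,G)\times\mathrm{Hom}(K,H)].
\]
A pair $(\alpha,\beta)$ of surjections out of $K$ factors through the image $L\subseteq G\times H$ of $K\to G\times H$. This $L$ lies in $\mathcal{U}$ by wide closure, and $L$ surjects onto both $G$ and $H$. Up to isomorphism there are finitely many such $L$, namely subgroups of $G\times H$ surjecting onto both factors, together with their pairs of projections. This yields a decomposition
\[
e_G\otimes e_H\cong\bigoplus_{[L]} e_{L,V_L},
\]
where $V_L$ is the permutation $\mathrm{Out}(L)$-module on the relevant set of subgroups. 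Since $\mathrm{char}\,k=0$, Maschke's theorem makes $V_L$ a summand of a finite free $k[\mathrm{Out}(L)]$-module, so $e_{L,V_L}$ is a summand of a finite sum of copies of $e_L$ and hence compact.

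\textbf{Main obstacle.} The hard step is getting this decomposition exactly right up to conjugacy. A surjection $K\twoheadrightarrow L$ must be determined, up to $\mathrm{Out}(L)$, by the pair $(\alpha,\beta)$, and morphisms in $\mathcal{G}$ are only conjugacy classes, which needs care. I would first set up a natural map $\bigoplus_L e_{L}\otimes_{k[\mathrm{Out}(L)]}k[\text{embeddings }L\hookrightarrow G\times H]\to e_G\otimes e_H$ and then check bijectivity on each $K$. If this is too delicate, the fallback is to cite the corresponding computation in \cite{BBP+25a}.
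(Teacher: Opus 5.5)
Your proof is correct. The paper gives no argument of its own here: it simply cites \cite[Propositions 2.2 and 2.4]{BBP+25b}, whereas you reconstruct the proof directly.

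What your route buys is transparency about where each hypothesis enters. Compactness and generation use only that $e_G$ is projective with $\mathrm{Hom}(e_G,-)\cong ev_G$. Via Neeman's identification $\mathrm{D}(\mathcal{U})^c=\mathrm{thick}\langle e_G\mid G\in\mathcal{U}\rangle$ and your two-step thick-subcategory argument, closure under tensor products reduces to two facts. The first is compactness of $1$, which is exactly where the unit hypothesis is used. The second is compactness of $e_G\otimes e_H$, the only place wide closure is used. What citing buys is brevity: the combinatorics of $e_G\otimes e_H$ has already been checked in the literature.

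Since you flag that decomposition as the obstacle, here is the precise form in which it holds:
\[
e_G\otimes e_H\;\cong\;\bigoplus_{[L]\in\pi_0(\mathcal{U})} e_{L,V_L},\qquad V_L=k\bigl[\mathrm{Emb}^{\circ}(L,G\times H)/(G\times H)\bigr].
\]
Here $\mathrm{Emb}^{\circ}(L,G\times H)$ is the set of embeddings whose image projects onto both factors. It is taken modulo conjugation in $G\times H$ and made a right $\mathrm{Out}(L)$-set by precomposition.

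You do need that quotient. The identity $\iota\circ c_l=c_{\iota(l)}\circ\iota$ shows that $\mathrm{Inn}(L)$ acts trivially only after conjugation in $G\times H$ is factored out. On raw embeddings there is no $\mathrm{Out}(L)$-action, and conjugate embeddings would be overcounted.

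Evaluated at $K$, the right-hand side is $k$ on the balanced product of $\mathrm{Emb}^{\circ}(L,G\times H)/(G\times H)$ and $\mathrm{Hom}_{\mathcal{U}}(K,L)$ over $\mathrm{Out}(L)$. The map $([\iota],[\gamma])\mapsto[\iota\gamma]$ is a bijection onto $\mathrm{Hom}_{\mathcal{U}}(K,G)\times\mathrm{Hom}_{\mathcal{U}}(K,H)$. The reason is that a homomorphism $K\to G\times H$ factors through its image uniquely up to an automorphism of that image, and the balancing over $\mathrm{Out}(L)$ absorbs exactly this ambiguity. Images outside $\mathcal{U}$ never occur, by wide closure, and only finitely many $L$ contribute. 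Each $V_L$ is finite-dimensional, so Maschke's theorem finishes the argument as you say.
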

\begin{proof} See \cite[Proposition 2.2 and 2.4]{BBP+25b}.
\end{proof}
\begin{df} Let $X\in \mathrm{D}(\mathcal{U})$. The homological support of $X$ is $$\mathrm{hsupp}(X):=\{G\in \pi_{0}(\mathcal{U})~|~X(G)\not\simeq 0\}.$$ Note that $X(G)\not\simeq 0$ if and only if $H_{*}(X)(G)\neq 0$.
\end{df}
\begin{cons}
Given a functor $f:\mathcal{U}\rightarrow \mathcal{V}$, we have adjunctions
\[
\xymatrix@R=0em@C=4em{
\mathsf{A}(\mathcal{U})
\ar@/^3ex/[r]^{f_{!}}        
\ar@/_3ex/[r]_{f_{*}}        
&
\mathsf{A}(\mathcal{V})
\ar[l]^{f^{*}}               
}
\]
where $f^{*}$ is defined by $f^{*}(X)(H)=X(f(H))$, the left adjoint $f_{!}$ of $f^{*}$ is given by the left Kan extension along $f$, and the right adjoint $f_{*}$ of $f^{*}$ is given by the right Kan extension along $f$. Passing to the derived categories, we have the following adjunctions \[
\xymatrix@R=0em@C=4em{
\mathsf{D}(\mathcal{U})
\ar@/^3ex/[r]^{f_{!}}        
\ar@/_3ex/[r]_{f_{*}}        
&
\mathsf{D}(\mathcal{V}).
\ar[l]^{f^{*}}               
}
\]
\end{cons}
\subsection{The category of $\mathrm{FI}$-modules} For more details on the theory of $\mathrm{FI}$-modules, see \cite{CEF15,CEFN14}.
\begin{rec}Let $\mathrm{FI}$ be the category whose objects are the finite sets $$[n]=\{1,2,\cdots,n\}~~~~~\quad\quad (n\geq 0)$$ and whose morphisms are injections.  An $\mathrm{FI}$-module over the field $k$ is a covariant  functor $$X:\mathrm{FI}\rightarrow \mathrm{Mod}~k.$$ We denote by $\mathrm{Mod}_{\mathrm{FI}}$ the  abelian category of $\mathrm{FI}$-modules.
To simplify notation, we write $X(n)$ instead of $X([n])$. An element  $x\in X(n)$ is called torsion if there exists an injection $\alpha:[n]\rightarrow [m]$ such that $\alpha_{*}(x)=0$. We say $x$ is torsion-free if it is not torsion and say $X$ is torsion if it consists of torsion elements. For an $S_{n}$-representation $V$, define $$M_{n,V}:=k[\mathrm{Inj}([n],-)]\otimes_{k[S_{n}]}V.$$ Thus $M_{n,V}(m)=0$ if $m<n$ and $M_{n,V}(n)\cong V$. $M_{n,V}$ is a finitely generated projective object in $\mathrm{Mod}_{\mathrm{FI}}$ when $V$ is finite-dimensional. Given any $\mathrm{FI}$-module $X$, we  have $$\mathrm{Hom}_{\mathrm{FI}}(M_{n,V},X)\cong \mathrm{Hom}_{k[Sn]}(V,X(n)).$$ When $V=k[S_{n}]$, we write $M_{n}$ for $M_{n,k[S_{n}]}$. It is known that $M_{n}$ is a direct summand of $M_{n}\otimes M_{m}$ for $n\geq m$. Note that each finitely generated projective $\mathrm{FI}$-module is a direct summand of a finite direct sum $\oplus_{i=1}^{s}M_{n_{i}}$.

We have the following facts about $\mathrm{Mod}_{\mathrm{FI}}$.
\begin{enumerate}
\item $\mathrm{Mod}_{\mathrm{FI}}$ is a locally Noetherian Grothendieck abelian category with generators given by $M_{n}$ for each $n\in \mathbb{N}$.

\item $\mathrm{Mod}_{\mathrm{FI}}$ is a symmetric monoidal category with pointwise tensor product,i.e.,$(X\otimes Y)(n)=X(n)\otimes Y(n)$ for $X,Y\in \mathrm{Mod}_{\mathrm{FI}}$ and $n\in \mathbb{N}$. $M_{0}$ is the tensor unit $\mathrm{1}$.

\item Each object in $\mathrm{Mod}_{\mathrm{FI}}$ is flat.

\end{enumerate}
\end{rec}
We denote by  $\mathrm{D}_{\mathrm{FI}}$  the derived category of $\mathrm{Mod}_{\mathrm{FI}}$. The following result shows  $\mathrm{D}_{\mathrm{FI}}$ is a compactly generated tensor triangulated category.
\begin{prop} \label{prop2.8} The objects $\{M_{n}~|~n\in \mathbb{N}\}$ form an essentially small class of compact objects that generates $\mathrm{D}_{\mathrm{FI}}$.
\end{prop}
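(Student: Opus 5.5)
We will show three things: each $M_{n}$ is compact in $\mathrm{D}_{\mathrm{FI}}$, the isomorphism classes form a set, and the $M_{n}$ generate $\mathrm{D}_{\mathrm{FI}}$.

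\emph{Compactness.} The $M_{n}$ are finitely generated projective objects of $\mathrm{Mod}_{\mathrm{FI}}$. Projectivity gives, for any complex $X$,
\[
\mathrm{Hom}_{\mathrm{D}_{\mathrm{FI}}}(M_{n},X[i])\cong H^{i}\bigl(\mathrm{Hom}_{\mathrm{FI}}(M_{n},X)\bigr)\cong H^{i}(X(n)),
\]
where the second isomorphism is the Yoneda isomorphism $\mathrm{Hom}_{\mathrm{FI}}(M_{n},X)\cong X(n)$ applied levelwise. To justify the first isomorphism for unbounded $X$, we use that a bounded complex of projectives is K-projective, so maps from $M_{n}$ in the derived category agree with homotopy classes of chain maps. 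Coproducts in $\mathrm{D}_{\mathrm{FI}}$ are computed degreewise. Evaluation at $[n]$ commutes with direct sums, and so does taking homology. Hence $\mathrm{Hom}(M_{n},-)$ commutes with arbitrary coproducts, and $M_{n}$ is compact. Essential smallness is immediate, since the class is indexed by $\mathbb{N}$.

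\emph{Generation.} Suppose $X\in\mathrm{D}_{\mathrm{FI}}$ satisfies $\mathrm{Hom}(M_{n},X[i])=0$ for all $n$ and $i$. By the computation above, $H^{i}(X)(n)=H^{i}(X(n))=0$ for every $n$ and $i$. Homology in a functor category is computed pointwise, so $H^{i}(X)=0$ for all $i$, and therefore $X\simeq 0$. For compactly generated triangulated categories, vanishing of the right orthogonal is equivalent to $\mathrm{Loc}\langle M_{n}\rangle=\mathrm{D}_{\mathrm{FI}}$, by Neeman's theorem.

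\emph{Main obstacle.} The only genuinely non-formal step is the identification $\mathrm{Hom}_{\mathrm{D}}(M_{n},X[i])\cong H^{i}(X(n))$ for unbounded complexes. We will handle it through the K-projectivity of $M_{n}$ concentrated in degree zero. Alternatively, we can note that evaluation at $[n]$ is an exact functor $\mathrm{Mod}_{\mathrm{FI}}\to\mathrm{Mod}\,k[S_{n}]$ with exact left adjoint $V\mapsto M_{n,V}$. This adjunction therefore descends to derived categories without deriving, exactly as in the construction for $e_{G,\bullet}$ above. The result is
\[
\mathrm{Hom}_{\mathrm{D}_{\mathrm{FI}}}(M_{n},X[i])\cong \mathrm{Hom}_{\mathrm{D}(k[S_{n}])}(k[S_{n}],X(n)[i])=H^{i}(X(n)).
\]
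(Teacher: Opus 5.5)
Your proposal is correct and follows the paper's proof. Projectivity of $M_{n}$ gives $\mathrm{Hom}_{\mathrm{D}_{\mathrm{FI}}}(\Sigma^{i}M_{n},X)\cong H_{i}(X(n))$. Compactness follows because evaluation and homology commute with coproducts, and generation follows because vanishing of all $H_{i}(X(n))$ forces $X\cong 0$. You add a justification of that isomorphism for unbounded complexes, via K-projectivity or the exact adjunction $M_{n,-}\dashv ev_{n}$, which the paper leaves implicit.
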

\begin{proof}Since $M_{n}$ is projective for each $n\in \mathbb{N}$, we have a natural isomorphism $\mathrm{Hom}_{\mathrm{D}_{\mathrm{FI}}}(\Sigma^{i}M_{n},X)\cong H_{i}(X(n))$ for each $i\in \mathbb{Z}$ and $X\in \mathrm{D}_{\mathrm{FI}}$. Thus $M_{n}$ is compact for each $n\in \mathbb{N}$. If  $\mathrm{Hom}_{\mathrm{D}_{\mathrm{FI}}}(\Sigma^{i}M_{n},X)\cong H_{i}(X(n))=0$ for each $i\in \mathbb{Z}, n\in \mathbb{N}$, then we have $X\cong 0$, which implies that $\{M_{n}~|~n\in \mathbb{N}\}$ generates $\mathrm{D}_{\mathrm{FI}}$.
\end{proof}
\begin{df} An object $X\in \mathrm{D}_{\mathrm{FI}}$ is called perfect if it is a bounded complex of finitely generated projective $\mathrm{FI}$-modules. We write $\mathrm{D}_{\mathrm{FI}}^{c}$ for the subcategory of perfect complexes.
\end{df}
\begin{prop} For $X\in \mathrm{D}_{\mathrm{FI}}$, the following are equivalent:
\begin{enumerate}
\item $X$ is isomorphic to a  perfect complex in $\mathrm{D}_{\mathrm{FI}}$;

\item $X$ is a compact object in $\mathrm{D}_{\mathrm{FI}}$;

\item $X$ lies in the thick subcategory of $\mathrm{D}_{\mathrm{FI}}$ generated by $\{M_{n}~|~n\in \mathbb{N}\}$.

\end{enumerate}
\end{prop}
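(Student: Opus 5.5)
The plan is to prove the cycle of implications $(1)\Rightarrow(3)\Rightarrow(2)\Rightarrow(1)$. This is the standard characterization of compact objects in a derived category generated by a set of compact projective objects (Neeman's theorem). Here we only need to check that the ingredients are available for $\mathrm{Mod}_{\mathrm{FI}}$.

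For $(1)\Rightarrow(3)$, let $X$ be a bounded complex of finitely generated projective $\mathrm{FI}$-modules. We induct on the number of nonzero terms, using the brutal truncation triangles $\sigma_{\geq i}X\to X\to \sigma_{<i}X\to$. This reduces us to the case of a single finitely generated projective module concentrated in one degree. Such a module is a direct summand of a finite sum $\oplus_{i=1}^{s}M_{n_i}$, by the recollection, so it lies in $\mathrm{thick}(M_n\mid n\in\mathbb{N})$. Shifts, cones and summands stay in the thick subcategory, so $X$ does too.

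For $(3)\Rightarrow(2)$, each $M_n$ is compact by Proposition \ref{prop2.8}. The compact objects form a thick subcategory, being closed under shifts, cones (five lemma applied to $\mathrm{Hom}(-,\oplus)$) and summands. Hence the thick closure of the $M_n$ consists of compact objects.

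The implication $(2)\Rightarrow(1)$ is the main step. I would invoke Neeman's theorem (the Thomason–Neeman localization theorem): if $\mathcal{T}$ is compactly generated by a set $\mathcal{S}$ of compact objects, then $\mathcal{T}^{c}=\mathrm{thick}(\mathcal{S})$. By Proposition \ref{prop2.8}, $\mathrm{D}_{\mathrm{FI}}$ is compactly generated by $\{M_n\}$, so every compact $X$ lies in $\mathrm{thick}(M_n\mid n\in\mathbb N)$. This gives $(2)\Rightarrow(3)$.

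It then remains to show $(3)\Rightarrow(1)$: the objects isomorphic to perfect complexes form a thick subcategory containing each $M_n$. Closure under shifts is clear. For closure under cones, take a morphism between bounded complexes of projectives in $\mathrm{D}_{\mathrm{FI}}$. It is represented by an honest chain map, since bounded complexes of projectives are K-projective, so $\mathrm{Hom}_{\mathrm{D}}=\mathrm{Hom}_{\mathrm{K}}$. Its mapping cone is then again a bounded complex of finitely generated projectives.

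The genuine obstacle is closure under direct summands. If $P$ is perfect and $P\simeq X\oplus Y$, I would argue that $X$ is the image of an idempotent $e$ on $P$, realized as a chain map that is idempotent up to homotopy. One standard route is to note that the homotopy category of bounded complexes of finitely generated projectives is idempotent complete. Alternatively, following Thomason, every summand of $P$ becomes perfect after adding $\Sigma X$, via the Eilenberg swindle $X\oplus\Sigma X\simeq \mathrm{cone}(e)$-type argument, and vanishing of the relevant $K_0$ obstruction. A cleaner approach avoids $K_0$: take the projective resolution of the summand $X$, which has finitely generated projective terms because $\mathrm{Mod}_{\mathrm{FI}}$ is locally noetherian and $H_*(X)$ is finitely generated and bounded. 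Then use that $X$ has finite projective dimension, as a summand of $P$, to truncate the resolution to a bounded one, replacing the last term by a kernel which is projective since $\mathrm{Ext}^{>d}(X,-)=0$. I would follow this resolution argument, checking finite generation of the homology of the summand and the Ext-vanishing via compactness.
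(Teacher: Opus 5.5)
Your proposal is correct and follows essentially the same route as the paper: (2)$\Leftrightarrow$(3) via Neeman's theorem together with Proposition~\ref{prop2.8}, and (1)$\Leftrightarrow$(3) via the fact that finitely generated projective $\mathrm{FI}$-modules are summands of finite sums of the $M_n$. The one place you go further is (3)$\Rightarrow$(1), which needs perfect complexes to be closed under direct summands. The paper's one-line argument passes over this, and your argument fills it correctly: resolve the summand by finitely generated projectives using local noetherianity, then truncate using the uniform Ext-vanishing inherited from the perfect complex $P$.
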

\begin{proof} Note that each finitely generated projective $\mathrm{FI}$-module is a direct summand of a finite direct sum $\oplus_{i=1}^{s}M_{n_{i}}$, thus $(1)$ and $(3)$ are equivalent. $(2)$ and $(3)$ are equivalent by \cite[Lemma 2.2]{BKS} and Proposition \ref{prop2.8}.
\end{proof}

\subsection{The telescope conjecture for compactly generated tensor triangulated categories} We assume some familiarity with the basic notions of tensor triangular geometry, as developed by Balmer in \cite{Bal05}.
\begin{rec} Let $\mathcal{J}$ be a compactly generated triangulated category with compact part $\mathcal{J}^{c}$. A thick subcategory $\mathcal{S}$ is called localizing if it is closed under coproducts.  A localizing subcategory $\mathcal{S}$ is called a Bousfield subcategory if the Verdier quotient $q:\mathcal{J}\rightarrow \mathcal{J}/\mathcal{S}$ has a right adjoint. Moreover, if $\mathcal{S}^{\bot}$ is localizing, $\mathcal{S}$ is called a smashing subcategory. Here $\mathcal{S}^{\bot}:=\{Y\in \mathcal{J}~|~\mathrm{Hom}_{\mathcal{J}}(X,Y)=0, \forall X\in \mathcal{S}\}$.\\

A compactly generated tensor triangulated category $(\mathcal{J},\otimes,1)$ is a compactly generated triangulated category $\mathcal{J}$ with a tensor product $\otimes:\mathcal{J}\times \mathcal{J}\rightarrow \mathcal{J}$ which is a triangulated functor in each variable and also makes $\mathcal{J}$ symmetric monoidal with unit $1$. We require $\otimes$ to preserve coproducts. A subcategory $\mathcal{S}$ is called a tensor ideal if $X\otimes Y\in \mathcal{S}$ for $X\in \mathcal{S}$, $Y\in \mathcal{J}$. A localizing subcategory is called a localizing  ideal if it is a tensor ideal. Similarly, a smashing subcategory is called a smashing  ideal if it is a tensor ideal. Let $\mathcal{C}$ be a collection of objects in $\mathcal{J}$. We denote by $\mathrm{Loc}\langle\mathcal{C}\rangle$ the localizing subcategory generated by $\mathcal{C}$, and by $\mathrm{Loc}_{\otimes}\langle\mathcal{C}\rangle$ the localizing ideal generated by $\mathcal{C}$. If $\mathcal{C}$ is a collection of compact objects, we denote by $\mathrm{thick}\langle\mathcal{C}\rangle$ the thick subcategory generated by $\mathcal{C}$, and by $\mathrm{thick}_{\otimes}\langle\mathcal{C}\rangle$ the thick ideal generated by $\mathcal{C}$.
\end{rec}

\begin{prop}\label{Prop2.12}Let $\mathcal{J}$ be a compactly generated tensor triangulated category such that $\mathcal{J}^{c}$ is a symmetric monoidal subcategory of $\mathcal{J}$. If $\mathcal{C}$ is a collection of compact objects, then we have the following:
\begin{enumerate}
\item $\mathrm{Loc}\langle\mathcal{C}\rangle$ is a smashing subcategory of $\mathcal{J}$;

\item $\mathrm{Loc}_{\otimes}\langle\mathcal{C}\rangle$ is a smashing ideal of $\mathcal{J}$.

\end{enumerate}

\end{prop}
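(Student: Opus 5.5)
For part (1) I will use the standard theory of compactly generated localizing subcategories due to Neeman and Krause. Put $\mathcal{S}=\mathrm{Loc}\langle\mathcal{C}\rangle$.

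First, $\mathcal{S}$ is generated by a set of compact objects of $\mathcal{J}$. By Neeman's version of Brown representability, the inclusion $\mathcal{S}\hookrightarrow\mathcal{J}$ has a right adjoint $\Gamma$. Hence every $X$ sits in a localization triangle $\Gamma X\to X\to LX\to$ with $\Gamma X\in\mathcal{S}$ and $LX\in\mathcal{S}^{\bot}$. It follows that the Verdier quotient $\mathcal{J}\to\mathcal{J}/\mathcal{S}$ has a right adjoint, identifying $\mathcal{J}/\mathcal{S}\simeq\mathcal{S}^{\bot}$, so $\mathcal{S}$ is Bousfield.

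Next, $\mathcal{S}^{\bot}$ is localizing. Since $\mathcal{S}$ is localizing and generated by $\mathcal{C}$, we have
\[
\mathcal{S}^{\bot}=\{Y \mid \mathrm{Hom}(\Sigma^{i}C,Y)=0 \text{ for all } C\in\mathcal{C},\ i\in\mathbb{Z}\}.
\]
Each $C$ is compact, so $\mathrm{Hom}(\Sigma^{i}C,\coprod Y_j)\cong\bigoplus\mathrm{Hom}(\Sigma^{i}C,Y_j)$. Therefore $\mathcal{S}^{\bot}$ is closed under coproducts. It is clearly triangulated and thick, so $\mathcal{S}$ is smashing in the sense of the Recollection.

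For part (2), I reduce to part (1). Let $\mathcal{C}'=\{C\otimes D \mid C\in\mathcal{C},\ D\in\mathcal{J}^{c}\}$, which consists of compact objects because $\mathcal{J}^{c}$ is closed under $\otimes$. This is exactly where the hypothesis that $\mathcal{J}^{c}$ is a symmetric monoidal subcategory is used. The plan is to show $\mathrm{Loc}_{\otimes}\langle\mathcal{C}\rangle=\mathrm{Loc}\langle\mathcal{C}'\rangle$. Part (1) then shows this category is smashing, and it is a tensor ideal, hence a smashing ideal.

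The inclusion $\supseteq$ is clear. For $\subseteq$, first I will show $\mathrm{Loc}\langle\mathcal{C}'\rangle$ is a tensor ideal.
\begin{itemize}
\item[(a)] Fix $C\in\mathcal{C}$. The subcategory $\{Y \mid C\otimes Y\in\mathrm{Loc}\langle\mathcal{C}'\rangle\}$ is localizing, because $\otimes$ is exact and coproduct-preserving in each variable. It contains $\mathcal{J}^{c}$, which generates $\mathcal{J}$, so it is all of $\mathcal{J}$.
\item[(b)] Fix $Y\in\mathcal{J}$. The subcategory $\{X \mid X\otimes Y\in\mathrm{Loc}\langle\mathcal{C}'\rangle\}$ is localizing and, by (a), contains $\mathcal{C}$. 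Hence it contains $\mathrm{Loc}\langle\mathcal{C}\rangle$.
\item[(c)] Apply the same argument again, now with generators $C\otimes D$ in place of $C$: for fixed $Y$, the subcategory $\{X \mid X\otimes Y\in\mathrm{Loc}\langle\mathcal{C}'\rangle\}$ contains every $C\otimes D$. Indeed $(C\otimes D)\otimes Y\cong C\otimes(D\otimes Y)$, which lies in $\mathrm{Loc}\langle\mathcal{C}'\rangle$ by (a). So it contains $\mathrm{Loc}\langle\mathcal{C}'\rangle$.
\end{itemize}
Thus $\mathrm{Loc}\langle\mathcal{C}'\rangle$ is a localizing ideal containing $\mathcal{C}$, since $C\cong C\otimes 1$ and the unit $1$ is assumed compact. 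This gives equality with $\mathrm{Loc}_{\otimes}\langle\mathcal{C}\rangle$.

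The main obstacle is this "ideal" verification. It relies on the two-variable bootstrapping above, on the tensor preserving coproducts and triangles in each variable, and on the unit lying in $\mathcal{J}^{c}$. No rigidity is needed, which matters because $\mathrm{D}(\mathcal{U})$ is not rigidly-compactly generated. If the unit were not assumed compact, I would instead take $\mathcal{C}'=\mathcal{C}\cup\{C\otimes D\}$, which changes nothing in the argument.
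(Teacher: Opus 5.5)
Your proof is correct and follows essentially the same route as the paper. For part (1), the paper just cites the standard fact that a localizing subcategory generated by compact objects is smashing. For part (2), it deduces the result from part (1) via the equality $\mathrm{Loc}_{\otimes}\langle\mathcal{C}\rangle=\mathrm{Loc}\langle\mathcal{C},\mathcal{C}\otimes\mathcal{J}^{c}\rangle$; your two-variable bootstrapping argument supplies exactly the verification of this equality that the paper leaves implicit.
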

\begin{proof}Part (1) is standard, see, for example, \cite[Theorem 1.7]{HR17}. Then (2) comes from the equality $\mathrm{Loc}_{\otimes}\langle\mathcal{C}\rangle=\mathrm{Loc}\langle\mathcal{C},\mathcal{C}\otimes \mathcal{J}^{c}\rangle$ and part (1).
\end{proof}
\begin{rem} Under the assumptions of Proposition \ref{Prop2.12}, a smashing ideal is compactly generated as a localizing ideal if and only if it is compactly generated as a localizing subcategory.
\end{rem}

\begin{df} Let $\mathcal{J}$ be a compactly generated tensor triangulated category. We say that $\mathcal{J}$ satisfies the telescope conjecture if every smashing ideal of $\mathcal{J}$ is compactly generated.
\end{df}

\begin{rem}Our definition of the telescope conjecture coincides with \cite[Definition 9.1]{BHS23} when $\mathcal{J}$ is  a rigidly-compactly generated tensor triangulated category in the sense of \cite{BF11}. In a rigidly-compactly generated tensor triangulated category $\mathcal{J}$, there is a bijection between smashing ideals and tensor-idempotent triangles, which is crucial for developing the stratification theory (see, for example, \cite{BHS23,BIK11}). The main consequence of a stratified category $\mathcal{J}$ (under some mild conditions) is that it satisfies the telescope conjecture. However,  for a general compactly generated tensor triangulated category, there may exist more smashing ideals than tensor-idempotent triangles, see \cite[Example 2.8]{S26}.
\end{rem}

 \section{\bf Global representations of essentially finite families and $\mathfrak{C}^{\mathrm{pr}}$}
The first goal of this section is to classify  the localizing ideals of $\mathrm{D}(\mathcal{U})$ for essentially finite collections $\mathcal{U}$. We show that when $\mathcal{U}$ is essentially finite, there is a bijection between the localizing ideals of $\mathrm{D}(\mathcal{U})$ and the subsets of $\pi_{0}(\mathcal{U})$. Then we classify the localizing ideals of $\mathrm{D}(\mathfrak{C}^{\mathrm{pr}})$, where $\mathfrak{C}^{\mathrm{pr}}$ denotes the family of cyclic groups of prime order together with the trivial group. We prove that the telescope conjecture holds in both cases.\\

Recall that $k$ is always assumed to be a field of characteristic 0.
\begin{lem}\label{lem3.1} Let $\mathcal{U}\subseteq \mathcal{G}$ be a subcategory and let $G\in \mathcal{U}$. Then for any nonzero $W\in \mathrm{D}(\{G\})$, we have $$\mathrm{Loc}_{\otimes}\langle \chi_{G,W}\rangle=\mathrm{Loc}_{\otimes}\langle \chi_{G,k}\rangle \subseteq \mathrm{D}(\mathcal{U}).$$
\end{lem}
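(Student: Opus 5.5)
The plan is to prove both inclusions by working in $\mathrm{D}(\{G\})\simeq \mathrm{D}(k[\mathrm{Out}(G)])$. Since $k$ has characteristic zero and $\mathrm{Out}(G)$ is finite, $k[\mathrm{Out}(G)]$ is semisimple. The key is to understand how $\chi_{G,V}$ depends on $V$.

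First, I record the structure of $\chi_{G,V}$. By the construction $\chi_{G,V}=e_{G,V}\otimes\chi_{G,k}$, this object is supported only at $G$, where its value is $e_{G,V}(G)\otimes \chi_{G,k}(G)$. Here $e_{G,V}(G)\cong V\otimes_{k[\mathrm{Out}(G)]}k[\mathrm{Out}(G)]\cong V$, because $\mathrm{Hom}_{\mathcal U}(G,G)=\mathrm{Out}(G)$. Also $\chi_{G,k}(G)=e_{G,k}(G)\cong k$ with trivial action. So $\chi_{G,V}$ is the object concentrated at $G$ with value $V$.

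Second, I prove $\mathrm{Loc}_{\otimes}\langle\chi_{G,W}\rangle\subseteq\mathrm{Loc}_{\otimes}\langle\chi_{G,k}\rangle$. This is immediate, since $\chi_{G,W}=e_{G,W}\otimes\chi_{G,k}$ is a tensor multiple of $\chi_{G,k}$.

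Third, I prove the reverse inclusion, that $\chi_{G,k}\in\mathrm{Loc}_{\otimes}\langle\chi_{G,W}\rangle$. Since $W\neq 0$, some homology $H_i(W)$ is a nonzero $\mathrm{Out}(G)$-module. Because $k[\mathrm{Out}(G)]$ is semisimple, $W$ is quasi-isomorphic to $\bigoplus_i \Sigma^i H_i(W)$, so some shift of a nonzero module $M$ is a direct summand of $W$. The functor $V\mapsto\chi_{G,V}$ is triangulated and preserves sums and summands, so $\chi_{G,M}\in\mathrm{Loc}_{\otimes}\langle\chi_{G,W}\rangle$.

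Next I want to reach the trivial representation from $M$. I tensor $\chi_{G,M}$ with $e_{G,M^{*}}$. Pointwise, $\chi_{G,M}\otimes e_{G,M^*}$ is concentrated at $G$ with value $M\otimes M^*$, with the diagonal action twisted appropriately. Here I have to check that the diagonal $\mathrm{Out}(G)$-action on $e_{G,M^*}(G)\cong M^*$ agrees with the action relevant in $\chi$. This holds because the pointwise tensor product at $G$ uses the functorial action of $\mathrm{Out}(G)$ on both factors. Hence $\chi_{G,M}\otimes e_{G,M^*}\cong\chi_{G,M\otimes M^*}$. The trivial module $k$ is a direct summand of $M\otimes M^*\cong\mathrm{End}_k(M)$, split by the identity and the trace map, which is legitimate since $\mathrm{char}\,k=0$. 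Therefore $\chi_{G,k}$ is a summand of an object of the ideal, and the inclusion follows.

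The main obstacle is making precise that tensoring an object concentrated at $G$ with a value-$V$ object realizes the diagonal $\mathrm{Out}(G)$-tensor product. In other words, I need $\chi_{G,V}\otimes X\cong\chi_{G,V\otimes X(G)}$ for any $X$. I would establish this directly from the pointwise tensor product, using that the only morphisms $H\to G$ with $H\cong G$ are isomorphisms. The remaining steps are formal once this is in place.
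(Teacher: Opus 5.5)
Your argument is correct and follows the paper's route. Like the paper, you use semisimplicity of $k[\mathrm{Out}(G)]$ to split off a nonzero homology module $M$ of $W$, use that $k$ is a summand of $M\otimes M^{*}$, and transport along the exact, coproduct-preserving functor $\chi_{G,-}$. The only packaging difference is that you perform the tensor step in $\mathrm{D}(\mathcal{U})$ by tensoring with $e_{G,M^{*}}$, whereas the paper does it inside $\mathrm{D}(\{G\})$ and then applies $\chi_{G,-}$.

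One small repair is needed. Since $W$ is an arbitrary object, $M$ may be infinite-dimensional. In that case $M\otimes M^{*}\not\cong\mathrm{End}_k(M)$ and the identity/trace splitting is unavailable. Before that step, replace $M$ by a simple, hence finite-dimensional, summand $S$; this exists by semisimplicity, and $\chi_{G,S}$ is then a summand of $\chi_{G,M}$.
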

\begin{proof} Since $k[\mathrm{Out}(G)]$ is semisimple and $\mathrm{D}(\{G\})\cong \mathrm{D}(k[\mathrm{Out}(G)])$, we have $W\cong \bigoplus_{n\in \mathbb{Z}}H_{n}(W)[-n]$. Fix some $n$ such that $V:=H_{n}(W)\neq0$. Then $k$ is a direct summand of $V\otimes V^{*}$. This implies that $k\in \mathrm{Loc}_{\otimes}\langle W\rangle$. For the inclusion $\mathrm{Loc}_{\otimes}\langle \chi_{G,W}\rangle \supseteq \mathrm{Loc}_{\otimes}\langle \chi_{G,k}\rangle $, note that the tensor exact functor $\chi_{G,-}:\mathrm{D}(\{G\})\rightarrow \mathrm{D}(U)$ preserves coproducts. Therefore $$\chi_{G,k}\in \chi_{G,-}(\mathrm{Loc}_{\otimes}\langle W\rangle)\subseteq \mathrm{Loc}_{\otimes}\langle \chi_{G,W}\rangle.$$
The other containment follows from the identity $\chi_{G,W}\cong \chi_{G,k}\otimes\chi_{G,W}$.
\end{proof}

The following result is a localizing analogue of \cite[Proposition 5.2]{BBP+25b}. We provide the proof for convenience.
\begin{prop}\label{prop3.2} Let $\mathcal{U}$ be essentially finite and $X\in \mathrm{D}(\mathcal{U})$. Then $$\mathrm{Loc}_{\otimes}\langle X\rangle=\mathrm{Loc}_{\otimes}\langle \chi_{G,k}~|~G\in \mathrm{hsupp}(X)\rangle.$$ In particular, for $Y\in \mathrm{D}(\mathcal{U})$, we have $\mathrm{hsupp}(X)\subseteq \mathrm{hsupp}(Y)$  if and only if $X\in \mathrm{Loc}_{\otimes}\langle Y\rangle$.
\end{prop}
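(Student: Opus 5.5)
We prove the two inclusions separately, then read off the final assertion.

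For $\supseteq$: fix $G\in\mathrm{hsupp}(X)$. Observe that $X\otimes\chi_{G,k}$ is concentrated at $G$, since tensor product is pointwise. Concretely, $(X\otimes\chi_{G,k})(H)=0$ for $H\not\cong G$, and $(X\otimes\chi_{G,k})(G)\cong X(G)\otimes k[\mathrm{Out}(G)]$. We want to identify $X\otimes\chi_{G,k}$ with $\chi_{G,W}$ for some nonzero $W\in\mathrm{D}(\{G\})$, presumably $W=X(G)$ with its $\mathrm{Out}(G)$-action. To justify this, note that the full subcategory of $\mathrm{A}(\mathcal{U})$ of functors supported only at $G$ is equivalent to $\mathrm{Mod}\,k[\mathrm{Out}(G)]$ via $\chi_{G,-}$ and $ev_G$; there are no nonidentity maps in the relevant directions to worry about, because the restriction maps to other groups vanish. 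This equivalence passes to complexes. It also uses $\chi_{G,V}\cong e_{G,V}\otimes\chi_{G,k}$ and the description $\chi_{G,V}(G)=e_{G,V}(G)$. Hence $X\otimes\chi_{G,k}\cong\chi_{G,X(G)}$ up to the harmless identification, with $X(G)\not\simeq 0$. Lemma \ref{lem3.1} then gives $\chi_{G,k}\in\mathrm{Loc}_{\otimes}\langle\chi_{G,X(G)}\rangle\subseteq\mathrm{Loc}_{\otimes}\langle X\rangle$.

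For $\subseteq$: we build $X$ from its restrictions to single groups by a finite filtration, using that $\mathcal{U}$ is essentially finite. Order $\pi_0(\mathcal{U})$ by group order, $G_1,\dots,G_m$, with $|G_i|$ nondecreasing. Let $\mathcal{U}_{\le i}$ be spanned by $G_1,\dots,G_i$. The subfunctor of $X$ supported on the groups with $|G|>n$ is a subobject, since maps go from larger to smaller groups contravariantly. Equivalently, it is $X\otimes\mathbf{1}_{>n}$, where $\mathbf{1}_{>n}\subseteq\mathbf{1}$ is the subfunctor of the unit supported on $\mathcal{U}_{>n}$. This gives short exact sequences in $\mathrm{A}(\mathcal{U})$,
\[0\to\mathbf{1}_{>n}\to\mathbf{1}_{\ge n}\to\mathbf{1}_{=n}\to0,\]
where $\mathbf{1}_{=n}=\bigoplus_{|G|=n}\chi_{G,k}$. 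Tensoring with $X$ is exact because all objects are flat. The resulting triangles show that $X=X\otimes\mathbf{1}$ lies in the localizing (indeed thick) subcategory generated by the objects $X\otimes\chi_{G,k}\cong\chi_{G,X(G)}$. Those with $G\notin\mathrm{hsupp}(X)$ vanish. Each remaining one lies in $\mathrm{Loc}_{\otimes}\langle\chi_{G,k}\rangle$ by Lemma \ref{lem3.1}, or directly because $\chi_{G,X(G)}\cong e_{G,X(G)}\otimes\chi_{G,k}$.

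Finally, if $\mathrm{hsupp}(X)\subseteq\mathrm{hsupp}(Y)$, the main equality applied to both objects gives $X\in\mathrm{Loc}_{\otimes}\langle Y\rangle$. Conversely, $\{Z:\mathrm{hsupp}(Z)\subseteq\mathrm{hsupp}(Y)\}$ is a localizing ideal, because evaluation is exact, commutes with coproducts, and $(Z\otimes W)(G)=Z(G)\otimes W(G)$. It contains $Y$, hence contains $X$.

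The main obstacle is the identification $X\otimes\chi_{G,k}\simeq\chi_{G,X(G)}$ in the derived category, including the $\mathrm{Out}(G)$-equivariance. The plan is to prove it at the chain level using the equivalence between $G$-concentrated functors and $k[\mathrm{Out}(G)]$-modules.
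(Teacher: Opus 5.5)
Your proof is correct. For $\supseteq$ it is the paper's argument: identify $X\otimes\chi_{G,k}$ with $\chi_{G,X(G)}$, then apply Lemma \ref{lem3.1}. Two small remarks on that direction. First, $\chi_{G,k}(G)=e_{G,k}(G)=k$ with trivial action, so $(X\otimes\chi_{G,k})(G)\cong X(G)$, not $X(G)\otimes k[\mathrm{Out}(G)]$. This is harmless, since only non-vanishing is used. Second, the equivariance you flag as the main obstacle is automatic if you use, as the paper does, the map $\chi_{G,k}\otimes e_{G,X(G)}\to\chi_{G,k}\otimes X$ induced by the counit of $e_{G,\bullet}\dashv ev_{G}$. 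That is a map of functors, the identity at $G$ and zero elsewhere.

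For $\subseteq$ your route genuinely differs. The paper inducts on $|\mathrm{hsupp}(X)|$. It chooses $G_n$ maximal for $\twoheadrightarrow$ and takes the triangle $F\to e_{G_n,X(G_n)}\to\chi_{G_n,X(G_n)}$, with $F$ built from the $e_H$ for $H$ strictly above $G_n$ (\cite[Lemma 2.16]{BBP+25b}). Maximality gives $\mathrm{Hom}(F,X)=0$, so the map lifts to $\chi_{G_n,X(G_n)}\to X$, an isomorphism at $G_n$, and the paper recurses on its cone. You instead tensor $X$ with a finite filtration of the unit by up-closed subfunctors whose subquotients are finite sums of $\chi_{G,k}$. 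Flatness turns this into finitely many triangles placing $X$ in $\mathrm{thick}\langle X\otimes\chi_{G,k}\mid G\in\mathrm{hsupp}(X)\rangle$. Here you do not even need the identification with $\chi_{G,X(G)}$: $X\otimes\chi_{G,k}\in\mathrm{Loc}_{\otimes}\langle\chi_{G,k}\rangle$ by the ideal property, and it is pointwise acyclic when $G\notin\mathrm{hsupp}(X)$.

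Your version is self-contained, with no external lemma and no Hom-vanishing or lifting step. It is essentially an iterated form of the triangles $(j_n)_!(j_n)^*X\to X\to(i_n)_*(i_n)^*X$ and of the trick $X\cong X\otimes r_{\mathcal{L}}$ that the paper itself uses later. The paper's version follows the template of \cite[Proposition 5.2]{BBP+25b} and produces explicit comparison maps $\chi_{G_n,X(G_n)}\to X$ rather than a filtration. Finally, your direct check that $\{Z\mid\mathrm{hsupp}(Z)\subseteq\mathrm{hsupp}(Y)\}$ is a localizing ideal simply replaces the paper's citation of \cite[Lemma 4.8]{BBP+25b}.
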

\begin{proof} If $G\in \mathrm{hsupp}(X)$, there is a quasi-isomorphism $$\chi_{G,X(G)}:=\chi_{G,k}\otimes e_{G,X(G)}\xrightarrow{\simeq} \chi_{G,k}\otimes X .$$ Therefore, by Lemma \ref{lem3.1}, we have $$\chi_{G,k}\in \mathrm{Loc}_{\otimes}\langle \chi_{G,X(G)}\rangle\in \mathrm{Loc}_{\otimes}\langle X\rangle,$$ so the $"\supseteq"$ containment holds.

For the other containment, using that $\pi_{0}(\mathcal{U})$ is finite, we prove by induction on $n=|~\mathrm{hsupp}(X)|$ that $X\in \mathrm{Loc}_{\otimes}\langle \chi_{G,k}~|~G\in \mathrm{hsupp}(X)\rangle$. If $n=0$ then $X\simeq 0$ and the claim is clear. Now let $n\geq 1$ and suppose that the claim holds for all objects $Z\in \mathrm{D}(\mathcal{U})$ with $|~\mathrm{hsupp}(Z)|< n$. Let $\{G_{1},\cdots, G_{n}\}$ be a complete collection of representatives for the isomorphism classes of groups in $\mathrm{hsupp}(X)$ and choose $G_{n}$ maximal among these groups with respect to $\twoheadrightarrow$. Set $V_{n}=X(G_{n})$. Consider the following diagram $$\xymatrix@C=0.5cm{
  F \ar[r]\ar[d] & e_{G_{n},X(G_{n})} \ar[r]\ar[d]^{g_{n}} & \chi_{G_{n},X(G_{n})} \ar@{-->}[d]^{f_{n}} \\
  0 \ar[r] & X \ar[r]^{\cong} & X  }$$
  where $F\in \mathrm{Loc}\langle e_{H}~| H\in \uparrow (\{G_{n}\})\setminus\{G_{n}\} \rangle$ by \cite[Lemma 2.16]{BBP+25b}, the upper row is a triangle and $g_{n}$ is the map induced by the identity map of $X(G_{n})$. By construction we have $\mathrm{hsupp}(X)\cap \mathrm{hsupp}(F)=\emptyset$, thus the left square is commutative and so there exists $f_{n}: \chi_{G_{n},X(G_{n})}\rightarrow X$ to make the right square commutative. Note that the map $f_{n}$ is a quasi-isomorphism at $G_{n}$ and so $\mathrm{hsupp}(C)=\mathrm{hsupp}(X)\setminus\{G_{n}\}$. Now let $C=\mathrm{cone}(f_{n})$, we have $X\in \mathrm{Loc}_{\otimes}\langle \chi_{G_{n},V_{n}}, C\rangle$. By the induction hypothesis $$X\in \mathrm{Loc}_{\otimes}\langle \chi_{G_{n},V_{n}}, C\rangle\subseteq \mathrm{Loc}_{\otimes}\langle \chi_{G_{n},V_{n}}, \chi_{G_{i},k}~|~1\leq i<n\rangle.$$ To conclude the proof, we apply Lemma \ref{lem3.1} again.
The final statement is an immediate consequence of the first claim and \cite[Lemma 4.8]{BBP+25b}.
\end{proof}

\begin{df} Let $\mathcal{U}\subseteq \mathcal{G}$ be a subcategory. If $\mathcal{L}\subseteq \mathrm{D}(\mathcal{U})$ is a localizing ideal, we define $$S(\mathcal{L}):=\{[G]\in \pi_{0}(\mathcal{U})\mid\chi_{G,k}\in \mathcal{L}\}.$$ We say $\mathcal{L}$ is torsion if $\mathcal{L}=\mathrm{Loc}_{\otimes}\langle \chi_{G,k}~|~G\in S(\mathcal{L})\rangle$.
\end{df}
\begin{thm}\label{thm3.4}Let $\mathcal{U}$ be essentially finite and satisfy Hypothesis \ref{hyp2.3}. Then every localizing ideal is torsion and the homological support induces  bijections \[
\{\text{localizing ideals of } \mathrm{D}(\mathcal{U})\}
\simeq
\{\text{subsets of } \pi_{0}(\mathcal{U})\}
\simeq
\{\text{subsets of } \mathrm{Spc}(\mathrm{D}(\mathcal{U})^{c})\}.
\]
\end{thm}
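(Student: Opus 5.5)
The plan is to deduce everything from Proposition \ref{prop3.2}, together with a known description of the Balmer spectrum of $\mathrm{D}(\mathcal{U})^{c}$ for essentially finite $\mathcal{U}$.

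First, every localizing ideal is torsion. Let $\mathcal{L}$ be a localizing ideal and put $S=S(\mathcal{L})$. The inclusion $\mathrm{Loc}_{\otimes}\langle \chi_{G,k}\mid G\in S\rangle\subseteq\mathcal{L}$ holds by the definition of $S(\mathcal{L})$. Conversely, take $X\in\mathcal{L}$. By Proposition \ref{prop3.2}, $\chi_{G,k}\in\mathrm{Loc}_{\otimes}\langle X\rangle\subseteq\mathcal{L}$ for every $G\in\mathrm{hsupp}(X)$, so $\mathrm{hsupp}(X)\subseteq S$. Proposition \ref{prop3.2} also gives
\[X\in \mathrm{Loc}_{\otimes}\langle \chi_{G,k}\mid G\in \mathrm{hsupp}(X)\rangle\subseteq \mathrm{Loc}_{\otimes}\langle \chi_{G,k}\mid G\in S\rangle.\]
Hence $\mathcal{L}$ is torsion. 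We also get $S(\mathcal{L})=\bigcup_{X\in\mathcal{L}}\mathrm{hsupp}(X)$, using $\mathrm{hsupp}(\chi_{G,k})=\{G\}$.

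Next, the first bijection. Define $\mathcal{L}\mapsto \mathrm{hsupp}(\mathcal{L}):=\bigcup_{X\in\mathcal{L}}\mathrm{hsupp}(X)=S(\mathcal{L})$, with inverse $S\mapsto \mathrm{Loc}_{\otimes}\langle\chi_{G,k}\mid G\in S\rangle$. The composite starting from ideals is the identity, since every ideal is torsion. For the other composite, let $\mathcal{L}_S=\mathrm{Loc}_{\otimes}\langle\chi_{G,k}\mid G\in S\rangle$. Clearly $S\subseteq S(\mathcal{L}_S)$. For the reverse inclusion I need $\mathrm{hsupp}$ of every object of $\mathcal{L}_S$ to lie in $S$. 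The class $\{X : \mathrm{hsupp}(X)\subseteq S\}$ is a localizing ideal, because:
\begin{itemize}
\item evaluation at $G$ is exact and preserves coproducts;
\item $(X\otimes Y)(G)\simeq X(G)\otimes Y(G)$, since the tensor product is pointwise and all objects are flat.
\end{itemize}
This class contains each $\chi_{G,k}$ with $G\in S$, so it contains $\mathcal{L}_S$. Thus $S(\mathcal{L}_S)=S$, and the first bijection is established.

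The second bijection identifies subsets of $\pi_0(\mathcal{U})$ with subsets of $\mathrm{Spc}(\mathrm{D}(\mathcal{U})^{c})$. Here I would invoke \cite{BBP+25b}: for essentially finite $\mathcal{U}$, the spectrum is the finite discrete set $\pi_0(\mathcal{U})$, with point $G$ corresponding to the prime $\{X \text{ compact}\mid X(G)\simeq0\}$. To stay self-contained I would argue as follows. The assignment $G\mapsto \mathcal{P}_G:=\ker(ev_G)^{c}$ gives primes, since $ev_G$ is a tensor functor to $\mathrm{D}(k[\mathrm{Out}(G)])$, whose only prime is $0$. The thick-ideal analogue of Proposition \ref{prop3.2}, namely \cite[Proposition 5.2]{BBP+25b}, classifies thick ideals by subsets of $\pi_0(\mathcal{U})$. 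This shows these primes are exactly the primes and are pairwise incomparable, hence the space is discrete, and homological support on compacts matches Balmer support.

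The main obstacle is this identification of the spectrum. It requires knowing that each $\chi_{G,k}$ is compact, or at least that the thick ideals are parametrized by $\pi_0(\mathcal{U})$. This uses Hypothesis \ref{hyp2.3}, essential finiteness, and $k[\mathrm{Out}(G)]$ being semisimple. I would cite \cite{BBP+25b} rather than reprove it.
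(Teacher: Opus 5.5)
Your proposal is correct and follows essentially the same route as the paper. You read off torsionness and the first bijection from Proposition \ref{prop3.2}, additionally spelling out, via the localizing ideal $\{X \mid \mathrm{hsupp}(X)\subseteq S\}$, why every subset $S$ is realized, a step the paper leaves implicit; for the second bijection you cite \cite{BBP+25b} for $\mathrm{Spc}(\mathrm{D}(\mathcal{U})^{c})\cong\pi_{0}(\mathcal{U})$, exactly as the paper does with its Theorem 5.3.
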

\begin{proof} The first claim comes from Proposition \ref{prop3.2} directly. For any localizing ideals $\mathcal{L}_{1}$, $\mathcal{L}_{2}$, we have $$\mathcal{L}_{1}\subseteq \mathcal{L}_{2}\Longleftrightarrow \mathrm{hsupp}(\mathcal{L}_{1})\subseteq \mathrm{hsupp}(\mathcal{L}_{2}).$$ In particular, every localizing ideal is determined by its homological support and the converse holds. This implies the first bijection. The second bijection follows from \cite[Theorem 5.3]{BBP+25b}.
\end{proof}

\begin{cor}\label{cor3.5}Let $\mathcal{U}$ be essentially finite and satisfy Hypothesis \ref{hyp2.3}. Then the telescope conjecture holds for $\mathrm{D}(\mathcal{U})$.
\end{cor}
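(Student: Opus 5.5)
By Theorem \ref{thm3.4}, every localizing ideal of $\mathrm{D}(\mathcal{U})$ is torsion. In particular, every smashing ideal $\mathcal{L}$ can be written as $\mathcal{L}=\mathrm{Loc}_{\otimes}\langle \chi_{G,k}\mid G\in S(\mathcal{L})\rangle$. So it suffices to show that each such ideal $\mathrm{Loc}_{\otimes}\langle \chi_{G,k}\mid G\in S\rangle$, for $S\subseteq\pi_0(\mathcal{U})$, is generated by compact objects. Once this is done, Proposition \ref{Prop2.12} and the subsequent remark identify compact generation as a localizing ideal with compact generation as a localizing subcategory, and the telescope conjecture follows.

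The plan is to produce, for each $S$, a set of compact objects whose homological support is exactly $S$, and then invoke Proposition \ref{prop3.2}. Since $\mathcal{U}$ is essentially finite, Proposition \ref{prop3.2} says that $\mathrm{Loc}_{\otimes}\langle X\rangle$ depends only on $\mathrm{hsupp}(X)$. It is therefore enough to find, for each $G\in\pi_0(\mathcal{U})$, a compact object $C_G$ with $\mathrm{hsupp}(C_G)=\{G\}$. Then
\[
\mathrm{Loc}_{\otimes}\langle C_G\mid G\in S\rangle=\mathrm{Loc}_{\otimes}\langle \chi_{G,k}\mid G\in S\rangle .
\]

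The natural candidate is $\chi_{G,k}$ itself, so the key step is to show $\chi_{G,k}\in\mathrm{D}(\mathcal{U})^c$. I would argue by descending induction on $G$ with respect to $\twoheadrightarrow$, which is possible because $\pi_0(\mathcal{U})$ is finite. Consider the triangle $F\to e_{G,k[\mathrm{Out}(G)]}\to\chi_{G,k[\mathrm{Out}(G)]}$ from the proof of Proposition \ref{prop3.2}. There $F$ lies in $\mathrm{Loc}\langle e_H\mid H\in\uparrow(\{G\})\setminus\{G\}\rangle$ and is supported on groups strictly above $G$. Since $\mathcal{U}$ is essentially finite and each $e_H(K)=k[\mathrm{Hom}(K,H)]$ is finite dimensional, $F$ has finite-dimensional homology in each degree. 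I expect $F$ to be bounded, since the semisimplicity of $k[\mathrm{Out}(K)]$ in characteristic zero should make the relevant resolutions finite. Granting this, a filtration of $F$ by its values at the finitely many groups expresses $F$ as an iterated extension of objects $\chi_{H,V}$ with $H$ strictly above $G$ and $V$ finite dimensional. By the induction hypothesis these are compact. Hence $F$ is compact, so $\chi_{G,k[\mathrm{Out}(G)]}$ is compact, and its summand $\chi_{G,k}$ is compact as well.

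The main obstacle is establishing this compactness, specifically the boundedness of $F$, or equivalently that $\chi_{G,k}$ admits a finite resolution by the $e_H$. In the published literature this should be contained in \cite{BBP+25b}, in the computation of $\mathrm{Spc}(\mathrm{D}(\mathcal{U})^c)$ for essentially finite $\mathcal{U}$ used in Theorem \ref{thm3.4}. I would first try to cite that directly. If this fails, there is a fallback: \cite[Theorem 5.3]{BBP+25b} identifies the Balmer spectrum with $\pi_0(\mathcal{U})$ via supports of compact objects. Taking thick ideals with support $\{G\}$ then yields compact $C_G$ with $\mathrm{hsupp}(C_G)=\{G\}$ without needing $\chi_{G,k}$ itself to be compact, and the argument above goes through unchanged.
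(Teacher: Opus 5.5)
Your proposal is correct and follows the paper's argument. Theorem \ref{thm3.4} makes every localizing ideal torsion, and the paper then simply cites \cite[Lemma 5.1]{BBP+25b} for the compactness of $\chi_{G,k}$, which is exactly the citation you propose. The ``main obstacle'' you flag is not real: for $V=k[\mathrm{Out}(G)]$, the object $F$ is the kernel of the pointwise surjection $e_G\to\chi_{G,k[\mathrm{Out}(G)]}$ in $\mathrm{A}(\mathcal{U})$, so it is a single module in degree $0$ and boundedness is automatic. Your descending induction, peeling off the subobject $\chi_{H,F(H)}$ for $H$ maximal in the finite support of $F$, therefore already gives a complete self-contained proof of that compactness.
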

\begin{proof} By  \cite[Lemma 5.1]{BBP+25b}, the object $\chi_{G,k}$ is compact in $\mathrm{D}(\mathcal{U})$ for each $G\in \mathcal{U}$. Then the result comes from Theorem \ref{thm3.4}.
\end{proof}

In the rest of this section, we will study the localizing ideals of $\mathrm{D}(\mathfrak{C}^{\mathrm{pr}})$, where $\mathfrak{C}^{\mathrm{pr}}$ is the family of cyclic groups of prime order together with the trivial group. The Balmer spectrum of $\mathrm{D}(\mathfrak{C}^{\mathrm{pr}})^{c}$
has been calculated by Barrero, Barthel, Pol, Strickland and Williamson in \cite{BBP+25b} via the reflective filtration of $\mathfrak{C}^{\mathrm{pr}}$. Moreover, their method could be applied to a large class of families, for example, the family of cyclic $p$-groups. We will provide a new elementary method for classifying  prime ideals in $\mathrm{D}(\mathfrak{C}^{\mathrm{pr}})^{c}$. This will help to understand the smashing ideals of $\mathrm{D}(\mathfrak{C}^{\mathrm{pr}})$.

\begin{nota}We denote by $\mathbb{P}$  the set of primes. For simplicity, we write $e_{p,V}$ for $e_{C_{p},V}$,~$X(p)$ for $X(C_{p})$ and so on. We write  $\Gamma_{p}:=\mathrm{Out}(C_{p})=(\mathbb{Z}/p)^{\times}$.
\end{nota}

\begin{lem}\label{lem3.7} Let $X\in \mathrm{D}(\mathfrak{C}^{\mathrm{pr}})^{c}$. Then there exists a finite subset $F\subseteq \mathbb{P}$ such that for $q\notin F$, the natural morphism $X(1)\rightarrow X(q)$ is an isomorphism in $\mathrm{D}(k[\Gamma_{q}])$, where $X(1)$ is equipped with the trivial $\Gamma_{q}$-action.
\end{lem}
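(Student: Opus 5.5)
Since $X$ is compact, it lies in $\mathrm{thick}\langle e_{G}\mid G\in\mathfrak{C}^{\mathrm{pr}}\rangle$; the generators $e_G$ are compact by the proposition recalled under Hypothesis \ref{hyp2.3}. I would therefore argue by thick-subcategory closure. Call $X$ \emph{good} if there is a finite set $F\subseteq\mathbb{P}$ such that for every $q\notin F$ the natural map $X(1)\to X(q)$ (induced by $C_q\twoheadrightarrow 1$) is an isomorphism in $\mathrm{D}(k[\Gamma_q])$, with $X(1)$ carrying the trivial action. The plan is to show that good objects form a thick subcategory and that each generator $e_G$ is good.

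\emph{Closure.} Evaluation at $1$ and at $q$ is exact, and the map $X(1)\to X(q)$ is natural in $X$ and compatible with the $\Gamma_q$-action. So for a triangle $X\to Y\to Z$ with $X,Y$ good, take $F=F_X\cup F_Y$. For $q\notin F$ we get a morphism of triangles in $\mathrm{D}(k[\Gamma_q])$ in which two of the three maps are isomorphisms, so the third is as well. Shifts are clear. For a retract $X$ of a good $Y$, the map for $X$ is a retract of an isomorphism, hence an isomorphism. Finite sums are handled by taking unions of the finite sets. The only point needing care is naturality: a map $X\to Y$ in $\mathrm{D}(\mathfrak{C}^{\mathrm{pr}})$ induces maps on values that commute with restriction along $C_q\twoheadrightarrow 1$ and with the $\mathrm{Out}(C_q)$-action. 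This holds because both come from the functoriality of $X$ on $\mathfrak{C}^{\mathrm{pr}}$.

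\emph{Generators.} We have $e_G(H)=k[\mathrm{Hom}_{\mathfrak{C}^{\mathrm{pr}}}(H,G)]$, where the morphisms are surjections modulo conjugation.
\begin{itemize}
\item For $G=1$, $e_1=\mathrm{1}$ is constant, so the map $k\to k$ is the identity with trivial action, and we may take $F=\emptyset$.
\item For $G=C_p$, $e_p(1)=0$, since there is no surjection $1\twoheadrightarrow C_p$. Also $e_p(q)=0$ for every prime $q\neq p$, since a surjection $C_q\twoheadrightarrow C_p$ forces $q=p$. Taking $F=\{p\}$, both sides vanish for all $q\notin F$.
\end{itemize}
Hence every generator is good, and so every compact object is good.

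The main obstacle is the closure step, specifically checking that the comparison map is a natural transformation of exact functors $\mathrm{D}(\mathfrak{C}^{\mathrm{pr}})\to\mathrm{D}(k[\Gamma_q])$, namely $\mathrm{triv}\circ ev_1\Rightarrow ev_{C_q}$. This lets the five-lemma argument go through for triangles in the derived category rather than only for short exact sequences. Once it is phrased as the natural transformation $ev_1\to ev_{C_q}$ induced by the morphism $C_q\to 1$ in $\mathfrak{C}^{\mathrm{pr}}$, which is $\Gamma_q$-equivariant since $1$ is terminal, the argument is formal.
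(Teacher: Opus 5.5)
Your proof is correct and is essentially the paper's argument. Both use that compact objects lie in the thick closure of $e_1$ and the $e_p$, observe that these generators satisfy the condition trivially ($e_1$ is constant, and $e_p$ vanishes at $1$ and at every $q\neq p$), and close up under thick operations using naturality of $X(1)\to X(q)$. The only difference is bookkeeping: the paper first fixes a finite $F$ with $X\in\mathrm{thick}\langle e_1,e_p\mid p\in F\rangle$ and then, for each $q\notin F$, uses the thick subcategory $\mathcal{E}_q$ of objects for which the map is an isomorphism, whereas you build the finite set into your definition of ``good'' and take unions.
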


\begin{proof}Since $\chi_{p,k}=e_{p,k}$  for $p\in \mathbb{P}$, it follows from \cite[Theorem 7.3]{BBP+25a} that $$\mathrm{D}(\mathfrak{C}^{\mathrm{pr}})^{c} = \bigcup_{\substack{F\subseteq \mathbb{P} \\ F \text{ finite}}} \operatorname{thick}\langle e_{1}, e_{p} \mid p\in F\rangle.$$ Thus we can choose some finite $F\subseteq \mathbb{P}$ such that $X\in {\mathrm{thick}}\langle e_{1}, e_{p} \mid p\in F\rangle$. For any $q\notin F$, define the subcategory $$\mathcal{E}_{q}:=\{Y\in \mathrm{D}(\mathfrak{C}^{\mathrm{pr}})^{c}\mid Y(1)\xrightarrow{\simeq} Y(q)\}.$$ It is easy to show that $\mathcal{E}_{q}$ is a thick subcategory containing $e_{1}$ and $e_{p}$ for $p\in F$. Thus we have $X\in {\mathrm{thick}}\langle e_{1}, e_{p} \mid p\in F\rangle\subseteq \mathcal{E}_{q}$, which concludes the proof.
\end{proof}

\begin{df}\label{df3.8} For every finite subset $F\subseteq \mathbb{P}$, we define $\gamma_{F}$ to be the cokernel of the monomorphism $\bigoplus_{p\in F}e_{p,k}\rightarrowtail e_{1}$ in $\mathrm{A}(\mathfrak{C}^{\mathrm{pr}})$. When $F=\{p\}$, we write $\gamma_{p}$ for simplicity.
\end{df}

Recall that the support datum $(\pi_{0}(\mathcal{U}), \mathrm{hsupp})$ induces group primes $P_{q}=\{X\in \mathrm{D}(\mathfrak{C}^{\mathrm{pr}})^{c}\mid X(q)\simeq 0\}$ for $q\in \{1\}\bigcup \mathbb{P}$.
\begin{lem}\label{lem3.9}The group primes in $\mathrm{D}(\mathfrak{C}^{\mathrm{pr}})^{c}$ have the following forms:
\begin{enumerate}
\item $P_{p}=\mathrm{thick}_{\otimes}\langle\gamma_{p}\rangle$ for $p\in \mathbb{P}$;

\item $P_{1}=\mathrm{thick}_{\otimes}\langle e_{p}\mid p\in \mathbb{P}\rangle$ .

\end{enumerate}
\end{lem}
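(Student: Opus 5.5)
The approach is to use the defining short exact sequences of $\gamma_{F}$ (Definition \ref{df3.8}) as "idempotent-like" triangles. Tensoring an object $X$ with such a triangle splits $X$ into a piece supported on $F$ and a piece supported off $F$. Since the tensor product is pointwise and every object of $\mathrm{A}(\mathfrak{C}^{\mathrm{pr}})$ is flat, the derived tensor is computed pointwise, so vanishing can be checked on homological support.

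First I record the values of the relevant objects. In $\mathfrak{C}^{\mathrm{pr}}$ the only non-automorphism maps are $C_{p}\twoheadrightarrow 1$. Hence:
\begin{itemize}
\item $e_{1}=\mathrm{1}$ is the constant functor $k$;
\item $e_{p}$ is $k[\Gamma_{p}]$ at $C_{p}$ and $0$ elsewhere (there is no surjection $1\twoheadrightarrow C_{p}$);
\item $e_{p,k}=\chi_{p,k}$ is $k$ at $C_{p}$ and $0$ elsewhere.
\end{itemize}
Consequently $\gamma_{F}(1)=k$, $\gamma_{F}(p)=0$ for $p\in F$, and $\gamma_{F}(q)=k$ for $q\notin F$. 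Because $k[\Gamma_{p}]$ is semisimple, $k$ is a direct summand of $k[\Gamma_{p}]$. So $e_{p,k}$ is a direct summand of $e_{p}$, hence compact and lying in $\mathrm{thick}\langle e_{p}\rangle$. The triangle $\bigoplus_{p\in F}e_{p,k}\to e_{1}\to\gamma_{F}$ then shows that $\gamma_{F}$ is compact. These evaluations give the easy inclusions immediately: $\gamma_{p}\in P_{p}$ and $e_{p}\in P_{1}$, and both $P_{p}$ and $P_{1}$ are thick ideals of $\mathrm{D}(\mathfrak{C}^{\mathrm{pr}})^{c}$.

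For the inclusion $P_{p}\subseteq\mathrm{thick}_{\otimes}\langle\gamma_{p}\rangle$, take a compact $X$ with $X(p)\simeq 0$ and tensor the triangle $e_{p,k}\to e_{1}\to\gamma_{p}$ with $X$. The term $X\otimes e_{p,k}$ is $X(p)\otimes k\simeq 0$ at $C_{p}$ and zero elsewhere, so it vanishes. Therefore $X\cong X\otimes e_{1}\xrightarrow{\simeq}X\otimes\gamma_{p}$, which lies in $\mathrm{thick}_{\otimes}\langle\gamma_{p}\rangle$.

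For the inclusion $P_{1}\subseteq\mathrm{thick}_{\otimes}\langle e_{p}\mid p\in\mathbb{P}\rangle$, take a compact $X$ with $X(1)\simeq 0$. By Lemma \ref{lem3.7} there is a finite $F\subseteq\mathbb{P}$ with $X(q)\simeq X(1)\simeq 0$ for all $q\notin F$, so $\mathrm{hsupp}(X)\subseteq F$. Now tensor $X$ with the triangle $\bigoplus_{p\in F}e_{p,k}\to e_{1}\to\gamma_{F}$. The object $X\otimes\gamma_{F}$ vanishes at $1$ and at every $q\notin F$ because $X$ does, and at every $p\in F$ because $\gamma_{F}$ does; hence it is zero. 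Thus $X\cong\bigoplus_{p\in F}X\otimes e_{p,k}$, and each summand lies in $\mathrm{thick}_{\otimes}\langle e_{p}\rangle$ since $e_{p,k}$ is a summand of $e_{p}$.

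The only non-formal input, and the main obstacle, is this reduction to finitely many primes. One cannot use an infinite sum of the $e_{p,k}$ inside the compact part, so the argument relies essentially on Lemma \ref{lem3.7}, i.e. the description of compacts as a union of the $\mathrm{thick}\langle e_{1},e_{p}\mid p\in F\rangle$. A minor point to check along the way is that the map $\bigoplus_{p\in F}e_{p,k}\to e_{1}$ is indeed a monomorphism. At $C_{p}$ it is the identification $k\cong k$, and at $1$ it is $0\to k$. So $\gamma_{F}$ has the stated values and the sequence gives an honest triangle in $\mathrm{D}(\mathfrak{C}^{\mathrm{pr}})$.
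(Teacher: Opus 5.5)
Your proposal is correct and follows the paper's proof essentially verbatim. For (1) you use $X\otimes e_{p,k}\simeq 0$ to get $X\cong X\otimes\gamma_p$; for (2) you invoke Lemma \ref{lem3.7} to obtain a finite $F$, then tensor with the triangle $\bigoplus_{p\in F}e_{p,k}\to e_1\to\gamma_F$ to get $X\cong\bigoplus_{p\in F}X\otimes e_{p,k}$. Your extra checks, namely that $\gamma_F$ is compact and that $e_{p,k}$ is a direct summand of $e_p$, are details the paper leaves implicit, and they are correct.
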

\begin{proof}Since $\gamma_{p}(p)=0$, $\gamma_{p}\in P_{p}$. Conversely, let $X\in P_{p}$. Then $X(p)=0$ by definition, and the identity $\gamma_{p}\otimes X\cong X$ implies $X\in \mathrm{thick}_{\otimes}\langle\gamma_{p}\rangle$. This proves (1).
For part (2), the $"\supseteq"$ containment is obvious. For the other containment, suppose $X\in P_{1}$.  Then by Lemma \ref{lem3.7}, there exists some finite subset $F\subseteq \mathbb{P}$ such that  $X(q)=0$ for $q\notin F$. Applying $X\otimes -$ to the triangle $$\bigoplus_{p\in F}e_{p,k}\rightarrow 1\rightarrow \gamma_{F},$$  we have $X\cong \bigoplus_{p\in F}e_{p,k}\otimes X$ since $\gamma_{F}\otimes X\simeq 0$. This implies that $X\in \mathrm{thick}_{\otimes}\langle e_{p}\mid p\in \mathbb{P}\rangle$, which concludes the proof.
\end{proof}

\begin{prop}\label{prop3.10} Every prime ideal of $\mathrm{D}(\mathfrak{C}^{\mathrm{pr}})^{c}$ is of the form $P_{1}$ or $P_{p}$ for some $p\in \mathbb{P}$. In particular, $\mathrm{Spc}(\mathrm{D}(\mathfrak{C}^{\mathrm{pr}})^{c})\simeq \mathbb{P}^{*}$, the one-point compactification of $\mathbb{P}$.
\end{prop}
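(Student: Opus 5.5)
Let $\mathcal{P}$ be a prime ideal of $\mathrm{D}(\mathfrak{C}^{\mathrm{pr}})^{c}$. The plan is to decide whether $\mathcal{P}$ contains every $e_{p}$ or misses some $e_{p}$, using the idempotent-like objects $e_{p,k}$ and $\gamma_{F}$ from Definition \ref{df3.8}, and then to invoke Lemma \ref{lem3.9}.

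\emph{Case 1: some $e_{p}\notin\mathcal{P}$.} Then also $e_{p,k}\notin \mathcal{P}$; indeed $e_{p}=e_{p,k[\Gamma_{p}]}$ decomposes as a sum of $e_{p,V}$, and each $e_{p,V}\cong e_{p,k}\otimes e_{p,V}$, so $e_{p,k}\in\mathcal{P}$ would force $e_{p}\in\mathcal{P}$. Next observe that $e_{p,k}=\chi_{p,k}$ is supported only at $C_{p}$, while $\gamma_{p}(p)=0$. Hence $e_{p,k}\otimes\gamma_{p}$ vanishes pointwise and so equals $0\in\mathcal{P}$. Primality then gives $\gamma_{p}\in\mathcal{P}$, and Lemma \ref{lem3.9}(1) yields $P_{p}=\mathrm{thick}_{\otimes}\langle\gamma_{p}\rangle\subseteq\mathcal{P}$. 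For the reverse inclusion, take $X\in\mathcal{P}$ and suppose $X(p)\not\simeq 0$. By Proposition \ref{prop3.2}, or rather its compact analogue \cite[Proposition 5.2]{BBP+25b}, $\chi_{p,k}\in\mathrm{thick}_{\otimes}\langle X\rangle$. Concretely, $\chi_{p,k}\otimes X\simeq\chi_{p,X(p)}$, which has $\chi_{p,k}$ as a summand by the argument of Lemma \ref{lem3.1}. This would give $e_{p,k}\in\mathcal{P}$, a contradiction. So $\mathcal{P}=P_{p}$.

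\emph{Case 2: $e_{p}\in\mathcal{P}$ for all $p$.} Then $P_{1}\subseteq\mathcal{P}$ by Lemma \ref{lem3.9}(2). For the reverse inclusion, let $X\in\mathcal{P}$. By Lemma \ref{lem3.7} there is a finite $F$ with $X(1)\simeq X(q)$ for $q\notin F$. We need $X(1)\simeq 0$, so suppose not. Then $X$ has support $\{1\}\cup(\mathbb{P}\setminus F)$ together with possibly some primes in $F$. Tensoring with $\gamma_{F}$ gives $\gamma_{F}\otimes X$, whose support is exactly $\{1\}\cup(\mathbb{P}\setminus F)=\mathrm{hsupp}(\gamma_{F})$. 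Since $\mathcal{P}$ is a tensor ideal, $\gamma_{F}\otimes X\in\mathcal{P}$. The key step is to show $\gamma_{F}\in\mathrm{thick}_{\otimes}\langle\gamma_{F}\otimes X\rangle$. I would use the compact support classification of \cite[Theorem 5.3]{BBP+25b} (the thick-ideal analogue of Proposition \ref{prop3.2}, valid for compacts with the appropriate supports). Alternatively, I would argue directly: $X(1)$ is a nonzero complex of vector spaces, so $k$ is a summand of $X(1)\otimes X(1)^{*}$, and one transports this splitting via the constant-in-$q$ structure. Granting this, $\gamma_{F}\in\mathcal{P}$. Also $\bigoplus_{p\in F}e_{p,k}\in\mathcal{P}$, so the triangle $\bigoplus_{p\in F}e_{p,k}\to 1\to\gamma_{F}$ puts $1\in\mathcal{P}$, contradicting properness. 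Hence $X(1)\simeq 0$, i.e.\ $X\in P_{1}$.

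\textbf{Main obstacle.} The delicate point is the generation statement $\gamma_{F}\in\mathrm{thick}_{\otimes}\langle\gamma_{F}\otimes X\rangle$ in Case 2, since $\mathfrak{C}^{\mathrm{pr}}$ is not essentially finite and Proposition \ref{prop3.2} does not apply directly. If the argument above falters, I would instead realize $\gamma_{F}\otimes X$ as $\gamma_{F}\otimes e_{1,X(1)}$ up to pieces supported on $F$, and reduce to semisimplicity of $k$-vector spaces.

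\textbf{Topology.} The homeomorphism with $\mathbb{P}^{*}$ then follows by computing supports. The closed sets are generated by $\mathrm{supp}(X)$ for compact $X$, and by Lemma \ref{lem3.7} these are finite subsets of $\mathbb{P}$ or sets containing $P_{1}$ together with a cofinite set of primes. These are exactly the closed sets of the one-point compactification, with $P_{1}$ as the point at infinity.
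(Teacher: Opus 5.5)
Your proof is correct and follows the paper's skeleton. You split according to whether some $e_p$ lies outside $\mathcal{P}$. In the first case you use $e_{p,k}\otimes\gamma_p=0$ and Lemma~\ref{lem3.9}(1) to get $P_p\subseteq\mathcal{P}$; in the second you use Lemma~\ref{lem3.9}(2) to get $P_1\subseteq\mathcal{P}$.

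The difference is how each case is closed. The paper appeals to maximality: $P_p$ is maximal by \cite[Corollary 6.4]{BBP+25b}, and $Q=P_1$ is obtained ``by the same argument'', which tacitly uses that $P_1$ is maximal too. You instead prove the reverse inclusions by hand:
\begin{itemize}
\item an $X\in\mathcal{P}$ with $X(p)\not\simeq 0$ would force $e_{p,k}=\chi_{p,k}\in\mathcal{P}$;
\item an $X\in\mathcal{P}$ with $X(1)\not\simeq 0$ would force $\gamma_F\in\mathcal{P}$, and hence $1\in\mathcal{P}$ via the triangle $\bigoplus_{p\in F}e_{p,k}\to 1\to\gamma_F$.
\end{itemize}
Neither step uses primality, so your argument shows that $P_p$ and $P_1$ are maximal thick ideals. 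In particular it justifies the step $\mathcal{P}=P_1$, which the paper leaves terse. The paper's version is shorter only because it outsources this.

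Some points to tighten:
\begin{itemize}
\item Both citations you float, \cite[Proposition 5.2]{BBP+25b} and \cite[Theorem 5.3]{BBP+25b}, are used in this paper for essentially finite families (compare Proposition~\ref{prop3.2} and Theorem~\ref{thm3.4}). They should not be relied on for $\mathfrak{C}^{\mathrm{pr}}$; use your concrete arguments instead.
\item In Case~1, $\chi_{p,X(p)}$ need not itself contain $\chi_{p,k}$ as a summand. First tensor with the compact object $e_{p,W^{*}}$, where $W$ is an irreducible summand of some nonzero $H_n(X(p))$, as in Lemma~\ref{lem3.1}.
\item In Case~2, your last suggestion is the right one, and it is exact rather than ``up to pieces on $F$''. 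The counit $e_{1,X(1)}\to X$ has the structure maps $X(1)\to X(q)$ as components. After tensoring with $\gamma_F$ it is an isomorphism at $1$, at each $q\notin F$ by Lemma~\ref{lem3.7}, and trivially on $F$. Hence $\gamma_F\otimes X\simeq\gamma_F\otimes e_{1,X(1)}$, which has $\gamma_F[-n]$ as a direct summand whenever $H_n(X(1))\neq 0$. This gives $\gamma_F\in\mathcal{P}$ with no further input.
\item For the topology, supports of compact objects form a basis of closed sets, not all closed sets: every subset containing $P_1$ is closed in $\mathbb{P}^{*}$. To identify the topologies you also need the basic closed sets to be realized as supports, e.g.\ $\mathrm{supp}(e_p)=\{P_p\}$ and $\mathrm{supp}(\gamma_F)=\{P_1\}\cup\{P_q\mid q\notin F\}$.
\end{itemize}
Your $\mathrm{hsupp}(\gamma_F)=\{1\}\cup(\mathbb{P}\setminus F)$ is the correct formula. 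The paper's displayed $\mathrm{supp}(\gamma_F)$ has $F$ where its complement should be.
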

\begin{proof}For different primes $p$  and  $q$, we have $e_{p}\otimes e_{q}=0$. It follows that a prime ideal $Q$ in  $\mathrm{D}(\mathfrak{C}^{\mathrm{pr}})^{c}$ can exclude at most one $e_{p}$. Suppose $e_{p}\notin Q$ for some $p\in \mathbb{P}$. Since $Q$ is prime, the equality $e_{p}\otimes \gamma_{p}=0$ implies that $\gamma_{p}\in Q$. Thus $P_{p}\subseteq Q$ by Lemma \ref{lem3.9}. By \cite[Corollary 6.4]{BBP+25b}, $P_{p}$ is maximal, hence $Q=P_{p}$.

Now suppose $e_{p}\in Q$ for each $p\in \mathbb{P}$. Then, by the same argument, we have $P_{1}\subseteq Q$, which implies that $Q=P_{1}$. This proves the first part.

 It remains to determine the topology on $\mathrm{Spc}(\mathrm{D}(\mathfrak{C}^{\mathrm{pr}})^{c})$. Note that $\mathrm{supp}(e_{p})=\{P_{p}\}$ for each $p\in \mathbb{P}$. Thus each $P_{p}$ is closed. On the other hand $\mathrm{supp}(\gamma_{p})=\mathrm{Spc}(\mathrm{D}(\mathfrak{C}^{\mathrm{pr}})^{c})\setminus \{P_{p}\}$, so each $P_{p}$ is also open, hence isolated. For a finite subset $F\subseteq \mathbb{P}$, we have $\mathrm{supp}(\gamma_{F})=\{P_{1}\}\bigcup \{P_{p}\mid p\in F\}$. Thus every neighbourhood of $P_{1}$ contains all but finitely many of the isolated primes $P_{p}$. Conversely, if $X\in \mathrm{D}(\mathfrak{C}^{\mathrm{pr}})^{c}$ with $P_{1}\notin \mathrm{supp}(X)$, then $X(1)\simeq 0$ and so $\mathrm{supp}(X)$ is finite. Hence every closed subset not containing $P_{1}$ is finite. This proves the final statement.
\end{proof}

\begin{thm}\label{thm3.11}Every localizing ideal of $\mathrm{D}(\mathfrak{C}^{\mathrm{pr}})$ is torsion and the homological support induces  bijections  \[
\{\text{localizing ideals of } \mathrm{D}(\mathfrak{C}^{\mathrm{pr}})\}
\simeq
\{\text{subsets of } \mathbb{P}^{*}\}
\simeq
\{\text{subsets of } \mathrm{Spc}(\mathrm{D}(\mathfrak{C}^{\mathrm{pr}})^{c}\}.
\]

\end{thm}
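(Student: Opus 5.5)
The plan is to prove the analogue of Proposition \ref{prop3.2} for $\mathfrak{C}^{\mathrm{pr}}$. For every $X\in \mathrm{D}(\mathfrak{C}^{\mathrm{pr}})$ I will show
\[
\mathrm{Loc}_{\otimes}\langle X\rangle=\mathrm{Loc}_{\otimes}\langle \chi_{G,k}\mid G\in \mathrm{hsupp}(X)\rangle .
\]
The induction used in Proposition \ref{prop3.2} is not available, because $\mathrm{hsupp}(X)$ may be infinite. Instead I will exploit the very simple shape of $\mathfrak{C}^{\mathrm{pr}}$.

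In $\mathfrak{C}^{\mathrm{pr}}$ the only groups surjecting onto $C_p$ are isomorphic to $C_p$. Hence $e_{p,k}=\chi_{p,k}$ is concentrated at $C_p$ with value $k$. Since the tensor product is pointwise, this gives $e_{p,k}\otimes X\cong \chi_{p,X(p)}$. Likewise $\chi_{1,k}$ is $k$ at the trivial group and $0$ at every $C_p$, so $\chi_{1,k}\otimes X\cong \chi_{1,X(1)}$.

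\textbf{Step 1: the containment ``$\subseteq$''.} Consider the morphism $\bigoplus_{p\in\mathbb{P}}e_{p,k}\to 1$ in $\mathrm{A}(\mathfrak{C}^{\mathrm{pr}})$. It is a monomorphism, because the summands have disjoint supports, and its cokernel is $\chi_{1,k}$. Every object is flat, so tensoring with $X$ gives a triangle
\[
\textstyle\bigoplus_{p}\chi_{p,X(p)}\to X\to \chi_{1,X(1)} .
\]
Applying Lemma \ref{lem3.1} to each nonzero term places $X$ in $\mathrm{Loc}_{\otimes}\langle \chi_{G,k}\mid G\in \mathrm{hsupp}(X)\rangle$.

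\textbf{Step 2: the containment ``$\supseteq$''.}
\begin{enumerate}
\item For a prime $p$, we have $\chi_{p,X(p)}\cong e_{p,k}\otimes X\in \mathrm{Loc}_{\otimes}\langle X\rangle$. Lemma \ref{lem3.1} then gives $\chi_{p,k}$ whenever $X(p)\not\simeq 0$.
\item For the trivial group, which I expect to be the main obstacle, the functor $\chi_{1,k}$ is not a tensor multiple of anything visibly attached to $X$.
\end{enumerate}
My plan for the trivial group is to enumerate $\mathbb{P}$ and take the finite sets $F_n$ of the first $n$ primes. The objects $\gamma_{F_n}$ of Definition \ref{df3.8} form a directed system of epimorphisms whose colimit in $\mathrm{A}(\mathfrak{C}^{\mathrm{pr}})$ is $\chi_{1,k}$. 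This can be checked pointwise, since $\gamma_{F}(p)=0$ for $p\in F$ and $\gamma_{F}(p)=k$ otherwise. Filtered colimits are exact in this Grothendieck category, so in the derived category $\chi_{1,k}\simeq\mathrm{hocolim}_n\gamma_{F_n}$ as a telescope. Because $\otimes$ preserves coproducts and triangles, we get
\[
\chi_{1,X(1)}\cong\chi_{1,k}\otimes X\simeq \mathrm{hocolim}_n(\gamma_{F_n}\otimes X)\in\mathrm{Loc}_{\otimes}\langle X\rangle .
\]
Lemma \ref{lem3.1} then gives $\chi_{1,k}$ whenever $X(1)\not\simeq 0$.

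\textbf{Conclusion.} For an arbitrary localizing ideal $\mathcal{L}$, we have $\mathcal{L}=\mathrm{Loc}_{\otimes}\langle X\mid X\in\mathcal{L}\rangle$. By the displayed equality this equals $\mathrm{Loc}_{\otimes}\langle \chi_{G,k}\mid G\in \bigcup_{X\in\mathcal{L}}\mathrm{hsupp}(X)\rangle$. All these $\chi_{G,k}$ lie in $\mathcal{L}$, so $\mathcal{L}$ is torsion and $S(\mathcal{L})=\mathrm{hsupp}(\mathcal{L})$.

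Conversely, let $S\subseteq\{1\}\cup\mathbb{P}$ be any subset. The objects $Y$ with $\mathrm{hsupp}(Y)\subseteq S$ form a localizing ideal, since the tensor product is pointwise and homology commutes with sums. Hence $\mathrm{hsupp}(\mathrm{Loc}_{\otimes}\langle\chi_{G,k}\mid G\in S\rangle)=S$. So $\mathcal{L}\mapsto\mathrm{hsupp}(\mathcal{L})$ is a bijection onto subsets of $\{1\}\cup\mathbb{P}=\mathbb{P}^{*}$. The identification with subsets of $\mathrm{Spc}(\mathrm{D}(\mathfrak{C}^{\mathrm{pr}})^{c})$ then follows from Proposition \ref{prop3.10}, sending $C_p\mapsto P_p$ and $1\mapsto P_1$.
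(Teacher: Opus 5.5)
Your proof is correct and is essentially the paper's argument. In one direction the paper also tensors with $\chi_{p,k}$ and applies Lemma \ref{lem3.1}; in the other it uses an idempotent $r_{\mathcal{L}}$ built from the same short exact sequence involving $\bigoplus\chi_{p,k}$ and $\chi_{1,k}$ that you tensor directly with $X$ in Step 1.

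Your ``main obstacle'' for the trivial group is not actually an obstacle. Since $\mathrm{Loc}_{\otimes}\langle X\rangle$ is closed under tensoring with arbitrary objects, $\chi_{1,X(1)}\cong\chi_{1,k}\otimes X$ lies in it immediately, exactly as for the primes. So the telescope over the $\gamma_{F_n}$ is valid but unnecessary.
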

\begin{proof} It suffices to prove that for any $X\in \mathrm{D}(\mathfrak{C}^{\mathrm{pr}})$, we have $X\in \mathcal{L}$ if and only if $\mathrm{hsupp}(X)\subseteq S(\mathcal{L})$. If $X\simeq 0$, this is clear. Now suppose $X\in \mathcal{L}$ and $p\in \mathrm{hsupp}(X)\subseteq \{1\}\bigcup \mathbb{P}$. Since $p\in \mathrm{hsupp}(X)$, we have $X(p)\not\simeq 0$, and so $\chi_{p,X(p)}\not\simeq 0$. Moreover, $$\chi_{p,k}\in \mathrm{Loc}_{\otimes}\langle \chi_{p,X(p)}\rangle\subseteq \mathrm{Loc}_{\otimes}\langle X\rangle\subseteq \mathcal{L},$$ where the first inclusion follows from Lemma \ref{lem3.1} and the second follows from the identity $X\otimes \chi_{p,k}\cong \chi_{p,X(p)}$.

 Conversely, let $X\in \mathrm{D}(\mathfrak{C}^{\mathrm{pr}})$ with $\mathrm{hsupp}(X)\subseteq S(\mathcal{L})$. Define the object $r_{\mathcal{L}}\in \mathrm{D}(\mathfrak{C}^{\mathrm{pr}})$ by \[
r_{\mathcal{L}}(q) = \begin{cases}
k & \text{if } q\in S(\mathcal{L})\\
0 & \text{if } q\notin S(\mathcal{L})
\end{cases}
\]
with the transition morphism $r_{\mathcal{L}}(1)\rightarrow r_{\mathcal{L}}(q)$ equal to identity if $1,q\in S(\mathcal{L})$ and zero otherwise. Then we have $r_{\mathcal{L}}\otimes r_{\mathcal{L}}\cong r_{\mathcal{L}}$ and $X\cong X\otimes r_{\mathcal{L}}$. Moreover, we have $r_{\mathcal{L}}\in \mathrm{Loc}_{\otimes}\langle \chi_{p,k}\mid p\in S(\mathcal{L})\rangle$. Indeed, if $1\notin S(\mathcal{L})$, then $r_{\mathcal{L}}\cong \bigoplus_{p\in S(\mathcal{L})}\chi_{p,k}$, which implies that $r_{\mathcal{L}}\in \mathrm{Loc}_{\otimes}\langle \chi_{p,k}\mid p\in S(\mathcal{L})\rangle$. If $1\in S(\mathcal{L})$, then there is a short exact sequence \[
\xymatrix@C=1cm{
  0 \ar[r] &
  \displaystyle\bigoplus_{\substack{p\in S(\mathcal{L}) \\ p\neq 1}} \chi_{p,k}
  \ar[r] &
  r_{\mathcal{L}} \ar[r] &
  \chi_{1,k} \ar[r] &
  0
}
\]
in $\mathrm{A}(\mathfrak{C}^{\mathrm{pr}})$.  The distinguished triangle in $\mathrm{D}(\mathfrak{C}^{\mathrm{pr}})$ induced by this sequence again shows that $r_{\mathcal{L}}\in \mathrm{Loc}_{\otimes}\langle \chi_{p,k}\mid p\in S(\mathcal{L})\rangle$. Therefore, $$X\in \mathrm{Loc}_{\otimes}\langle r_{\mathcal{L}}\rangle \subseteq \mathrm{Loc}_{\otimes}\langle \chi_{p,k}\mid p\in S(\mathcal{L})\rangle,$$ which concludes the proof.
\end{proof}

\begin{lem}\label{lem3.12}Let $\mathcal{L}_{S}=\mathrm{Loc}_{\otimes}\langle\chi_{p,k}\mid p\in S\rangle$, where $S=\{1\}\bigcup F$ for some subset $F\subseteq \mathbb{P}$, and let $Y\in \mathrm{D}(\mathfrak{C}^{\mathrm{pr}})$. Then $Y\in \mathcal{L}_{S}^\bot$ if and only if the following two conditions hold:\\
(1) $Y(p)=0$ for all $p\in F$; and\\
(2) the morphism
\[
Y(1)\longrightarrow \prod_{q\in \mathbb{P}\setminus F} Y(q)^{\Gamma_q}
\]
is a quasi-isomorphism, where the map is induced by the structure maps $Y(1)\to Y(q)$, with $Y(1)$ equipped with the trivial $\Gamma_q$-action.

\end{lem}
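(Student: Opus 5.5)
Since $\mathcal{L}_{S}$ is generated as a localizing ideal by the $\chi_{p,k}$ with $p\in S$, and $\mathrm{D}(\mathfrak{C}^{\mathrm{pr}})$ is generated by the $e_{G}$, I will first reduce the orthogonality condition to a test on objects of the form $\chi_{p,k}\otimes e_{G}$ together with their shifts. The point is that $\mathcal{L}_{S}=\mathrm{Loc}\langle \chi_{p,k}\otimes e_{G}\mid p\in S,\ G\in\mathfrak{C}^{\mathrm{pr}}\rangle$. This holds because $\chi_{p,k}\otimes -$ preserves coproducts and triangles, so $\chi_{p,k}\otimes\mathrm{D}(\mathfrak{C}^{\mathrm{pr}})$ lies in the localizing subcategory generated by the $\chi_{p,k}\otimes e_{G}$. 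Consequently $Y\in\mathcal{L}_{S}^{\bot}$ if and only if $\mathrm{Hom}(\Sigma^{i}\chi_{p,k}\otimes e_{G},Y)=0$ for all $i$, all $p\in S$ and all $G$.

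I will then compute these objects. For $p\in F$ we have $\chi_{p,k}=e_{p,k}$, and $e_{p,k}\otimes e_{G}$ is a sum of copies of $e_{p,k}$, or zero when $G\neq 1,p$. By adjunction, $\mathrm{Hom}(\Sigma^{i}e_{p,k},Y)=H_{i}(Y(p)^{\Gamma_{p}})$. Tensoring with $e_{p}$ instead (or noting that $e_{p,k}\otimes e_{p}\cong e_{p}^{\oplus}$) gives the full $H_{i}(Y(p))$, so the vanishing is equivalent to condition (1). For $p=1$, the object $\chi_{1,k}$ is the cokernel of $\bigoplus_{q\in\mathbb{P}}e_{q,k}\rightarrowtail e_{1}$, which is the analogue of $\gamma_{\mathbb{P}}$ in Definition~\ref{df3.8}. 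This gives a triangle
\[
\bigoplus_{q\in\mathbb{P}}e_{q,k}\to e_{1}\to\chi_{1,k}.
\]
Tensoring with $e_{q'}$ kills $\chi_{1,k}$, because $\chi_{1,k}(q')=0$. The only new test object is therefore $\chi_{1,k}$ itself, or $\chi_{1,k}\otimes e_{1}=\chi_{1,k}$.

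Next I will apply $\mathrm{Hom}(-,Y)$ to this triangle. Since $\mathrm{RHom}(e_{q,k},Y)\simeq Y(q)^{\Gamma_{q}}$ (taking invariants is exact in characteristic zero) and $\mathrm{RHom}(e_{1},Y)\simeq Y(1)$, we get a triangle of complexes
\[
\mathrm{RHom}(\chi_{1,k},Y)\to Y(1)\to\prod_{q\in\mathbb{P}}Y(q)^{\Gamma_{q}},
\]
in which the second map is induced by the structure maps. The thing to check carefully here is that the map $e_{q,k}\to e_{1}$ corresponds under adjunction to the restriction $Y(1)\to Y(q)$ landing in the invariants. Assuming condition (1), the factors with $q\in F$ vanish, so $\mathrm{RHom}(\chi_{1,k},Y)\simeq0$ exactly when the map in condition (2) is a quasi-isomorphism.

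Combining the two computations proves both directions. The step I expect to be the main obstacle is the reduction in the first paragraph: the generation of $\mathcal{L}_{S}$ as a plain localizing subcategory must be justified cleanly, and the identification of the derived Hom out of $e_{q,k}$ with derived invariants must be checked. The latter relies on the semisimplicity of $k[\Gamma_{q}]$ and the exactness of $e_{q,\bullet}$.
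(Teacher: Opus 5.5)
Your proposal is correct and follows essentially the paper's argument. You use the same short exact sequence $0\to\bigoplus_{q\in\mathbb{P}}\chi_{q,k}\to e_1\to\chi_{1,k}\to 0$, the same identification of maps out of $e_1$ and $e_{q,k}$ with the homology of $Y(1)$ and $Y(q)^{\Gamma_q}$, and the same test against $e_p\in\mathcal{L}_S$ for $p\in F$ to obtain condition (1). Your explicit reduction $\mathcal{L}_S=\mathrm{Loc}\langle\chi_{p,k}\otimes e_G\rangle$ usefully justifies the converse direction, which the paper dismisses as ``similar''. Two small corrections: $e_{p,k}\otimes e_p\cong e_p$, not a sum of copies of $e_{p,k}$; and $\chi_{1,k}\otimes e_q=0$ holds because $e_q(1)=0$ and $\chi_{1,k}$ is supported only at the trivial group, not merely because $\chi_{1,k}(q)=0$.
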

\begin{proof}  Consider the short exact sequence \[
\xymatrix@C=1cm{
  0 \ar[r] &
  \displaystyle\bigoplus_{\substack{p\in \mathbb{P} }} \chi_{p,k}
  \ar[r] &
  e_{1} \ar[r] &
  \chi_{1}\ar[r] &
  0
}
\]
in $\mathrm{A}(\mathfrak{C}^{\mathrm{pr}})$. This induces a distinguished triangle \[
\xymatrix@C=1cm{
  \displaystyle\bigoplus_{\substack{p\in \mathbb{P} }} \chi_{p,k}
  \ar[r] &
  e_{1} \ar[r] &
  \chi_{1}\ar[r] &
  \Sigma\displaystyle\bigoplus_{\substack{p\in \mathbb{P} }} \chi_{p,k}
}
\]
in $\mathrm{D}(\mathfrak{C}^{\mathrm{pr}})$. Applying the functor $\mathrm{Hom}_{\mathrm{D}(\mathfrak{C}^{\mathrm{pr}})}(-, Y)$ to this triangle we obtain a  long exact sequence
\[
\xymatrix@C=0.7cm@R=0.5cm{
  ~ &
  \mathrm{Hom}_{\mathrm{D}(\mathfrak{C}^{\mathrm{pr}})}
    \left(\bigoplus_{\substack{p\in \mathbb{P}}}\chi_{p,k},\, \Sigma^{n-1}Y\right)
  \ar[r] &
  \mathrm{Hom}_{\mathrm{D}(\mathfrak{C}^{\mathrm{pr}})}(\chi_1,\, \Sigma^nY)
  \ar[d] & & ~ \\
  & &
  \mathrm{Hom}_{\mathrm{D}(\mathfrak{C}^{\mathrm{pr}})}(e_1,\, \Sigma^nY)
  \ar[r]^-{\delta_n} &
  \mathrm{Hom}_{\mathrm{D}(\mathfrak{C}^{\mathrm{pr}})}
    \left(\bigoplus_{\substack{p\in \mathbb{P}}} \chi_{p,k},\, \Sigma^nY\right).
   &
}
\]
Note that $\mathrm{Hom}_{\mathrm{D}(\mathfrak{C}^{\mathrm{pr}})}(e_{1},\Sigma^{n}Y)=H_{n}(Y(1))$.
Since $$\mathrm{Hom}_{\mathrm{D}(\mathfrak{C}^{\mathrm{pr}})}(\chi_{p,k},\Sigma^{n}Y)=H_{n}(Y(p)^{\Gamma_{p}}),$$ we have
$$\mathrm{Hom}_{\mathrm{D}(\mathfrak{C}^{\mathrm{pr}})}(\displaystyle\bigoplus_{\substack{p\in \mathbb{P} }} \chi_{p,k},\Sigma^{n}Y)=\displaystyle\prod_{\substack{p\in \mathbb{P} }}H_{n}(Y(p)^{\Gamma_{p}}).$$ Under these identifications, $\delta_{n}$ is induced by the structural map $\delta: Y(1)\rightarrow \displaystyle\prod_{\substack{p\in \mathbb{P} }}Y(p)^{\Gamma_{p}}$. Therefore, $\mathrm{Hom}_{\mathrm{D}(\mathfrak{C}^{\mathrm{pr}})}(\chi_1,\, \Sigma^nY)=0$ for $\forall n\in \mathbb{Z}$  if and only if $\delta: Y(1)\rightarrow \displaystyle\prod_{\substack{p\in \mathbb{P} }}Y(p)^{\Gamma_{p}}$ is a quasi-isomorphism.

Assume $Y\in \mathcal{L}_{S}^\bot$. Since $e_{p}=\chi_{p}\in \mathcal{L}_{S}$ for $p\in F$, we have $$H_{n}(Y(p))=\mathrm{Hom}_{\mathrm{D}(\mathfrak{C}^{\mathrm{pr}})}(\Sigma^{n}e_{p}, Y)=\mathrm{Hom}_{\mathrm{D}(\mathfrak{C}^{\mathrm{pr}})}(\Sigma^{n}\chi_{p}, Y)=0$$ for all $ n\in \mathbb{Z}, p\in F$. Thus $Y(p)=0$ for $\forall p\in F$, so $Y$ satisfies condition (1). The above argument, together with  condition (1), implies condition (2).

The converse implication is similar.
\end{proof}

The following result characterizes when a localizing ideal $\mathcal{L}$ satisfying $1\in S(\mathcal{L})$ is smashing.
\begin{prop}\label{prop3.13}Let $\mathcal{L}_{S}=\mathrm{Loc}_{\otimes}\langle\chi_{p,k}\mid p\in S\rangle$, where $S=\{1\}\bigcup F$ for some subset $F\subseteq \mathbb{P}$. Then $\mathcal{L}_{S}^{\bot}$ is localizing if and only if $\mathbb{P}\setminus F$ is finite.
\end{prop}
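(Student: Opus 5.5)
We use the description of $\mathcal{L}_{S}^{\bot}$ in Lemma \ref{lem3.12} and reduce the question to whether the map in condition (2) commutes with coproducts. Set $T:=\mathbb{P}\setminus F$. By Lemma \ref{lem3.12}, $Y\in \mathcal{L}_{S}^{\bot}$ if and only if $Y(p)\simeq 0$ for $p\in F$ and
\[
\delta_{Y}\colon Y(1)\longrightarrow \prod_{q\in T} Y(q)^{\Gamma_q}
\]
is a quasi-isomorphism. Condition (1) is stable under coproducts, since evaluation at $p$ is exact and commutes with coproducts. The orthogonal $\mathcal{L}_{S}^{\bot}$ is always thick. Hence $\mathcal{L}_{S}^{\bot}$ is localizing exactly when condition (2) is stable under coproducts.

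\emph{If $T$ is finite.} Here $\prod_{q\in T}$ is a finite product, so it coincides with a finite direct sum. Taking $\Gamma_q$-invariants is exact in characteristic $0$ and commutes with direct sums. Evaluation at $1$ and at $q$ also commutes with coproducts. Thus for a family $Y_i$ in $\mathcal{L}_{S}^{\bot}$ we get $\delta_{\oplus Y_i}=\bigoplus_i\delta_{Y_i}$. A direct sum of quasi-isomorphisms is a quasi-isomorphism, so $\bigoplus Y_i\in\mathcal{L}_{S}^{\bot}$.

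\emph{If $T$ is infinite.} I will produce a countable family in $\mathcal{L}_{S}^{\bot}$ whose coproduct fails (2). The natural candidate is $Y:=\mathcal{R}\in\mathrm{A}(\mathfrak{C}^{\mathrm{pr}})$, where $\mathcal{R}(1)=\prod_{q\in T}k$, $\mathcal{R}(q)=k$ with trivial $\Gamma_q$-action for $q\in T$, and $\mathcal{R}(p)=0$ for $p\in F$. The structure map $\mathcal{R}(1)\to\mathcal{R}(q)$ is the $q$-th projection. Then $\delta_{\mathcal{R}}$ is the identity of $\prod_{q\in T}k$, so $\mathcal{R}\in\mathcal{L}_{S}^{\bot}$.

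Now consider $\bigoplus_{n\in\mathbb{N}}\mathcal{R}$. Its value at $1$ is $\bigoplus_{n}\prod_{q\in T}k$. The target of $\delta$ is $\prod_{q\in T}\bigoplus_{n}k$, and $\delta$ is the canonical map
\[
\bigoplus_{n}\prod_{q\in T}k\longrightarrow\prod_{q\in T}\bigoplus_{n}k.
\]
This map is not surjective when $T$ is infinite. To see this, enumerate $T=\{q_1,q_2,\dots\}$ and take the element whose $q_m$-component is the basis vector $\varepsilon_m$ of the $m$-th summand. Any element of the source involves only finitely many $n$, so this element has no preimage. Therefore $\bigoplus_n\mathcal{R}\notin\mathcal{L}_{S}^{\bot}$, and $\mathcal{L}_{S}^{\bot}$ is not localizing.

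The main technical care lies in Lemma \ref{lem3.12}: we must make sure that the identification of $\delta$ and the $\mathrm{Hom}$-computation are compatible with the chosen models. All objects involved are concentrated in degree $0$ and invariants are exact, so this should be straightforward. Everything else reduces to the interchange of $\bigoplus$ and $\prod$.
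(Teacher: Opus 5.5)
Your proof is correct. Its skeleton is the paper's: everything is read off from Lemma~\ref{lem3.12}, and in the infinite case the obstruction is that direct sums do not commute with infinite products. Both halves are organized somewhat differently, though.

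\emph{Finite case.} The paper introduces the inclusion $j\colon V_F=\{C_q\mid q\in\mathbb{P}\setminus F\}\hookrightarrow\mathfrak{C}^{\mathrm{pr}}$ and computes $(j_*X)(1)=\prod_{q\in\mathbb{P}\setminus F}X(q)^{\Gamma_q}$. It notes that $j_*$ preserves coproducts when $\mathbb{P}\setminus F$ is finite, and then proves $\mathcal{L}_S^{\bot}=\mathrm{Im}(j_*)$ using Lemma~\ref{lem3.12} and the unit $Y\to j_*j^*Y$. You bypass $j_*$ and check directly that conditions (1) and (2) are closed under coproducts. The input is the same: finite products and finite-group invariants commute with direct sums in characteristic $0$. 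Your version is shorter and suffices for the statement. The paper's packaging additionally identifies $\mathcal{L}_S^{\bot}$ with the essential image of the fully faithful functor $j_*$, so that $\mathcal{L}_S^{\bot}\simeq\mathrm{D}(V_F)$.

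\emph{Infinite case.} The paper uses $\bigoplus_q Y_q$, where $Y_q$ is $k$ at $1$ and at $q$ and zero elsewhere, so the failure is simply that $\bigoplus_q k\to\prod_q k$ is not surjective. You use countably many copies of the single object $\mathcal{R}$ and the non-surjectivity of $\bigoplus_n\prod_q k\to\prod_q\bigoplus_n k$. In fact $Y_q$ is $j_*$ of the object of $\mathrm{D}(V_F)$ that is $k$ at $C_q$ and $0$ elsewhere, while $\mathcal{R}$ is $j_*$ of the constant object $k$. So both witnesses exhibit the same failure of $j_*$ to commute with coproducts. The paper's witness is marginally more elementary. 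Yours shows that $\mathcal{L}_S^{\bot}$ is not even closed under countable sums of copies of one fixed object.
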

\begin{proof}Assume $\mathbb{P}\setminus F$ is finite. Let $j: V_{F}:=\{C_{q}\mid q\in \mathbb{P}\setminus F\}\hookrightarrow \mathfrak{C}^{\mathrm{pr}}$ denote the inclusion of the subcategory. The right Kan extension along $j$ induces a functor $j_{*}: \mathrm{D}(V_{F})\rightarrow \mathrm{D}(\mathfrak{C}^{\mathrm{pr}})$. It is easy to check that for any $X\in \mathrm{D}(V_{F})$, we have \[
(j_{*}X)(r) = \begin{cases}
X(r) & \text{if } r\in \mathbb{P}\setminus F\\
0 & \text{if } r\in F\\
\displaystyle\prod_{\substack{q\in \mathbb{P}\setminus F }}X(q)^{\Gamma_{q}} & r=1.
\end{cases}
\]
Since finite products, invariants under finite groups and direct sums all commute with each other, $j_{*}$ preserves coproducts. Hence the essential image $\mathrm{Im}(j_{*})$  is a localizing subcategory of $\mathrm{D}(\mathfrak{C}^{\mathrm{pr}})$. To prove that $\mathcal{L}_{S}^{\bot}$ is a localizing subcategory, it suffices to show that $\mathcal{L}_{S}^{\bot}=\mathrm{Im}(j_{*})$. The "$\supseteq$" containment follows from Lemma \ref{lem3.12}.\\
 For the reverse containment, we claim that the unit morphism $\eta_{Y}:Y\rightarrow j_{*}j^{*}Y$, induced by the adjunction, is an isomorphism for each $Y\in \mathcal{L}_{S}^{\bot}$. Indeed, for each $Y\in \mathrm{D}(\mathfrak{C}^{\mathrm{pr}})$, $\eta_{Y}$ is given by  \[
\eta_{Y}(r) = \begin{cases}
\mathrm{Id}_{Y(r)} & \text{if } r\in \mathbb{P}\setminus F\\
Y(r)\rightarrow 0 & \text{if } r\in F\\
Y(1)\rightarrow \displaystyle\prod_{\substack{q\in \mathbb{P}\setminus F }}Y(q)^{\Gamma_{q}} & r=1.
\end{cases}
\]
 It follows from Lemma \ref{lem3.12} that $\eta_{Y}$ is an isomorphism  for $Y\in \mathcal{L}_{S}^{\bot}$.\\
  Now suppose $\mathcal{L}_{S}^{\bot}$ is localizing. Assume $\mathbb{P}\setminus F$ is infinite. For each $q\in \mathbb{P}\setminus F$, define the object $Y_{q}$ by $Y_{q}(r)=k$ if $r=1,q$ and zero otherwise, with the structural map $Y_{q}(1)\rightarrow Y_{q}(q)$ equal to the identity. Then $Y_{q}\in \mathcal{L}_{S}^{\bot}$ for each $q\in \mathbb{P}\setminus F$, again by Lemma \ref{lem3.12}. Take the coproduct $Y:=\displaystyle\bigoplus_{\substack{q\in \mathbb{P}\setminus F }} Y_{q}$, which belongs to $\mathcal{L}_{S}^{\bot}$. Then we have a quasi-isomorphism $$Y(1)\simeq\displaystyle\bigoplus_{\substack{q\in \mathbb{P}\setminus F }} k \rightarrow \displaystyle\prod_{\substack{q\in \mathbb{P}\setminus F }}Y(q)^{\Gamma_{q}}\simeq\displaystyle\prod_{\substack{q\in \mathbb{P}\setminus F }}k,$$ which is impossible, since the left-hand side is an infinite direct sum and the right-hand side is an infinite direct product. This contradiction completes the proof.
\end{proof}

We are now ready to state the main theorem of this section.
\begin{thm}\label{thm3.14} The telescope conjecture holds for $\mathrm{D}(\mathfrak{C}^{\mathrm{pr}})$. Moreover, the homological support induces  bijections \[
\{\text{smashing ideals of } \mathrm{D}(\mathfrak{C}^{\mathrm{pr}})\}
\simeq
\{\text{open subsets of } \mathbb{P}^{*}\}
\simeq
\{\text{open subsets of } \mathrm{Spc}(\mathrm{D}(\mathfrak{C}^{\mathrm{pr}})^{c}\}.
\]
\end{thm}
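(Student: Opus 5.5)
By Theorem \ref{thm3.11}, every localizing ideal has the form $\mathcal{L}_S=\mathrm{Loc}_{\otimes}\langle \chi_{p,k}\mid p\in S\rangle$ for a unique subset $S\subseteq\{1\}\cup\mathbb{P}\cong\mathbb{P}^{*}$, namely $S=S(\mathcal{L})=\mathrm{hsupp}(\mathcal{L})$. The plan is therefore to decide, for each such $S$, whether $\mathcal{L}_S$ is smashing, and then to compare the smashing ones with the compactly generated ones. I use the topology on $\mathbb{P}^{*}$ from Proposition \ref{prop3.10}: each prime $p$ is isolated, and a set containing $1$ is open exactly when it is cofinite. So the open subsets are the arbitrary subsets $F\subseteq\mathbb{P}$, together with the sets $\{1\}\cup F$ with $\mathbb{P}\setminus F$ finite.

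\textbf{Case $1\notin S$.} Here $S=F\subseteq\mathbb{P}$, and every $\chi_{p,k}=e_{p,k}$ is compact. So $\mathcal{L}_S$ is compactly generated, and Proposition \ref{Prop2.12} shows it is smashing. Every subset of $\mathbb{P}$ is open, so this case matches the claimed bijection.

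\textbf{Case $1\in S$, so $S=\{1\}\cup F$.} Proposition \ref{prop3.13} says $\mathcal{L}_S$ is smashing if and only if $\mathbb{P}\setminus F$ is finite, which is exactly the condition that $S$ is open. It remains to show these smashing ideals are compactly generated. For finite $G=\mathbb{P}\setminus F$, consider the compact object $\gamma_G$ of Definition \ref{df3.8}. Its homological support is $\{1\}\cup F$: it is $k$ at $1$ and at each $q\notin G$, and $0$ at $q\in G$. By Theorem \ref{thm3.11}, localizing ideals are determined by homological support, so $\mathrm{Loc}_{\otimes}\langle\gamma_G, e_p\mid p\in F\rangle=\mathcal{L}_S$, and in fact $\mathrm{Loc}_{\otimes}\langle\gamma_G\rangle=\mathcal{L}_S$ already. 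This ideal is compactly generated.

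Combining the two cases, a localizing ideal is smashing exactly when its homological support is open in $\mathbb{P}^{*}$, and every such ideal is generated by compact objects. This proves the telescope conjecture. The first bijection in the statement is the restriction of Theorem \ref{thm3.11}, and the second comes from the homeomorphism of Proposition \ref{prop3.10}.

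\textbf{Main obstacle.} The step needing care is the claim that localizing ideals are detected by homological support for arbitrary (non-compact) objects, and that $\mathrm{hsupp}(\gamma_G)$ is computed correctly; both are supplied by Theorem \ref{thm3.11} and Definition \ref{df3.8}. The genuinely hard direction, that a non-cofinite $S$ containing $1$ is not smashing, is already done in Proposition \ref{prop3.13}. What remains is to note that every smashing ideal is a localizing ideal and so falls under the classification of Theorem \ref{thm3.11}.
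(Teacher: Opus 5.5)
Your proposal is correct and follows the paper's proof. Like the paper, you split on whether $1\in S(\mathcal{L})$, use compactness of $e_{p,k}=\chi_{p,k}$ in the first case, and use Proposition \ref{prop3.13} together with the compact object $\gamma_{\mathbb{P}\setminus F}$ in the second. The only difference is that you obtain $\mathrm{Loc}_{\otimes}\langle\gamma_{\mathbb{P}\setminus F}\rangle=\mathcal{L}$ directly from the support classification of Theorem \ref{thm3.11}, using your correct computation $\mathrm{hsupp}(\gamma_{\mathbb{P}\setminus F})=\{1\}\cup F$; the paper instead uses the explicit triangle $\bigoplus_{p\in F}\chi_{p,k}\to\gamma_{\mathbb{P}\setminus F}\to\chi_{1,k}$, and it leaves implicit the matching with open subsets of $\mathbb{P}^{*}$ that you spell out.
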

\begin{proof} By Theorem \ref{thm3.11}, each localizing ideal in $\mathrm{D}(\mathfrak{C}^{\mathrm{pr}})$ is of the form $\mathcal{L}=\mathrm{Loc}_{\otimes}\langle \chi_{p,k}\mid p\in S(\mathcal{L})\rangle$. If $1$ is not in $S(\mathcal{L})$, then $\mathcal{L}$ is compactly generated since $\chi_{p,k}=e_{p,k}$ is compact for $p\neq 1$. Now suppose $1\in S(\mathcal{L})$. We write  $S(\mathcal{L})=\{1\}\bigcup F$ with $F\subseteq \mathbb{P}$. By Proposition \ref{prop3.13}, $\mathcal{L}=\mathcal{L}_{S(\mathcal{L})}$ is smashing if and only if $\mathbb{P}\setminus F$ is finite. Now suppose $\mathbb{P}\setminus F$ is finite. We show that $\mathcal{L}$ is compactly generated.\\
Recall in the proof of Lemma \ref{lem3.9} that there is a triangle  $$\bigoplus_{p\in \mathbb{P}\setminus F}e_{p,k}\rightarrow e_{1}\rightarrow \gamma_{\mathbb{P}\setminus F}$$ in $\mathrm{D}(\mathfrak{C}^{\mathrm{pr}})$ with both  $\bigoplus_{p\in \mathbb{P}\setminus F}e_{p,k}$ and $e_{1}$ compact, it follows that $\gamma_{\mathbb{P}\setminus F}$ is also compact. There is another triangle $$\bigoplus_{p\in F}\chi_{p,k}\rightarrow \gamma_{\mathbb{P}\setminus F}\rightarrow \chi_{1,k},$$ which shows that $\chi_{1,k}\in \mathrm{Loc}_{\otimes}\langle \gamma_{\mathbb{P}\setminus F}, \chi_{p,k}\mid p\in F\rangle$. Hence we have $$\mathcal{L}=\mathrm{Loc}_{\otimes}\langle \chi_{1,k}, \chi_{p,k}\mid p\in F \rangle=\mathrm{Loc}_{\otimes}\langle \gamma_{\mathbb{P}\setminus F}, e_{p,k}\mid p\in F \rangle$$ which is compactly generated. This concludes our proof.

\end{proof}

\section{\bf Global representations of  $\mathfrak{C}_{p}$}

Let $\mathfrak{C}_{p}$ denote the family of cyclic $p$-groups for a fixed prime number $p$. In this section, we study the derived category $\mathrm{D}(\mathfrak{C}_{p})$ of $\mathfrak{C}_{p}$-global representations. It is shown in \cite{PS22} that  $\mathrm{A}(\mathfrak{C}_{p})$ is a locally noetherian Grothendieck abelian category. Let $\mathrm{A}(\mathfrak{C}_{p})^{c}$ denote the abelian subcategory of finitely generated objects. We show that there is a tensor equivalence between $\mathrm{D}(\mathfrak{C}_{p})^{c}$ and the bounded derived category of $\mathrm{A}(\mathfrak{C}_{p})^{c}$. We then provide a new  method for classifying  prime ideals of $\mathrm{D}(\mathfrak{C}_{p})^{c}$, using the tensor abelian geometry of $\mathrm{A}(\mathfrak{C}_{p})^{c}$ developed in \cite{Xu26}. Finally we classify the localizing ideals of $\mathrm{D}(\mathfrak{C}_{p})$ and prove that the telescope conjecture holds for $\mathrm{D}(\mathfrak{C}_{p})$.

\begin{df} We say $\mathcal{U}$ is noetherian if $\mathrm{A}(\mathcal{U})$ is locally noetherian, i.e., every subobject of $e_{G}$ is finitely generated for each $G\in \mathcal{U}$. We denote by $\mathrm{A}(\mathcal{U})^{c}$ the  abelian subcategory of finitely generated objects. We write $\mathrm{D}^{b}(\mathrm{A}(\mathcal{U})^{c})$ for the bounded derived category of $\mathrm{A}(\mathcal{U})^{c}$ and write \[
\operatorname{D}^{b}_{\mathrm{fg}}(\mathcal{U})
:=
\left\{
X\in \mathrm{D}(\mathcal{U})
\;\middle|\;
H_i(X)\in \mathrm{A}(\mathcal{U})^{c}\ \forall i\in\mathbb{Z},\;
H_n(X)=0\ \text{for } |n|\gg 0
\right\}.
\]
\end{df}

\begin{prop}\label{prop4.2}Let $\mathcal{U}$ be noetherian and satisfy Hypothesis \ref{hyp2.3}. Then we have the following:
\begin{enumerate}
\item $\mathrm{A}(\mathcal{U})^{c}$ is a tensor abelian category;

\item $\operatorname{D}^{b}_{\mathrm{fg}}(\mathcal{U})$ is a tensor triangulated category;

\item The inclusion $\mathrm{D}^{b}(\mathrm{A}(\mathcal{U})^{c})\hookrightarrow \operatorname{D}^{b}_{\mathrm{fg}}(\mathcal{U})$ is a tensor equivalence;

\item $\mathrm{D}(\mathcal{U})^{c}$ is a tensor triangulated subcategory of $\operatorname{D}^{b}_{\mathrm{fg}}(\mathcal{U})$.
\end{enumerate}
\end{prop}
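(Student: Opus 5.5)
We prove the four parts in order; each uses the one before it.

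For (1), $\mathrm{A}(\mathcal{U})^{c}$ is an abelian subcategory because $\mathrm{A}(\mathcal{U})$ is locally noetherian. Finitely generated objects are closed under quotients and extensions. They are also closed under subobjects: a subobject of a quotient of $\bigoplus_{i=1}^n e_{G_i}$ is the image of a subobject of $\bigoplus e_{G_i}$, which is finitely generated by noetherianity. Hence kernels, cokernels and images stay in $\mathrm{A}(\mathcal{U})^{c}$.

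For the tensor structure of (1), it suffices to check that $e_G\otimes e_H$ is finitely generated, since $\otimes$ is right exact in each variable (in fact exact, as all objects are flat). Hypothesis~\ref{hyp2.3} gives that $\mathrm{D}(\mathcal{U})^{c}$ is symmetric monoidal, so $e_G\otimes e_H$ is compact. It is concentrated in degree $0$ by flatness, and a compact object concentrated in degree $0$ is a finitely generated object of the heart. The unit $1$ lies in $\mathrm{A}(\mathcal{U})^{c}$ because it is compact by the hypothesis. Exactness of $\otimes$ in each variable then makes $\mathrm{A}(\mathcal{U})^{c}$ a tensor abelian category.

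For (2), we first show that $\mathrm{D}^{b}_{\mathrm{fg}}(\mathcal{U})$ is a thick triangulated subcategory. This follows from the long exact homology sequence, using closure of $\mathrm{A}(\mathcal{U})^{c}$ under kernels, cokernels and extensions from (1). For tensor closure, flatness gives a Künneth formula $H_n(X\otimes Y)\cong\bigoplus_{i+j=n}H_i(X)\otimes H_j(Y)$. This holds because everything is pointwise over a field: $(X\otimes Y)(G)=X(G)\otimes_k Y(G)$, and the identification is natural in $G$. Boundedness and finite generation of the homology of $X\otimes Y$ then follow from (1).

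For (3), we use the standard criterion, as in Keller or Krause: the functor $\mathrm{D}^{b}(\mathcal{B})\to\mathrm{D}^{b}_{\mathcal{B}}(\mathcal{A})$ is an equivalence when every epimorphism $A\twoheadrightarrow B$ in $\mathcal{A}$ with $B\in\mathcal{B}$ admits some $B'\to A$ with $B'\in\mathcal{B}$ whose composite to $B$ is still epi. Here $B$ is finitely generated, so we lift a finite set of generators of $B$, i.e. elements of $B(G_i)$, to elements $a_i\in A(G_i)$; these define $\bigoplus e_{G_i}\to A$ and we take $B'$ to be its image. A dual version of this statement (the Krause/Keller condition that $\mathrm{Ext}$ agree) gives full faithfulness, and essential surjectivity follows by induction on the homological amplitude. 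The inclusion visibly respects $\otimes$, since both tensors are computed termwise and no deriving is needed.

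For (4), $\mathrm{D}(\mathcal{U})^{c}=\mathrm{thick}\langle e_G\rangle$ by the compact generation recalled above. Each $e_G$ lies in $\mathrm{D}^{b}_{\mathrm{fg}}(\mathcal{U})$, and the latter is thick by (2), so the containment holds. It is a tensor triangulated subcategory because $\mathrm{D}(\mathcal{U})^{c}$ is closed under $\otimes$ and contains the unit, by Hypothesis~\ref{hyp2.3}.

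The main obstacle is (3). There one must confirm that the Ext groups computed in $\mathrm{A}(\mathcal{U})^{c}$ and in $\mathrm{A}(\mathcal{U})$ agree, and the lifting argument above is exactly what provides this. A secondary subtlety is the finite generation of $e_G\otimes e_H$ in (1), which relies genuinely on the compact-unit and monoidal hypotheses.
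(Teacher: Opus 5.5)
Your proposal is correct and follows essentially the paper's route: Künneth for (2), the standard comparison theorem for Serre subcategories for (3), and compact generation for (4). For (3) you also verify the theorem's hypothesis by lifting finitely many generators, where the paper only cites Stacks Tag 0FCL. The one variation is in (1), where you get closure of $\mathrm{A}(\mathcal{U})^{c}$ under $\otimes$ from compactness of $e_G\otimes e_H$ (the recalled fact that $\mathrm{D}(\mathcal{U})^{c}$ is tensor-closed under Hypothesis~\ref{hyp2.3}), whereas the paper shows the unit is finitely generated and then cites [PS22, Proposition 8.7]. Your unjustified step that a compact object concentrated in degree $0$ is finitely generated does hold: such an object is a summand of $H_0$ of a bounded complex of finite sums of $e_G$'s, and $\mathrm{A}(\mathcal{U})$ is locally noetherian.
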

\begin{proof}  By the assumptions and \cite[Theorem 7.3]{BBP+25a}, the tensor unit $1$ is finitely generated. Then (1) holds by \cite[Proposition 8.7]{PS22}. For part (2), by (1), the tensor product of two finitely generated objects is again finitely generated. Moreover, since every object in $\mathrm{A}(\mathcal{U})$ is flat, the K\"{u}nneth formula applies, so the tensor product of two objects in $\operatorname{D}^{b}_{\mathrm{fg}}(\mathcal{U})$ again lies in $\operatorname{D}^{b}_{\mathrm{fg}}(\mathcal{U})$. This establishes (2). Part (3) comes from a standard comparison theorem for derived categories of Serre subcategories, see, for example, \cite[Tag 0FCL]{S24}. Part (4) follows directly from the definitions.
\end{proof}

\begin{ex}\label{ex4.3} $\mathfrak{C}^{\mathrm{pr}}$ is not noetherian, by \cite[Proposition 13.3]{PS22}.
\end{ex}

\begin{ex}\label{ex4.4}If $\mathcal{U}$ is essentially finite, then $\mathcal{U}$ is noetherian by \cite[Proposition 11.9]{PS22} and $\mathrm{D}(\mathcal{U})^{c}=\operatorname{D}^{b}_{\mathrm{fg}}(\mathcal{U})$ by \cite[Proposition 7.4]{BBP+25a}.
\end{ex}

\begin{ex}\label{ex4.5}Let $E_{p}$ denote the family of elementary abelian $p$-groups. Then $E_{p}$ is noetherian by \cite[Theorem 13.4]{PS22}. We also have $\mathrm{D}(E_{p})^{c}\subsetneq \operatorname{D}^{b}_{\mathrm{fg}}(E_{p})$. Indeed, for each $G\in E_{p}$, $\chi_{G,k}\in \operatorname{D}^{b}_{\mathrm{fg}}(E_{p})$ but $\chi_{G,k}$ is not compact by \cite[Theorem 7.7]{BBP+25a}.
\end{ex}

\begin{df}Let $\mathcal{U}$ be noetherian and satisfy Hypothesis \ref{hyp2.3}. For $G\in \mathcal{U}$, we define the subcategory $P^{b}_{G}\subset \operatorname{D}^{b}_{\mathrm{fg}}(\mathcal{U})$ by
$$P^{b}_{G}:=\{X\in\operatorname{D}^{b}_{\mathrm{fg}}(\mathcal{U}) \mid H_{*}(X)(G)=0          \}.$$
We call such a subcategory a group prime.
\end{df}

\begin{lem}\label{lem4.7}Let $\mathcal{U}$ be noetherian and satisfy Hypothesis \ref{hyp2.3}. Then $(\pi_{0}(\mathcal{U}), \mathrm{hsupp})$ is a support datum on  $\operatorname{D}^{b}_{\mathrm{fg}}(\mathcal{U})$, which induces a continuous injective map $$P^{b}: \pi_{0}(\mathcal{U})\rightarrow \mathrm{Spc}(\operatorname{D}^{b}_{\mathrm{fg}}(\mathcal{U})),\quad\quad [G]\mapsto P^{b}_{G}.$$
\end{lem}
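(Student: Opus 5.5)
We need to verify Balmer's support datum axioms for $\mathrm{hsupp}$ restricted to $\operatorname{D}^{b}_{\mathrm{fg}}(\mathcal{U})$, with $\pi_{0}(\mathcal{U})$ carrying the topology for which the closed subsets are exactly the finite unions of sets of the form $\mathrm{hsupp}(X)$. We then invoke Balmer's universal property \cite{Bal05}. It gives a unique continuous map $P^{b}:\pi_{0}(\mathcal{U})\to \mathrm{Spc}(\operatorname{D}^{b}_{\mathrm{fg}}(\mathcal{U}))$ sending $[G]$ to $\{X\mid [G]\notin\mathrm{hsupp}(X)\}$, which is precisely $P^{b}_{G}$. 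So the plan is: (i) check the axioms, (ii) read off the map, (iii) prove injectivity. Continuity is automatic from the universal property, since $P^{b}$ pulls back $\mathrm{supp}(X)$ to $\mathrm{hsupp}(X)$.

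For (i), each axiom reduces to a pointwise statement in $\mathrm{D}(k[\mathrm{Out}(G)])$ after applying the exact, coproduct- and tensor-preserving functor $ev_{G}$.
\begin{itemize}
\item $\mathrm{hsupp}(0)=\emptyset$, and $\mathrm{hsupp}(1)=\pi_{0}(\mathcal{U})$ because $1(G)=k$.
\item $\mathrm{hsupp}(X\oplus Y)=\mathrm{hsupp}(X)\cup\mathrm{hsupp}(Y)$ and $\mathrm{hsupp}(\Sigma X)=\mathrm{hsupp}(X)$ are immediate.
\item For a triangle $X\to Y\to Z$, evaluating at $G$ gives a triangle of complexes, so the long exact homology sequence yields $\mathrm{hsupp}(Y)\subseteq \mathrm{hsupp}(X)\cup\mathrm{hsupp}(Z)$.
\item $\mathrm{hsupp}(X\otimes Y)=\mathrm{hsupp}(X)\cap\mathrm{hsupp}(Y)$. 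Here we use that $(X\otimes Y)(G)\simeq X(G)\otimes_{k}Y(G)$, because all objects are flat and the tensor product is pointwise. The Künneth formula over the field $k$ then gives $H_{*}(X(G)\otimes Y(G))\cong H_{*}(X(G))\otimes H_{*}(Y(G))$, which vanishes iff one factor does.
\end{itemize}
Each $\mathrm{hsupp}(X)$ is closed by the choice of topology. Proposition~\ref{prop4.2}(2) guarantees we are working inside a tensor triangulated category, so these computations are legitimate there.

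For (ii), $P^{b}_{G}$ is a prime thick tensor ideal by the axioms just checked. It is thick since triangles and summands are detected pointwise, a tensor ideal by the tensor formula, and prime since $X\otimes Y\in P^{b}_{G}$ forces $G\notin\mathrm{hsupp}(X)\cap\mathrm{hsupp}(Y)$. It is proper since $1\notin P^{b}_{G}$.

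For (iii), injectivity is the main obstacle: for non-isomorphic $G,H$ we need an object of $\operatorname{D}^{b}_{\mathrm{fg}}(\mathcal{U})$ that vanishes at exactly one of them. My first attempt is $\chi_{G,k}$. Since $\mathcal{U}$ is noetherian, $e_{G}$ is finitely generated and $\chi_{G,k}$ is a quotient of $e_{G}$, hence finitely generated; Example~\ref{ex4.5} confirms it lies in $\operatorname{D}^{b}_{\mathrm{fg}}$ even where it is not compact. By definition $\mathrm{hsupp}(\chi_{G,k})=\{[G]\}$, so $\chi_{G,k}\in P^{b}_{H}\setminus P^{b}_{G}$, and therefore $P^{b}_{G}\neq P^{b}_{H}$. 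The only point requiring care is the identification of $\chi_{G,k}$ as a finitely generated quotient of $e_{G}$ concentrated at $G$, which follows directly from its definition.
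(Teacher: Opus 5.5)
Your proposal is correct and is essentially the argument the paper invokes by deferring to \cite[Proposition 4.14]{BBP+25b}: verify the support-datum axioms pointwise (flatness and K\"{u}nneth for the tensor axiom), apply Balmer's universal property, and separate points by an object of $\operatorname{D}^{b}_{\mathrm{fg}}(\mathcal{U})$; your choice $\chi_{G,k}$ is available here, unlike in the compact setting (cf.\ Example~\ref{ex4.5}), and it even shows that the $P^{b}_{G}$ are pairwise incomparable. The one thing to fix is the topology: the sets $\mathrm{hsupp}(X)$ are already closed under finite unions (via $X\oplus Y$) but need not be closed under arbitrary intersections, so you should take them as a \emph{basis} of closed sets rather than as all the closed sets, and this changes no step of your argument.
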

\begin{proof}The proof is the same as in \cite[Proposition 4.14]{BBP+25b}.
\end{proof}

Let $P\subseteq \mathrm{A}(\mathcal{U})^{c}$ be a prime Serre ideal, in the sense of \cite[Definition 3.1]{Xu26}. We have a subcategory $\Phi(P):=\{X\in \operatorname{D}^{b}_{\mathrm{fg}}(\mathcal{U})\mid H_{i}(X)\in P,~\forall i\in \mathbb{Z}\}$ of $\operatorname{D}^{b}_{\mathrm{fg}}(\mathcal{U})$. The following result shows that it is actually a prime ideal.
\begin{prop}\label{prop4.8} Let $\mathcal{U}$ be noetherian and satisfy Hypothesis \ref{hyp2.3}. Then there is a continuous injective map $$\Phi: \mathrm{Spc}(\mathrm{A}(\mathcal{U})^{c})\rightarrow \mathrm{Spc}(\operatorname{D}^{b}_{\mathrm{fg}}(\mathcal{U})),\quad\quad P\mapsto \Phi(P).$$
\end{prop}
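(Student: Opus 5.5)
We prove three things in turn: that $\Phi(P)$ is a prime thick tensor ideal of $\operatorname{D}^{b}_{\mathrm{fg}}(\mathcal{U})$, that $\Phi$ is injective, and that $\Phi$ is continuous. Throughout we use that every object of $\mathrm{A}(\mathcal{U})$ is flat, so the Künneth formula reads
\[
H_{n}(X\otimes Y)\cong\bigoplus_{i+j=n}H_{i}(X)\otimes H_{j}(Y).
\]
We also use that a prime Serre ideal $P$ in the sense of \cite[Definition 3.1]{Xu26} is a proper Serre subcategory of $\mathrm{A}(\mathcal{U})^{c}$. It is an ideal: $A\in P$ gives $A\otimes B\in P$. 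It is prime: $A\otimes B\in P$ gives $A\in P$ or $B\in P$.

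\emph{$\Phi(P)$ is a prime ideal.} $\Phi(P)$ is closed under shifts. For a triangle $X\to Y\to Z$ with $X,Z\in\Phi(P)$, the long exact homology sequence exhibits each $H_i(Y)$ as an extension of a subobject of $H_i(Z)$ by a quotient of $H_i(X)$; since $P$ is Serre, $H_i(Y)\in P$. Summands are clear, since $H_i$ of a summand is a summand. The ideal property follows from Künneth together with the ideal property of $P$. The unit $1$ is finitely generated (Proposition~\ref{prop4.2}) and $1\notin P$ because $P$ is proper, so $\Phi(P)$ is proper. For primality, take $X\otimes Y\in\Phi(P)$ with $X\notin\Phi(P)$, say $H_i(X)\notin P$. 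For every $j$, the object $H_i(X)\otimes H_j(Y)$ is a summand of $H_{i+j}(X\otimes Y)\in P$, so primality of $P$ gives $H_j(Y)\in P$. Hence $Y\in\Phi(P)$.

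\emph{Injectivity.} Viewing $A\in\mathrm{A}(\mathcal{U})^{c}$ as a complex in degree $0$ gives $A\in P\iff A\in\Phi(P)$. Hence $P=\Phi(P)\cap\mathrm{A}(\mathcal{U})^{c}$, and $P$ is recovered from $\Phi(P)$.

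\emph{Continuity.} This is the step requiring the most care, since it depends on the topology that \cite{Xu26} puts on $\mathrm{Spc}(\mathrm{A}(\mathcal{U})^{c})$. I will assume it is the Balmer-type topology with closed basis $\mathrm{supp}(A)=\{P\mid A\notin P\}$. It suffices to show that preimages of the basic closed sets $\mathrm{supp}(X)=\{\mathcal{Q}\mid X\notin\mathcal{Q}\}$ are closed. We have
\[
\Phi^{-1}(\mathrm{supp}(X))=\{P\mid \exists\, i,\ H_i(X)\notin P\}=\bigcup_i \mathrm{supp}(H_i(X)).
\]
As $X$ is bounded, only finitely many $H_i(X)$ are nonzero, so this is a finite union of closed sets and hence closed. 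If \cite{Xu26} instead uses a different basis, for instance one built from Serre ideals generated by finitely many objects, the same finite-union argument still goes through, provided the supports of finitely generated objects are closed there.
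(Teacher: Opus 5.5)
Your proof is correct and follows the paper's argument. Injectivity uses the same identity $\Phi(P)\cap\mathrm{A}(\mathcal{U})^{c}=P$, and continuity uses the same finite-union formula $\Phi^{-1}\mathrm{supp}(X)=\bigcup_n\mathrm{supp}(H_n(X))$. The one difference is in primality: the paper passes to the Serre quotient $\mathrm{A}(\mathcal{U})^{c}/P$ and compares the lowest non-vanishing homology degrees of $X$ and $Y$, whereas you apply the full K\"unneth splitting directly in $\mathrm{A}(\mathcal{U})$. That is valid here because the tensor product is computed pointwise over the field $k$, and it is slightly simpler.
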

\begin{proof}First, let $P\subseteq \mathrm{A}(\mathcal{U})^{c}$ be a prime Serre ideal. We first show that $\Phi(P)$ is a prime thick ideal of $\operatorname{D}^{b}_{\mathrm{fg}}(\mathcal{U})$. Note that $\mathrm{D}^{b}(\mathrm{A}(\mathcal{U})^{c})\simeq\operatorname{D}^{b}_{\mathrm{fg}}(\mathcal{U})$. It is easy to see that $\Phi(P)$ is a thick ideal. Suppose $X,Y\notin \Phi(P)$. Choose the smallest degrees $m$ and $n$ such that $H_{m}(X)\notin P$ and $H_{n}(Y)\notin P$. Consider the functor $\pi: \mathrm{D}^{b}(\mathrm{A}(\mathcal{U})^{c})\rightarrow \mathrm{D}^{b}(\mathrm{A}(\mathcal{U})^{c}/P)$. Then $H_{m}(\pi(X))\neq 0$ and $H_{n}(\pi(Y))\neq 0$ with all lower homology objects vanishing. Hence \[
\begin{aligned}
    \pi (H_{m+n}(X\otimes Y)) & \simeq H_{m+n}(\pi (X\otimes Y) )\\
         & \simeq H_{m+n}(\pi (X)\otimes \pi (Y) ) \\
         & \simeq H_{m}(\pi (X))\otimes H_{n}(\pi (Y))\\
         &\neq 0,
\end{aligned}
\]
which implies that $H_{m+n}(X\otimes Y)\notin P$ and thus $X\otimes Y\notin \Phi(P)$.\\
The injectivity of $\Phi$ follows from the identity $\Phi(P)\bigcap \mathrm{A}(\mathcal{U})^{c}= P$, for each $P\in \mathrm{Spc}(\mathrm{A}(\mathcal{U})^{c})$. Now let $X\in \operatorname{D}^{b}_{\mathrm{fg}}(\mathcal{U})$. Note that $X\notin \Phi(P)$ if and only if there exists some $n$ such that $H_{n}(X)\notin P$.   Then $$\Phi^{-1}\mathrm{supp}(X)  = \{P\in \mathrm{Spc}(\mathrm{A}(\mathcal{U})^{c})\mid X\notin \Phi(P)   \}=\bigcup_{n} \mathrm{supp}(H_{n}(X)),$$ which is closed since $X$ is a bounded complex. This proves that $\Phi$ is continuous and concludes the proof.
\end{proof}

\begin{rem} Let $\mathcal{U}$ be noetherian and satisfy Hypothesis \ref{hyp2.3}. Then $(\pi_{0}(\mathcal{U}), \mathrm{hsupp})$ is also a support datum on $\mathrm{A}(\mathcal{U})^{c}$ and $\mathrm{D}(\mathcal{U})^{c}$, by \cite[Lemma 4.6]{Xu26} and \cite[Lemma 4.8]{BBP+25b}. Let $P^{a}$ and $P^{c}$ denote the induced maps from $\pi_0(\mathcal{U})$ to the spectrum of the tensor abelian category $\mathrm{A}(\mathcal{U})^{c}$ and to the Balmer spectrum of compact objects, respectively. Then we have the following diagram \[
\xymatrix{
  & & \pi_{0}(\mathcal{U}) \ar[d]^{P^{b}}\ar[dl]_{P^{a}}\ar[dr]^{P^{c}} & \\
  & \mathrm{Spc}(\mathrm{A}(\mathcal{U})^{c}) \ar[r]_{\Phi} & \mathrm{Spc}(\operatorname{D}^{b}_{\mathrm{fg}}(\mathcal{U})) \ar[r]_{\Psi} & \mathrm{Spc}(\mathrm{D}(\mathcal{U})^{c}),
}
\]
where $\Psi$ is the continuous map induced by the inclusion $\mathrm{D}(\mathcal{U})^{c}\hookrightarrow \operatorname{D}^{b}_{\mathrm{fg}}(\mathcal{U})$. This diagram connects the two spaces  $\mathrm{Spc}(\mathrm{A}(\mathcal{U})^{c})$ and $\mathrm{Spc}(\mathrm{D}(\mathcal{U})^{c})$.  We will see in Proposition \ref{prop4.11} that the composition $\Psi\circ\Phi$ is a homeomorphism when  $\mathcal{U}=\mathfrak{C}_{p}$.
\end{rem}

\begin{nota} In the rest of this section, we study the global representations of $\mathfrak{C}_{p}$. For simplicity, we write $e_{n,V}$ for $e_{C_{p^{n}},V}$,~$X(n)$ for $X(C_{p^{n}})$,  $\Gamma_{n}$ for $\mathrm{Out}(C_{p^{n}})=(\mathbb{Z}/p^{n})^{\times}$ and so on. We set $$T^{D}:=\mathrm{Loc}_{\otimes}\langle \chi_{i,k}\mid i\in \mathbb{N}\rangle\subseteq \mathrm{D}(\mathfrak{C}_{p})$$ and $$T^{A}:=\mathrm{Loc}_{\otimes}\langle \chi_{i,k}\mid i\in \mathbb{N}\rangle\subseteq \mathrm{A}(\mathfrak{C}_{p}).$$
\end{nota}
Recall from \cite[Theorem 5.7]{Xu26} that the only prime Serre ideals of $\mathrm{A}(\mathfrak{C}_{p})^{c}$ are $P_{n}:=\{X\in \mathrm{A}(\mathfrak{C}_{p})^{c}\mid X(n)=0\}$  for $n\in \mathbb{N}$ and $P_{\infty}:=\{X\in \mathrm{A}(\mathfrak{C}_{p})^{c}\mid \operatorname{hsupp}(X)\text{ is finite}\}$ .

\begin{prop}\label{prop4.11} We have $\mathrm{D}(\mathfrak{C}_{p})^{c}=\operatorname{D}^{b}_{\mathrm{fg}}(\mathfrak{C}_{p})$, and $\Phi:\mathrm{Spc}(\mathrm{A}(\mathfrak{C}_{p})^{c})\rightarrow \mathrm{Spc}(\operatorname{D}^{b}_{\mathrm{fg}}(\mathfrak{C}_{p}))$ is a homeomorphism. In particular, $\mathrm{Spc}(\mathrm{D}(\mathfrak{C}_{p})^{c})\simeq \mathbb{N}^{*}$.
\end{prop}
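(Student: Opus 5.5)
We split the proof into three steps: the equality $\mathrm{D}(\mathfrak{C}_{p})^{c}=\operatorname{D}^{b}_{\mathrm{fg}}(\mathfrak{C}_{p})$, the surjectivity of $\Phi$ by direct classification of primes, and the topological identification.

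\textbf{Step 1: compacts.} By Proposition \ref{prop4.2}(4) it suffices to show $\operatorname{D}^{b}_{\mathrm{fg}}(\mathfrak{C}_{p})\subseteq \mathrm{D}(\mathfrak{C}_{p})^{c}$. Since $\operatorname{D}^{b}_{\mathrm{fg}}(\mathfrak{C}_{p})\simeq \mathrm{D}^{b}(\mathrm{A}(\mathfrak{C}_{p})^{c})$ is generated as a thick subcategory by finitely generated objects of the heart, it is enough to show that every finitely generated $X\in \mathrm{A}(\mathfrak{C}_{p})$ is compact, i.e. has a finite resolution by finite sums of $e_{n,V}$. I would do this using the torsion theory: $X$ sits in $0\to tX\to X\to X/tX\to 0$.
\begin{itemize}
\item A finitely generated torsion object has finite homological support. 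It is therefore built by finite extensions from the $\chi_{n,V}$. In $\mathfrak{C}_{p}$ the quotient category above $C_{p^n}$ is a chain, and $\uparrow\{C_{p^n}\}\setminus\{C_{p^n}\}$ is generated by $C_{p^{n+1}}$. So one expects an exact sequence $0\to e_{n+1,W}\to e_{n,V}\to\chi_{n,V}\to 0$, with $W$ the restricted representation, showing that $\chi_{n,V}$ is compact. This is the cyclic analogue of \cite[Lemma 5.1]{BBP+25b}, and I would verify it by computing $e_{n,V}(m)$ for $m>n$.
\item A finitely generated torsion-free object should eventually agree with $e_{N,V}$ for large $N$, by the structure results of \cite{PS22} for $\mathfrak{C}_{p}$. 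Hence it differs from a compact object by a torsion finitely generated object.
\end{itemize}
This step is the main obstacle: one needs finite projective dimension of fg objects in $\mathrm{A}(\mathfrak{C}_{p})$, which in this case rests on semisimplicity in characteristic $0$ and the chain shape of the family.

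\textbf{Step 2: every prime is $\Phi(P)$.} Let $Q$ be a prime of $\operatorname{D}^{b}_{\mathrm{fg}}$. As in Proposition \ref{prop3.10}, observe that $\chi_{m,k}\otimes\chi_{n,k}=0$ for $m\neq n$. Hence $Q$ excludes at most one $\chi_{n,k}$, and there are two cases.
\begin{itemize}
\item If $\chi_{n,k}\notin Q$: for $X\in P^b_n$ we have $X\otimes \chi_{n,k}=0$, so $P^b_n\subseteq Q$. Since $P^b_n$ is maximal (its quotient is $\mathrm{D}^b(k\Gamma_n)$, which has a unique prime), $Q=P^b_n=\Phi(P_n)$.
\item If all $\chi_{n,k}\in Q$: then $Q$ contains every object with finite homological support. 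Any $X\in\Phi(P_\infty)$ has homology with finite support, so $X$ lies in the thick subcategory generated by the $\chi_{n,V}$ (by dévissage on homology) and thus in $Q$. So $\Phi(P_\infty)\subseteq Q$.
\end{itemize}
To show $\Phi(P_\infty)$ is maximal, note that if $X\notin\Phi(P_\infty)$ then some homology has infinite support. Tensoring with a suitable compact object should then give $1$ modulo finite-support objects: the quotient by $\Phi(P_\infty)$ is $\mathrm{D}^b$ of the generic category $\mathrm{A}^c/P_\infty$, which is equivalent to fg modules over $\varprojlim$-type rank data, essentially $\mathrm{D}^b(\mathrm{Mod}_{fg}\,k)$ after taking invariants. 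This gives $Q=\Phi(P_\infty)$, hence $\Phi$ is bijective.

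\textbf{Step 3: topology.} Supports of compacts are $\bigcup_n\mathrm{supp}(H_n X)$, as in Proposition \ref{prop4.8}, so the closed sets are pulled back exactly, and $\Phi$ is a homeomorphism onto $\mathrm{Spc}(\mathrm{A}(\mathfrak{C}_p)^c)$. That space is $\mathbb{N}^*$ by \cite[Theorem 5.7]{Xu26}, in parallel with Proposition \ref{prop3.10}: the $P_n$ are isolated via $\chi_{n,k}$ and $\gamma$-type objects, and the neighbourhoods of $P_\infty$ are cofinite.
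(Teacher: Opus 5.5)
Your proposal is correct and follows essentially the paper's route: finitely generated objects of the heart are shown to be compact via $0\to e_{n+1,k}\to e_{n,k}\to\chi_{n,k}\to 0$ and eventual constancy, and primes are classified by which single $\chi_{n,k}$ (if any) they exclude, with maximality of $\Phi(P_n)$ and $\Phi(P_\infty)$ read off from the quotients. Your first case, using $X\otimes\chi_{n,k}\simeq 0$ for $X\in P^b_n$, is in fact slicker than the paper's detour through $\mathrm{thick}_\otimes\langle e_{n+1,k},\chi_{j,k}\mid j\neq n\rangle$.

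Two small corrections. First, the kernel in your exact sequence is $e_{n+1,W}$ with $W$ the \emph{inflation} of $V$ along $\Gamma_{n+1}\to\Gamma_n$, not a restriction. Second, the generic quotient $\mathrm{A}(\mathfrak{C}_p)^c/P_\infty$ is not essentially $\mathrm{Mod}_{fg}\,k$; it is a semisimple category with many simples, the colimit of the $\mathrm{rep}(\Gamma_N)$ along inflation. Your maximality argument survives, since it only needs semisimplicity and the fact that $1$ is a summand of $V\otimes V^{*}$ for each nonzero $V$.
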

\begin{proof}  By Proposition \ref{prop4.2} we have $\mathrm{D}(\mathfrak{C}_{p})^{c}\subseteq\operatorname{D}^{b}_{\mathrm{fg}}(\mathfrak{C}_{p})$. Now we prove the reverse containment. By the bounded truncation triangle, it suffices to show that each object in $\mathrm{A}(\mathfrak{C}_{p})^{c}$ is compact when viewed as a complex concentrated in degree 0. Note that  \[
e_{n,V}(m) = \begin{cases}
\mathrm{Inf}^{\Gamma_{m}}_{\Gamma_{n}}V & \text{if } m\geq n\\
0 & \text{if } m<n
\end{cases}
\]
and there exists a short exact sequence $$\xymatrix@C=0.5cm{
  0 \ar[r] & e_{n+1,k} \ar[r] & e_{n,k} \ar[r] &\chi_{n,k} \ar[r] & 0 }$$
in $\mathrm{A}(\mathfrak{C}_{p})^{c}$. Thus $\chi_{n,k}\in \mathrm{D}(\mathfrak{C}_{p})^{c}$ for each $n\in \mathbb{N}$. Now let $M\in \mathrm{A}(\mathfrak{C}_{p})^{c}$. Choose a finite presentation of $M$ $$\xymatrix@C=0.5cm{
  \displaystyle\bigoplus_{\substack{\alpha\in I }} e_{a_{\alpha},V_{\alpha}} \ar[r] & \displaystyle\bigoplus_{\substack{\beta\in J }} e_{b_{\beta},V_{\beta}} \ar[r] & M \ar[r] & 0 }$$
with $I$ and $J$ finite. Fix some $N$ larger than all $a_{\alpha}$ and $b_{\beta}$. Then for each $m>N$, we have $M(m)\cong \mathrm{Inf}^{\Gamma_{m}}_{\Gamma_{N}}M(N)$. So the natural map $f:e_{N,M(N)}\rightarrow M$ is an isomorphism for all sufficiently large degrees. Moreover, $\mathrm{Ker}(f)$ and $\mathrm{Coker}(f)$ are supported on a finite set of groups. Thus the objects $e_{N,M(N)}$, $\mathrm{Ker}(f)$ and $\mathrm{Coker}(f)$ are all compact, which implies that $M$ is compact.\\
 We  now prove  that the only prime ideals in $\operatorname{D}^{b}_{\mathrm{fg}}(\mathfrak{C}_{p})$ are $\mathcal{P}_{n}:=\Phi(P_{n})$ for $n\in \mathbb{N}$ and $\mathcal{P}_{\infty}:=\Phi(P_{\infty})$. Let $\mathcal{P}\subseteq \operatorname{D}^{b}_{\mathrm{fg}}(\mathfrak{C}_{p})$ be a prime ideal. Since $\chi_{i}\otimes \chi_{j}=0$ when $i\neq j$,  at most one $\chi_{i}$ can lie outside $\mathcal{P}$. \\
 Suppose $\chi_{i}\notin \mathcal{P}$ for some $i$. Then the identity $\chi_{i}\otimes e_{i+1,k}=0$ implies $e_{i+1,k}\in \mathcal{P}$. We claim that $\mathcal{P}_{i}\subseteq \mathrm{thick}_{\otimes}\langle e_{i+1,k}, \chi_{j}\mid j\in \mathbb{N}, j\neq i \rangle$. It suffices to prove that every $M\in \mathrm{A}(\mathfrak{C}_{p})^{c}$ with $M(i)=0$ belongs to $\mathrm{thick}_{\otimes}\langle e_{i+1,k}, \chi_{j}\mid j\in \mathbb{N}, j\neq i \rangle$. By the argument above, there exists some $N\gg i$ such that the natural map $f: e_{N,M(N)}\rightarrow M$ is an isomorphism for all sufficiently large degrees $m>N$. Since $\mathrm{hsupp}(\mathrm{Ker}(f))$ and $\mathrm{hsupp}(\mathrm{Coker}(f))$ are finite and do not contain $i$, it follows that $\mathrm{Ker}(f)$ and $\mathrm{Coker}(f)$ are contained in $\mathrm{thick}_{\otimes}\langle e_{i+1,k}, \chi_{j}\mid j\in \mathbb{N}, j\neq i \rangle$. We then have $M\in \mathrm{thick}_{\otimes}\langle e_{i+1,k}, \chi_{j}\mid j\in \mathbb{N}, j\neq i \rangle$  since the same holds for $e_{N,M(N)}$. This proves the claim. Now we have $$\mathcal{P}_{i}\subseteq \mathrm{thick}_{\otimes}\langle e_{i+1,k}, \chi_{j}\mid j\in \mathbb{N}, j\neq i \rangle\subseteq \mathcal{P}.$$ But $\mathcal{P}_{i}$ is maximal, since $\operatorname{D}^{b}_{\mathrm{fg}}(\mathfrak{C}_{p})/\mathcal{P}_{i}\simeq \mathrm{D}^{b}(k[\Gamma_{i}])$. Therefore we have $\mathcal{P}=\mathcal{P}_{i}$.\\
 Suppose $\chi_{i}\in \mathcal{P}$ for every $i\in \mathbb{N}$. Then $\mathcal{P}_{\infty}=\mathrm{thick}_{\otimes}\langle \chi_{i}\mid i\in \mathbb{N}\rangle\subseteq \mathcal{P}$. But $\mathcal{P}_{\infty}$ is  maximal since $\operatorname{D}^{b}_{\mathrm{fg}}(\mathfrak{C}_{p})/\mathcal{P}_{\infty}\simeq \mathrm{D}^{b}(\mathrm{A}(\mathfrak{C}_{p})^{c}/P_{\infty}$) is equivalent to the bounded derived category of a semisimple tensor abelian category. Thus we have $\mathcal{P}=\mathcal{P}_{\infty}$, and so $\mathcal{P}_{n}$ for $n\in \mathbb{N}$ and $\mathcal{P}_{\infty}$ are the only prime ideals in $\operatorname{D}^{b}_{\mathrm{fg}}(\mathfrak{C}_{p})$.\\
 It follows immediately that $\Phi$ is a homeomorphism.
\end{proof}

Finally, we study the localizing ideals of $\mathrm{D}(\mathfrak{C}_{p})$.
\begin{lem}\label{lem4.12} Let $\mathcal{A}$ be a locally noetherian Grothendieck tensor abelian category, with $\mathcal{A}^{c}$ the subcategory of noetherian objects. If $\mathcal{A}^{c}$ is closed under tensor product, there is a bijection
\[
\{\text{localizing ideals of }\mathcal{A}\}
\rightarrow
\{\text{Serre ideals of }\mathcal{A}^c\},\quad
\mathcal{L}\mapsto \mathcal{L}\cap \mathcal{A}^c.
\]
\end{lem}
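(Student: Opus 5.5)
The plan is to construct an explicit inverse to $\mathcal{L}\mapsto \mathcal{L}\cap\mathcal{A}^{c}$, following the classical Gabriel correspondence between localizing subcategories of a locally noetherian Grothendieck category and Serre subcategories of its noetherian objects. Here a localizing subcategory of $\mathcal{A}$ means a Serre subcategory closed under coproducts, and a localizing ideal is one that is also a tensor ideal. For a Serre ideal $\mathcal{S}\subseteq\mathcal{A}^{c}$, we define $\varinjlim\mathcal{S}$ to be the full subcategory of objects $X\in\mathcal{A}$ all of whose noetherian subobjects lie in $\mathcal{S}$; equivalently, $X$ is a filtered colimit (a directed union) of objects of $\mathcal{S}$. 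The proof then consists of checking four things: the map is well defined, $\varinjlim\mathcal{S}$ is a localizing ideal, and the two composites are identities.

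The first step is well-definedness. If $\mathcal{L}$ is a localizing ideal, then $\mathcal{L}\cap\mathcal{A}^{c}$ is a Serre subcategory of $\mathcal{A}^{c}$. It is closed under $-\otimes Y$ for $Y\in\mathcal{A}^{c}$, because $\mathcal{A}^{c}$ is closed under tensor product by hypothesis and $\mathcal{L}$ is a tensor ideal.

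The second step is to show $\varinjlim\mathcal{S}$ is localizing. That it is a Serre subcategory closed under coproducts is Gabriel's theorem, for which I would cite Gabriel or Krause instead of reproving it. The key point is that every object of a locally noetherian category is the directed union of its noetherian subobjects, and in a locally noetherian category a noetherian subobject of a directed union $\bigcup X_i$ already lies in some finite subunion. This is where local noetherianity is used.

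The third step is the tensor-ideal property, which I expect to be the main obstacle, since $\otimes$ need not be exact in general. Let $X\in\varinjlim\mathcal{S}$ and $Y\in\mathcal{A}$, and write $X=\varinjlim X_i$ and $Y=\varinjlim Y_j$ as directed unions of noetherian subobjects with $X_i\in\mathcal{S}$. Since $\otimes$ preserves colimits in each variable (it is a Grothendieck tensor category, so $-\otimes Y$ is a left adjoint or at least cocontinuous), we get $X\otimes Y\cong\varinjlim_{i,j}X_i\otimes Y_j$. Each $X_i\otimes Y_j$ lies in $\mathcal{A}^{c}$ by hypothesis, and it lies in $\mathcal{S}$ because $\mathcal{S}$ is an ideal. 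A filtered colimit of objects of $\mathcal{S}$ is a quotient of their coproduct, and $\varinjlim\mathcal{S}$ is closed under coproducts and quotients, so $X\otimes Y\in\varinjlim\mathcal{S}$. No flatness is needed, because we only use closure under colimits, not under images of monomorphisms.

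The fourth step is that the two assignments are mutually inverse. First, $(\varinjlim\mathcal{S})\cap\mathcal{A}^{c}=\mathcal{S}$: a noetherian object is its own largest noetherian subobject, and $\mathcal{S}$ is closed under subobjects. Second, for a localizing ideal $\mathcal{L}$ we have $\varinjlim(\mathcal{L}\cap\mathcal{A}^{c})=\mathcal{L}$. The inclusion $\supseteq$ holds because each $X\in\mathcal{L}$ is the union of its noetherian subobjects, which lie in $\mathcal{L}$ since $\mathcal{L}$ is closed under subobjects. The inclusion $\subseteq$ holds because a directed union of objects of $\mathcal{L}$ is a quotient of their coproduct and hence lies in $\mathcal{L}$.
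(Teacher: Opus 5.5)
Your proposal is correct and follows the same route as the paper, which simply invokes the classical Gabriel correspondence $\mathcal{L}\mapsto\mathcal{L}\cap\mathcal{A}^{c}$, $\mathcal{S}\mapsto\varinjlim\mathcal{S}$, asserts that both maps respect tensor ideals, and leaves the details to the reader. You supply exactly those details; the one extra ingredient you use, that $\otimes$ commutes with filtered colimits in each variable, is implicit in the paper's setting and holds in all its applications, where $\otimes$ is computed pointwise.
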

\begin{proof}
This follows from the classical Gabriel correspondence. Indeed, under our assumption, the correspondence preserves tensor ideals, since both the forward and inverse maps are compatible with the tensor product. We leave the details to the reader.
\end{proof}

\begin{lem}\label{lem4.13} Let $X\in \mathrm{A}(\mathfrak{C}_{p})$. Then we have the following:
\begin{enumerate}
\item $X\in T^{A}$ if and only if $X$ is torsion;

\item If $X$ is not torsion, then $\mathrm{hsupp}(X)$ is cofinite.

\end{enumerate}
\end{lem}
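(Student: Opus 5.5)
Both parts rest on the explicit shape of the indexing category. In $\mathfrak{C}_{p}$ there is a surjection $C_{p^m}\twoheadrightarrow C_{p^n}$ exactly when $m\geq n$, so for $X\in\mathrm{A}(\mathfrak{C}_{p})$ the structure maps run $X(n)\to X(m)$ for $m\geq n$. An element $x\in X(n)$ is torsion precisely when its image in $X(m)$ vanishes for some $m\geq n$ (any two surjections $C_{p^m}\to C_{p^n}$ differ by an automorphism).

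For part (1), first show that $T^{A}$ lies inside the class of torsion objects. The torsion objects form a localizing subcategory of $\mathrm{A}(\mathfrak{C}_{p})$: they are closed under subobjects, quotients, extensions and coproducts, by a routine elementwise check. For extensions, lift the element, kill its image in the quotient at some larger level, then kill the resulting element of the subobject at a further level. This class is also a tensor ideal. Indeed, if $x\in X(n)$ dies at level $m$, then every $x\otimes y\in (X\otimes Y)(n)$ also dies at level $m$, since restriction is computed factorwise. Each $\chi_{i,k}$ is torsion, being supported at the single level $i$. Hence $T^{A}$ is contained in the torsion class.

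For the converse inclusion in part (1), let $X$ be torsion and write it as a quotient of a sum of objects $e_{n,x}$ generated by single elements $x\in X(n)$. It then suffices to show that the subobject $\langle x\rangle\subseteq X$ generated by a torsion element lies in $T^{A}$. Since $x$ dies at some level $m$, the object $\langle x\rangle$ is supported on the finite set $\{n,\dots,m-1\}$. Filter $\langle x\rangle$ by the subobjects $\langle x\rangle_{\geq j}$, which agree with $\langle x\rangle$ in levels $\geq j$ and vanish below. The subquotients are concentrated in single levels $j$, so each is of the form $\chi_{j,V}$, and $\chi_{j,V}\cong\chi_{j,k}\otimes e_{j,V}\in T^{A}$. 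A finite filtration then puts $\langle x\rangle$, and hence $X$, in $T^{A}$.

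For part (2), suppose $X$ is not torsion, and choose a torsion-free element $x\in X(n)$. By definition, for every $m\geq n$ the restriction of $x$ to $X(m)$ is nonzero, so $X(m)\neq 0$ for all $m\geq n$. Therefore $\mathrm{hsupp}(X)\supseteq\{m\geq n\}$, which is cofinite.

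The main obstacle is the bookkeeping in part (1): checking that the torsion class is closed under extensions and is a tensor ideal, and that the level filtration has subquotients of the form $\chi_{j,V}$. The key point is that a subobject of a functor that vanishes below level $j$ and is concentrated in level $j$ modulo higher levels really is a $\chi_{j,V}$.
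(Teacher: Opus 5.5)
Your proof is correct, but for the converse in part (1) you take a more elementary route than the paper.

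\textbf{The paper's route.} It writes a torsion $X$ as the filtered colimit of its finitely generated subobjects $X_i$, and uses \cite[Lemma 5.5]{Xu26} to reduce to showing $X_i\in P_\infty$. It then invokes the eventual-constancy argument from the proof of Proposition~\ref{prop4.11}: the natural map $e_{r,X_i(r)}\to X_i$ is an isomorphism in all levels $n\geq r$, which relies on a finite presentation. From this it concludes that a finitely generated torsion object vanishes in high levels. The forward direction is simply declared clear.

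\textbf{Your route.} You work with cyclic subobjects instead. Any two surjections $C_{p^m}\twoheadrightarrow C_{p^n}$ differ by an automorphism of the source, so a torsion element generates a subobject supported on a finite interval of levels. The level filtration, with subquotients $\chi_{j,V}\cong\chi_{j,k}\otimes e_{j,V}$, then places it in $T^{A}$ by hand. This needs no noetherianity and no Serre-ideal/localizing-ideal correspondence. You also actually verify that the torsion objects form a localizing tensor ideal.

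\textbf{What each buys.} Your argument is self-contained and would adapt directly to $E_p$ and to $\mathrm{FI}$-modules, since there too any two morphisms between fixed levels differ by an automorphism. In those cases the paper instead cites \cite[Theorem B]{PS22} or omits the proof. The paper's route ties the lemma to the prime Serre ideal $P_\infty$ of $\mathrm{A}(\mathfrak{C}_p)^c$, which is the form used for the bijections in Theorem~\ref{thm4.15}.

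Part (2) is the same argument in both.
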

\begin{proof}For part (1), the forward implication is clear. Conversely, suppose $X$ is torsion and write $X$ as a filtered colimit $\operatorname{colim}_i X_i$ of its finitely generated subobjects. By \cite[Lemma 5.5]{Xu26}, it suffices to prove $X_{i}\in P_{\infty}$ for each $i$. By the argument in the proof of Proposition \ref{prop4.11}, there exists some $r_{i}\in \mathbb{N}$  such that $e_{r_{i},X_{i}(r_{i})}(n)\cong X_{i}(n)$ for every $n\geq r_{i}$. This forces $X_{i}(n)=0$ for $n\geq r_{i}$, since $X_{i}$ is torsion.  This implies that $X_{i}\in P_{\infty}$.\\
Part (2) follows directly from the definition.
\end{proof}

\begin{df} Let $S\subseteq \mathbb{N}$ be a cofinite subset. We write $$\Delta_{S}:=\mathrm{min}\{n\in \mathbb{N}\mid \{m\in \mathbb{N}: m\geq n\}\subseteq S  \}.$$
\end{df}

\begin{thm}\label{thm4.15} Let $\mathcal{L}\subseteq \mathrm{D}(\mathfrak{C}_{p})$ be a localizing ideal. Then we have the following:
 \begin{enumerate}
\item  If $\mathcal{L}\subseteq T^{D}$, then $\mathcal{L}$ is torsion, i.e., $\mathcal{L}=\mathrm{Loc}_{\otimes}\langle  \chi_{i,k}\mid i \in S(\mathcal{L})\rangle$;

\item If $\mathcal{L}\nsubseteq T^{D}$ , then $S(\mathcal{L})$ is cofinite and $\mathcal{L}=\mathrm{Loc}_{\otimes}\langle e_{\Delta_{S(\mathcal{L})}}, \chi_{i,k}\mid i \in S(\mathcal{L})\rangle$.

\end{enumerate}
 In particular, there are canonical bijections between the following six classes:

\begin{enumerate}
\item[(a)] localizing ideals of $\mathrm{D}(\mathfrak{C}_{p})$;
\item[(b)] thick ideals of $\mathrm{D}(\mathfrak{C}_{p})^{c}$;
\item[(c)] open subsets of $\mathrm{Spc}(\mathrm{D}(\mathfrak{C}_{p})^{c})$;
\item[(d)] open subsets of $\mathbb{N}^{*}$;
\item[(e)] localizing ideals of $\mathrm{A}(\mathfrak{C}_{p})$;
\item[(f)] Serre ideals of $\mathrm{A}(\mathfrak{C}_{p})^{c}$.
\end{enumerate}
\end{thm}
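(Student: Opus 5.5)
For (1) I would follow the pattern of Theorem \ref{thm3.11}. If $X\in\mathcal{L}$ and $i\in\mathrm{hsupp}(X)$, then $X\otimes\chi_{i,k}\cong\chi_{i,X(i)}\in\mathcal{L}$. Lemma \ref{lem3.1} then gives $\chi_{i,k}\in\mathcal{L}$, so $\mathrm{hsupp}(X)\subseteq S(\mathcal{L})$. For the reverse inclusion, take $X\in\mathcal{L}\subseteq T^{D}$ and show $X\in\mathrm{Loc}_{\otimes}\langle\chi_{i,k}\mid i\in S(\mathcal{L})\rangle$. Define the idempotent $r_{S}$ to be $k$ on $S=S(\mathcal{L})$ and $0$ elsewhere. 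Because $T^D$ is generated by objects supported at single groups, one wants $r_S\in\mathrm{Loc}_\otimes\langle\chi_{i,k}\mid i\in S\rangle$ and $X\simeq X\otimes r_S$. The subtlety is that $r_{S}$ need not be an object of $T^{A}$ when $S$ is infinite and has non-identity transition maps. I would therefore instead take $r_S:=\bigoplus_{i\in S}\chi_{i,k}$, or more precisely reduce to $H_*(X)$ via the fact that objects of $T^D$ have torsion homology. Concretely, $T^{D}$ equals the subcategory of complexes with torsion homology; this is the derived analogue of Lemma \ref{lem4.13}(1), proved by noting the latter is localizing and contains the $\chi_{i,k}$. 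A torsion module $M$ with $\mathrm{hsupp}\subseteq S$ has a finite filtration on each finitely generated piece with subquotients $\chi_{i,V}$, $i\in S$. Hence $M\in\mathrm{Loc}\langle\chi_{i,V}\mid i\in S\rangle$ by passing to filtered colimits (homotopy colimits). Finally, complexes with such homology are built from their homology modules via truncations and homotopy colimits.

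For (2), pick $X\in\mathcal{L}$ not in $T^{D}$, so some $H_n(X)$ is not torsion. I would show $S(\mathcal{L})$ is cofinite: by Lemma \ref{lem4.13}(2), $\mathrm{hsupp}(H_nX)$ is cofinite, hence so is $\mathrm{hsupp}(X)$, and the argument of (1) gives $\mathrm{hsupp}(X)\subseteq S(\mathcal{L})$. Set $N=\Delta_{S(\mathcal{L})}$. Next I would show $e_{N}\in\mathcal{L}$, which I expect to be the main obstacle. The plan is to produce a compact "generic" object from the non-torsion part. Consider the torsion/torsion-free triangle $\Gamma X\to X\to LX$ relative to the smashing ideal $T^{D}$; it is smashing since $T^D$ is compactly generated by the compact $\chi_{i,k}$, cf.\ Proposition \ref{Prop2.12}. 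Then $LX\otimes e_N\in\mathcal{L}$ after adding torsion pieces. I would show that $\mathrm{Loc}_\otimes\langle Y\rangle$ contains $L\mathbf 1=L e_0$ for any $Y\notin T^{D}$ (tensor-minimality of the generic quotient $\mathrm{D}(\mathfrak{C}_p)/T^{D}$). This uses that $\mathrm{A}(\mathfrak{C}_p)/T^A$ is semisimple, as it is the Ind-category of $\mathrm{A}^c/P_\infty$. Hence each nonzero object there is a sum of shifts of simples $\bar V$ with $\bar V\otimes\bar V^*\supseteq\bar k$, exactly as in Lemma \ref{lem3.1}. Then $e_N$ lies in $\mathrm{Loc}_\otimes\langle Le_0,\chi_{i,k}\mid i\ge N\rangle$, because the torsion part $\Gamma e_N$ is supported in degrees $\ge N$, all of which lie in $S(\mathcal{L})$. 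Conversely, any $Y\in\mathcal{L}$ sits in a triangle with $\Gamma Y$ and $LY$. Here $LY\in\mathrm{Loc}_\otimes\langle Le_N\rangle$ by semisimplicity of the quotient, and $\Gamma Y$ is handled by (1). This yields $\mathcal{L}=\mathrm{Loc}_\otimes\langle e_N,\chi_{i,k}\mid i\in S(\mathcal{L})\rangle$, using that $Le_N\simeq Le_0$.

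For the bijections, the invariant $\mathcal{L}\mapsto S(\mathcal{L})$ (together with whether $\mathcal{L}\subseteq T^D$) lands in subsets $U\subseteq\mathbb{N}^*$ that are either subsets of $\mathbb{N}$ or contain $\infty$ with cofinite complement. These are exactly the open subsets of $\mathbb{N}^*$, giving (a)$\simeq$(d). All generators $e_N,\chi_{i,k}$ are compact, by the proof of Proposition \ref{prop4.11}. So the usual Neeman-type correspondence with thick ideals of $\mathrm{D}(\mathfrak{C}_p)^c$ gives (b). Proposition \ref{prop4.11} identifies (c) with (d). Lemma \ref{lem4.12} gives (e)$\simeq$(f), and \cite{Xu26} with the homeomorphism $\Phi$ matches (f) with (d).
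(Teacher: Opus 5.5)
Your part (1), your proof that $S(\mathcal L)$ is cofinite, and your bookkeeping for the bijections are fine and essentially follow the paper; using Neeman--Thomason for (a)$\leftrightarrow$(b) is a harmless variant. The gap is in the step you flagged, proving $e_N\in\mathcal L$, and it also affects your converse inclusion.

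Tensor-minimality of $\mathrm{D}(\mathfrak C_p)/T^D$ gives only $\mathbf 1\in\mathrm{Loc}_\otimes\langle Y,T^D\rangle$. It does not give $L\mathbf 1\in\mathrm{Loc}_\otimes\langle Y\rangle$.

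In fact $\mathbf 1\in(T^D)^\perp$ here. The resolution $0\to e_{i+1,k}\to e_{i,k}\to\chi_{i,k}\to 0$ computes $\mathrm{RHom}(\chi_{i,k},\mathbf 1)$ as $k\xrightarrow{\mathrm{id}}k$, and the same holds for $\chi_{i,V}$. Hence $L\mathbf 1\simeq\mathbf 1$. Your claim would then give $\mathbf 1\in\mathrm{Loc}_\otimes\langle e_1\rangle$. This is false, because $\{Y\mid Y(0)\simeq 0\}$ is a localizing ideal that contains $e_1$ but not $\mathbf 1$.

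The same computation shows that $L\not\simeq -\otimes L\mathbf 1$ in this non-rigid setting: $\Gamma\mathbf 1=0$, yet $\Gamma\chi_{0,k}=\chi_{0,k}$. So ``$LX\otimes e_N$'' is not a usable substitute.

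The torsion part of $e_N$ is also not supported in degrees $\geq N$. Consider $0\to e_{N,k}\to\mathbf 1\to\mathbf 1/e_{N,k}\to 0$, where $\mathbf 1\in(T^D)^\perp$ and $\mathbf 1/e_{N,k}$ is torsion with finite support. It gives $Le_{N,k}\simeq\mathbf 1$ and $\Gamma e_{N,k}\simeq\Sigma^{-1}(\mathbf 1/e_{N,k})$. The latter lives at indices $0,\dots,N-1$, and in particular at $N-1\notin S(\mathcal L)$.

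So for $N\geq 1$, neither term of the $\Gamma\to\mathrm{id}\to L$ triangle of $e_{N,k}$ lies in $\mathcal L$; the triangle does not respect $\mathcal L$. The same example breaks your converse. For $Y\in\mathcal L$ the object $\Gamma Y$ need not lie in $\mathcal L$, so (1) cannot be applied to it. Also $Le_N\notin\mathcal L$.

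The missing idea, which is what the paper does, is to remove the torsion contribution by tensoring with a representable object rather than by applying $\Gamma$.

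1. Pick $r$ with $[r,\infty)\subseteq\mathrm{hsupp}(X)\subseteq S(\mathcal L)$.
2. From $\mathbf 1\in\mathrm{Loc}_\otimes\langle X,T^D\rangle$, tensoring with $e_r$ gives $e_r\in\mathrm{Loc}_\otimes\langle e_r\otimes X,\,e_r\otimes T^D\rangle$.
3. We have $e_r\otimes T^D\subseteq\mathrm{Loc}_\otimes\langle\chi_{i,k}\mid i\geq r\rangle\subseteq\mathcal L$, because $e_r\otimes\chi_{i,k}=0$ for $i<r$. Hence $e_r\in\mathcal L$.
4. Then $e_N\in\mathcal L$ follows from $e_r$ and the $\chi_{i,k}$ with $N\le i<r$, using $0\to e_{i+1,k}\to e_{i,k}\to\chi_{i,k}\to 0$ and $e_N\simeq e_N\otimes e_{N,k}$.

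For the reverse inclusion, use the triangle $e_{N,k}\otimes Y\to Y\to(\mathbf 1/e_{N,k})\otimes Y$ instead of the $\Gamma/L$ triangle. Its third term is torsion with homological support in $\mathrm{hsupp}(Y)\cap\{0,\dots,N-1\}\subseteq S(\mathcal L)$. It is therefore covered by the argument of (1).
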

\begin{proof}First we show that $T^{D}=\{X\in \mathrm{D}(\mathfrak{C}_{p})\mid H_{i}(X)\in T^{A}, \forall i\in \mathbb{Z}\}$. The $"\subseteq"$ containment is clear. For the reverse containment, first observe that every object of $T^{A}$, regarded as a complex concentrated in degree zero, belongs to $T^{D}$. Indeed, every torsion module $M$ is a filtered colimit of its finitely generated torsion submodules $M_{\lambda}$. Each finitely generated torsion module has finite support.  Applying Proposition \ref{prop3.2} to a finite truncation containing $\mathrm{hsupp}(M_{\lambda})$, and then extending by zero, we obtain  $$M_{\lambda}\in \mathrm{Loc}_{\otimes}\langle \chi_{i,k}\mid i\in \mathrm{hsupp}(M_{\lambda})\rangle\subseteq T^{D}.$$ Since $T^{D}$ is localizing, it follows that $M\in T^{D}$. Then by induction on $n$, each bounded truncation $\tau_{\leq n}\tau_{\geq-n}X$ lies in $T^{D}$. Since $T^{D}$ is localizing and $X$ is the homotopy colimit of $\tau_{\leq n}\tau_{\geq-n}X$, we have $X\in T^{D}$.

Suppose $\mathcal{L}\subseteq T^{D}$. It suffices to prove $X\in \mathrm{Loc}_{\otimes}\langle \chi_{i,k}\mid i\in \mathrm{hsupp}(X)\rangle$ for $X\in \mathcal{L}$. Indeed, if $X\in \mathcal{L}$, then $H_{i}(X)\in \mathrm{Loc}_{\otimes}\langle \chi_{i,k}\mid i\in \mathrm{hsupp}(X)\rangle$. By bounded truncation triangle, we have $\tau_{\leq n}\tau_{\geq-n}X\in \mathrm{Loc}_{\otimes}\langle \chi_{i,k}\mid i\in \mathrm{hsupp}(X)\rangle$.  Thus $X\in \mathrm{Loc}_{\otimes}\langle \chi_{i,k}\mid i\in \mathrm{hsupp}(X)\rangle$ since it is the homotopy colimit of $\tau_{\leq n}\tau_{\geq-n}X$. This implies that the assignment  $\mathcal{L}\mapsto S(\mathcal{L})$ gives a bijection between torsion localizing ideals of $\mathrm{D}(\mathfrak{C}_p)$ and subsets of $\mathbb{N}$.\\
Suppose now $\mathcal{L}\nsubseteq T^{D}$. Then there exists some $X\in \mathcal{L}$ such that $X\notin T^{D}$. By the derived Gabriel localization theorem, the quotient functor induces a tensor equivalence $$\mathrm{D}(\mathfrak{C}_{p})/T^{D}\simeq \mathrm{D}(\mathrm{A}(\mathfrak{C}_{p})/T^{A}).$$  There exists some $d\in \mathbb{Z}$ such that $H_{d}(X)\notin T^{A}$. By Lemma \ref{lem4.13}, $H_{d}(X)$ is not torsion and there exists some $r\in \mathbb{N}$ such that $H_{d}(X)(n)\neq 0$ for all $n\geq r$. Let $V_{n}$ denote the dual of  $H_{d}(X)(n)$. Then  $\chi_{n,k}$ is a direct summand of $H_{d}(X\otimes \chi_{n,V_{n}})$, since $$H_{d}(X\otimes \chi_{n,V_{n}})\cong H_{d}(X)\otimes \chi_{n,V_{n}}\cong \chi_{n,H_{d}(X)(n)\otimes V_{n}}.$$ On the other hand, $H_{d}(X\otimes \chi_{n,V_{n}})[-d]$ is a direct summand of $X\otimes \chi_{n,V_{n}}$, because $X\otimes \chi_{n,V_{n}}$ lies in $\mathrm{D}(\Gamma_{n})$, where every object is isomorphic to the direct sum of its homology objects since $k[\Gamma_n]$ is semisimple. Thus we have $\chi_{n,k}\in \mathrm{Loc}_{\otimes}\langle X\rangle\subseteq \mathcal{L}$ for all $n\geq r$.  Now we claim that $e_{r}\in \mathcal{L}$. Observe that each object in $\mathrm{A}(\mathfrak{C}_{p})$ is eventually constant with identities as transitions, $\mathrm{A}(\mathfrak{C}_{p})/T^{A}$ is a semisimple locally noetherian Grothendieck tensor abelian category, and for each simple object $\overline{M}\in \mathrm{A}(\mathfrak{C}_{p})/T^{A}$, the tensor unit $\overline{1}$ is a direct summand of $\overline{M\otimes M^{*}}$.   Thus the only localizing ideals of $\mathrm{D}(\mathrm{A}(\mathfrak{C}_{p})/T^{A})$ are $0$ and $\mathrm{D}(\mathrm{A}(\mathfrak{C}_{p})/T^{A})$. Consider the quotient functor $$\pi: \mathrm{D}(\mathrm{A}(\mathfrak{C}_{p}))\rightarrow \mathrm{D}(\mathrm{A}(\mathfrak{C}_{p}))/T^{D}\simeq \mathrm{D}(\mathrm{A}(\mathfrak{C}_{p})/T^{A}).$$
 Then we have $\pi^{-1}\mathrm{Loc}_{\otimes}\langle \pi(X)\rangle=\mathrm{Loc}_{\otimes}\langle X, T^{D}\rangle=\mathrm{D}(\mathrm{A}(\mathfrak{C}_{p}))$, since $\pi(X)\neq 0$ and $\mathrm{D}(\mathrm{A}(\mathfrak{C}_{p}))/T^{D}$ has only trivial localizing ideals. Therefore $1\in \mathrm{Loc}_{\otimes}\langle X, T^{D}\rangle$ and so $$e_{r}\in \mathrm{Loc}_{\otimes}\langle e_{r}\otimes X, e_{r}\otimes T^{D}\rangle\subseteq \mathrm{Loc}_{\otimes}\langle X\rangle\subseteq \mathcal{L},$$ which proves our claim. Then by the choice of $\Delta_{S(\mathcal{L})}$, we have $$\mathrm{Loc}_{\otimes}\langle e_{\Delta_{S(\mathcal{L})}}, \chi_{i,k}\mid i\in S(\mathcal{L}) \rangle\subseteq \mathcal{L}\subseteq \mathrm{Loc}_{\otimes}\langle e_{\Delta_{S(\mathcal{L})}}, \chi_{i,k}\mid i\in S(\mathcal{L}) \rangle$$ and thus $\mathcal{L}=\mathrm{Loc}_{\otimes}\langle e_{\Delta_{S(\mathcal{L})}}, \chi_{i,k}\mid i \in S(\mathcal{L})\rangle$, which proves (2).\\
Note that a subset of $N^{*}$ is open precisely when it avoids $\infty$, or contains $\infty$ together with all but finitely many natural numbers. Moreover, in this space, the Thomason subsets are precisely the open subsets. It follows from (1) and (2) that the homological support induces a bijection between (a) and (d). The bijections between (b),(c) and (d) come from Proposition \ref{prop4.11} and \cite[Theorem 4.10]{Bal05}. The bijection between (d) and (f) comes from \cite[Corollary 2.8]{BKS} and \cite[Theorem 5.7]{Xu26}. The bijection between (e) and (f) follows from Lemma \ref{lem4.12}. This concludes our proof.
\end{proof}

\begin{cor}\label{cor4.16}The telescope conjecture holds for $\mathrm{D}(\mathfrak{C}_{p})$.
\end{cor}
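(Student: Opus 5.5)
The plan is to show directly that every localizing ideal of $\mathrm{D}(\mathfrak{C}_{p})$ is generated by compact objects; in particular every smashing ideal is. Let $\mathcal{L}$ be a localizing ideal of $\mathrm{D}(\mathfrak{C}_{p})$; whether or not it is smashing plays no role. By Theorem \ref{thm4.15}, exactly one of two cases occurs, and in each case $\mathcal{L}$ is generated as a localizing ideal by an explicit set of objects.

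In case (1), $\mathcal{L}\subseteq T^{D}$, so
\[
\mathcal{L}=\mathrm{Loc}_{\otimes}\langle \chi_{i,k}\mid i\in S(\mathcal{L})\rangle .
\]
In case (2), $S(\mathcal{L})$ is cofinite and
\[
\mathcal{L}=\mathrm{Loc}_{\otimes}\langle e_{\Delta_{S(\mathcal{L})}},\ \chi_{i,k}\mid i\in S(\mathcal{L})\rangle .
\]
Hence it suffices to check that all of these generators are compact. The object $e_{n}$ is compact for every $n$ by the proposition following Hypothesis \ref{hyp2.3}, since $\mathfrak{C}_{p}$ satisfies that hypothesis by \cite{BBP+25a}. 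The object $\chi_{n,k}$ is compact by the short exact sequence $0\to e_{n+1,k}\to e_{n,k}\to\chi_{n,k}\to 0$ used in the proof of Proposition \ref{prop4.11}, together with the compactness of the $e_{n,k}$, which are summands of $e_{n}$. Alternatively, compactness of $\chi_{n,k}$ follows directly from the equality $\mathrm{D}(\mathfrak{C}_{p})^{c}=\mathrm{D}^{b}_{\mathrm{fg}}(\mathfrak{C}_{p})$ in Proposition \ref{prop4.11}.

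It follows that every localizing ideal is compactly generated as a localizing ideal. By Proposition \ref{Prop2.12}(2), which applies because $\mathrm{D}(\mathfrak{C}_{p})^{c}$ is symmetric monoidal, the subcategory $\mathrm{Loc}_{\otimes}\langle\mathcal{C}\rangle$ equals $\mathrm{Loc}\langle \mathcal{C},\mathcal{C}\otimes\mathrm{D}(\mathfrak{C}_{p})^{c}\rangle$ for any set $\mathcal{C}$ of compact objects. Therefore every localizing ideal is also compactly generated as a localizing subcategory, and is in particular smashing. Specializing to smashing ideals, each one is compactly generated, which is exactly the telescope conjecture in the sense of the definition in the preliminaries.

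There is essentially no obstacle here: all the real work lies in Theorem \ref{thm4.15}. The one point to state carefully is the compactness of $\chi_{n,k}$ and $e_{n}$, and both facts are already established earlier in the paper.
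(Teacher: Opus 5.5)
Your proposal is correct and is essentially the paper's argument. Theorem~\ref{thm4.15} writes every localizing ideal as $\mathrm{Loc}_{\otimes}$ of the $\chi_{i,k}$, with $e_{\Delta_{S(\mathcal{L})}}$ added in the non-torsion case, and these generators are compact (Proposition~\ref{prop4.11}), so every localizing ideal, and in particular every smashing one, is compactly generated. In the torsion case you state compact generation directly, whereas the paper only remarks via Proposition~\ref{Prop2.12} that such an ideal is smashing; your version of that step is slightly cleaner.
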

\begin{proof}Let $\mathcal{L}$ be a non-zero localizing ideal of $\mathrm{D}(\mathfrak{C}_{p})$. Then it follows from Theorem \ref{thm4.15} that  $\mathcal{L}$ is torsion or of the form $\mathcal{L}=\mathrm{Loc}_{\otimes}\langle e_{\Delta_{S(\mathcal{L})}}, \chi_{i,k}\mid i \in S(\mathcal{L})\rangle$. If $\mathcal{L}$ is torsion, then by Proposition \ref{Prop2.12}, it is smashing, since $\chi_{i,k}$ is compact for all $i\in \mathbb{N}$.\\
If $\mathcal{L}=\mathrm{Loc}_{\otimes}\langle e_{\Delta_{S(\mathcal{L})}}, \chi_{i,k}\mid i \in S(\mathcal{L})\rangle$, then $\mathcal{L}$ is compactly generated since $e_{n}$ and $\chi_{i,k}$ are compact for all $i,n\in\mathbb{N}$. This concludes the proof.
\end{proof}

\section{\bf Global representations of  $E_{p}$}
In this section, we study the global representations of $E_{p}$, the family of elementary abelian $p$-groups. Although $E_p$ and $\mathfrak{C}_p$ have the same tensor abelian geometry (see \cite[Corollary 5.8]{Xu26}), their tensor triangular geometries are different. A key reason for this difference is that the objects $\chi_{(\mathbb{Z}/p)^{n},k}$ for $n\in \mathbb{N}$ are not  compact in $\mathrm{D}(E_{p})$. This framework is also closely related to the theory of $\mathrm{VI}$-modules. Indeed, Pontryagin duality induces an equivalence between the category of $\mathrm{VI}$-modules (over $k$) and $\mathrm{A}(E_{p})$. We prove that the telescope conjecture holds for $\mathrm{D}(E_{p})$, and hence  for the derived category of $\mathrm{VI}$-modules.

\begin{nota} For simplicity, we write $e_{n,V}$ for $e_{(Z/p)^{n},V}$,  ~$X(n)$ for $X((\mathbb{Z}/p)^{n})$ and so on. We write $i_{n}:\{0,1,\cdots,n-1\}\hookrightarrow \mathbb{N}$ for the inclusion of the down-closed subcategory of $E_{p}$ and $j_{n}:\{n,n+1,\cdots\}\hookrightarrow \mathbb{N}$ for the inclusion of its complement.
We set $$T^{D}:=\mathrm{Loc}_{\otimes}\langle \chi_{i,k}\mid i\in \mathbb{N}\rangle\subseteq \mathrm{D}(E_{p})$$ and $$T^{A}:=\mathrm{Loc}_{\otimes}\langle \chi_{i,k}\mid i\in \mathbb{N}\rangle\subseteq \mathrm{A}(E_{p}).$$
\end{nota}

\begin{lem}\label{lem5.2} Let $X\in \mathrm{A}(E_{p})$. Then we have the following:
\begin{enumerate}
\item $X\in T^{A}$ if and only if $X$ is torsion;

\item If $X$ is not torsion, then $\mathrm{hsupp}(X)$ is cofinite.

\end{enumerate}
\end{lem}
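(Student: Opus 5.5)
Part (1) will follow the proof of Lemma \ref{lem4.13}. The forward implication is formal. Each $\chi_{i,k}$ is torsion: an element of $\chi_{i,k}(i)$ pulls back to zero along any surjection $(\mathbb{Z}/p)^{m}\twoheadrightarrow(\mathbb{Z}/p)^{i}$ with $m>i$. Torsion objects form a Serre subcategory of $\mathrm{A}(E_{p})$ closed under coproducts. This holds because the condition is elementwise and $\alpha^{*}$ is natural. The class is also closed under tensoring with arbitrary objects, since $\alpha^{*}(x\otimes y)=\alpha^{*}x\otimes\alpha^{*}y$. Hence $T^{A}$ consists of torsion objects.

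For the converse, let $X$ be torsion and write $X=\operatorname{colim}_{i}X_{i}$ as the filtered colimit of its finitely generated subobjects. This is possible because $E_{p}$ is noetherian (Example \ref{ex4.5}). Each $X_{i}$ is finitely generated and torsion. I claim that $\mathrm{hsupp}(X_{i})$ is finite. Pick generators $x_{1},\dots,x_{s}$ with $x_{j}\in X_{i}(n_{j})$. Each $x_{j}$ is killed by some $\alpha_{j}^{*}$ with $\alpha_{j}:(\mathbb{Z}/p)^{m_{j}}\twoheadrightarrow(\mathbb{Z}/p)^{n_{j}}$. Any surjection $\beta:(\mathbb{Z}/p)^{m}\twoheadrightarrow(\mathbb{Z}/p)^{n_{j}}$ with $m\ge m_{j}$ factors as $\alpha_{j}\circ\gamma$, because all surjections $(\mathbb{Z}/p)^{m_j}\twoheadrightarrow(\mathbb{Z}/p)^{n_j}$ are related by automorphisms and any surjection onto $(\mathbb{Z}/p)^{n_j}$ factors through a suitable one. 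Hence $\beta^{*}x_{j}=\gamma^{*}\alpha_{j}^{*}x_{j}=0$. Since $X_{i}(m)$ is spanned by the elements $\beta^{*}x_{j}$, it follows that $X_{i}(m)=0$ for $m\ge\max_{j} m_{j}$. Thus $X_{i}$ lies in $P_{\infty}$, the finite-support prime Serre ideal. The finite-support objects lie in $T^{A}$: for a finitely generated object with finite support, a finite filtration by the pieces at each level $n$ has subquotients $\chi_{n,V}$, and $\chi_{n,V}\cong\chi_{n,k}\otimes e_{n,V}$ lies in $T^{A}$ by the abelian analogue of Lemma \ref{lem3.1}. Alternatively, I can invoke \cite[Lemma 5.5]{Xu26} exactly as in Lemma \ref{lem4.13}. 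Since $T^{A}$ is closed under colimits, $X\in T^{A}$.

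For part (2), suppose $X$ is not torsion, so there is a torsion-free element $x\in X(n)$. Then $\alpha^{*}x\neq0$ for every surjection $\alpha:(\mathbb{Z}/p)^{m}\twoheadrightarrow(\mathbb{Z}/p)^{n}$. Such surjections exist for every $m\ge n$, so $X(m)\neq0$ for all $m\ge n$, and $\mathrm{hsupp}(X)\supseteq\{m\ge n\}$ is cofinite.

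The main obstacle is the factorization step for surjections of elementary abelian groups, which is what makes a single annihilating surjection kill an element at all higher levels. It holds because $\mathrm{GL}$ acts transitively on the surjections $(\mathbb{Z}/p)^{m}\twoheadrightarrow(\mathbb{Z}/p)^{n}$, and any such surjection with $m\ge m_{j}$ factors through a surjection onto $(\mathbb{Z}/p)^{m_{j}}$.
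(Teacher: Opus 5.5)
Your argument is correct and has the same skeleton as the paper's proof: write $X$ as the directed union of its finitely generated subobjects, show each $X_{i}$ vanishes in high rank, and conclude $X_{i}\in T^{A}$. The difference is in how the key step is done.

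The paper obtains $X_{i}(n)=0$ for $n\gg 0$ by citing the structure theorem \cite[Theorem B]{PS22}. It then passes to $T^{A}$ through $P_{\infty}$ and \cite[Lemma 5.5]{Xu26}.

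You prove the vanishing directly. For $m\geq m_{j}$, every surjection $(\mathbb{Z}/p)^{m}\twoheadrightarrow(\mathbb{Z}/p)^{n_{j}}$ factors through the annihilating surjection $\alpha_{j}$: split off the kernel, then use transitivity of $\mathrm{GL}_{m_{j}}(\mathbb{F}_{p})$ on surjections. You also replace the Gabriel-type lemma by the explicit rank filtration, whose subquotients are $\chi_{n,V}\cong\chi_{n,k}\otimes e_{n,V}$, together with closure of $T^{A}$ under colimits. This makes the lemma self-contained and isolates the one property of $E_{p}$ actually used. The paper's route is shorter but imports a much stronger result than is needed here.

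One small repair is needed in the forward direction. Saying the condition is ``elementwise and natural'' does not show that a sum of torsion elements is torsion. You need this for coproducts, and for tensor products, since elements of $X(G)\otimes Y(G)$ are sums of simple tensors. What is required is a single surjection killing several elements at once. Your factorization step supplies exactly this, so invoke it there as well.

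Also, the directed-union description of $X$ holds in any $\mathrm{A}(\mathcal{U})$, because the $e_{G}$ are finitely generated generators; noetherianity is not needed for that step.
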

\begin{proof}For part (1), the forward implication is clear. Conversely, suppose $X$ is torsion and write $X$ as a filtered colimit $\operatorname{colim}_i X_i$ of its finitely generated subobjects. By \cite[Lemma 5.5]{Xu26}, it suffices to prove $X_{i}\in P_{\infty}:=\mathrm{Serre}_{\otimes}\langle \chi_{i,k}\mid i\in \mathbb{N}\rangle$ for each $i$. It follows from \cite[Theorem B]{PS22} that there exists some $r_{i}\in \mathbb{N}$  such that  $X_{i}(n)=0$ for $n\geq r_{i}$, since $X_{i}$ is torsion.  This implies that $X_{i}\in P_{\infty}$.\\
Part (2) follows from the definition directly.
\end{proof}

Recall that the Balmer spectrum of $\mathrm{D}(E_{p})^{c}$ is homeomorphic to the Hochster dual of $\mathrm{Spec}(\mathbb{Z})$ and the tensor abelian spectrum of $\mathrm{A}(E_{p})^{c}$ is $\mathbb{N}^{*}$.  Compare the  following result  with  Theorem \ref{thm4.15}.
\begin{thm} \label{thm5.3}Let $\mathcal{L}\subseteq \mathrm{D}(E_{p})$ be a localizing ideal. Then we have the following:
 \begin{enumerate}
\item  If $\mathcal{L}\subseteq T^{D}$, then $\mathcal{L}$ is torsion;

\item If $\mathcal{L}\nsubseteq T^{D}$ , then $S(\mathcal{L})$ is cofinite and $\mathcal{L}=\mathrm{Loc}_{\otimes}\langle e_{\Delta_{S(\mathcal{L})}}, \chi_{i,k}\mid i \in S(\mathcal{L})\rangle$.

\end{enumerate}
 In particular, there are canonical bijections between the following five classes:

\begin{enumerate}
\item[(a)] localizing ideals of $\mathrm{D}(E_{p})$;
\item[(b)] open subsets of $\mathrm{Spc}(\mathrm{A}(E_{p})^{c})$;
\item[(c)] open subsets of $\mathbb{N}^{*}$;
\item[(d)] localizing ideals of $\mathrm{A}(E_{p})$;
\item[(e)] Serre ideals of $\mathrm{A}(E_{p})^{c}$.
\end{enumerate}
\end{thm}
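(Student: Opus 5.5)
The plan is to transplant the proof of Theorem \ref{thm4.15} to $E_p$, checking at each step that only facts valid for $E_p$ are used. The one real difference is that $\chi_{i,k}$ is no longer compact (Example \ref{ex4.5}). This does not matter for a classification of localizing ideals, but it explains why the list omits the thick ideals of $\mathrm{D}(E_p)^c$ and uses $\mathrm{Spc}(\mathrm{A}(E_p)^c)\simeq\mathbb{N}^*$ (\cite[Corollary 5.8]{Xu26}) in place of the Balmer spectrum.

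\emph{Step 1: describe $T^D$.} First I show $T^{D}=\{X\mid H_i(X)\in T^A\ \forall i\}$. Take a torsion object $M$ and write it as a filtered colimit of finitely generated torsion subobjects $M_\lambda$. By \cite[Theorem B]{PS22}, as used in Lemma \ref{lem5.2}, each $M_\lambda$ has finite support contained in some $\{0,\dots,n-1\}$. Such an object is $i_{n!}$ (extension by zero) of an object of $\mathrm{D}(\{0,\dots,n-1\})$. Proposition \ref{prop3.2}, applied to this essentially finite family and pushed forward along the coproduct-preserving functor $i_{n!}$, gives
\[
M_\lambda\in\mathrm{Loc}_\otimes\langle\chi_{i,k}\mid i\in\mathrm{hsupp}(M_\lambda)\rangle.
\]
To use this I must check that $i_{n!}$ sends $\chi_{i,k}$ to $\chi_{i,k}$ and is compatible with ideals. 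The support objects satisfy $\chi_{i}\otimes Y\cong\chi_{i,Y(i)}$, so the required ideal closure can be verified directly. Truncations and homotopy colimits then handle arbitrary complexes.

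\emph{Step 2: case (1).} Suppose $\mathcal{L}\subseteq T^D$ and $X\in\mathcal{L}$. Each $i\in\mathrm{hsupp}(X)$ lies in $S(\mathcal{L})$, by the $\chi_{i,X(i)}$ trick used in Theorem \ref{thm3.11}; this needs Lemma \ref{lem3.1} and semisimplicity of $k[\mathrm{GL}_i(\mathbb{F}_p)]$, which holds in characteristic $0$. The argument of Step 1, now applied to each $H_i(X)$, places $X$ in $\mathrm{Loc}_\otimes\langle\chi_{i,k}\mid i\in\mathrm{hsupp}(X)\rangle$. Hence $\mathcal{L}$ is torsion.

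\emph{Step 3: case (2).} Suppose some $X\in\mathcal{L}$ has $H_d(X)$ not torsion. By Lemma \ref{lem5.2}(2), $H_d(X)(n)\neq0$ for all $n\ge r$. Tensoring $X$ with $\chi_{n,V_n}$ for suitable duals $V_n$, as in Theorem \ref{thm4.15}, gives $\chi_{n,k}\in\mathcal{L}$ for $n\ge r$, so $S(\mathcal{L})$ is cofinite. Next I use the derived Gabriel equivalence $\mathrm{D}(E_p)/T^D\simeq\mathrm{D}(\mathrm{A}(E_p)/T^A)$, which needs Step 1 and the fact that $T^A$ is a localizing subcategory of the locally noetherian $\mathrm{A}(E_p)$.

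\emph{Main obstacle.} The key claim is that $\mathrm{D}(\mathrm{A}(E_p)/T^A)$ has only the trivial localizing ideals. For $\mathfrak{C}_p$ this came from eventual constancy. For $E_p$ I plan to use that $\mathrm{Spc}(\mathrm{A}(E_p)^c)\cong\mathbb{N}^*$ with generic quotient $\mathrm{A}(E_p)^c/P_\infty$. The aim is to show every nonzero object $\overline{M}$ of $\mathrm{A}(E_p)/T^A$ has $\overline{1}$ in the Serre ideal it generates. The reason is that $P_\infty$ is the unique minimal-support prime, that is, a maximal Serre prime containing every proper ideal. Given this, every nonzero localizing ideal of the abelian quotient is everything by Lemma \ref{lem4.12}. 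To pass to the derived category I would use the same truncation and colimit argument, or the Step 3 tensoring trick applied to $H_d$.

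With that in hand, $1\in\mathrm{Loc}_\otimes\langle X,T^D\rangle$. Tensoring with $e_r$ kills the part of $T^D$ not yet known to lie in $\mathcal{L}$, namely the $\chi_i$ with $i<r$, since $e_r(i)=0$ for $i<r$. This gives $e_r\in\mathcal{L}$, and hence $e_{\Delta_{S(\mathcal{L})}}\in\mathcal{L}$. For the reverse inclusion, take $Y\in\mathcal{L}$ and the triangle $e_\Delta\otimes Y\to Y\to C$ coming from the sequence $F\to e_\Delta\to 1$ of \cite[Lemma 2.16]{BBP+25b}, where $F$ is supported in degrees below $\Delta$; equivalently, use the recollement $(i_\Delta,j_\Delta)$. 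Then $C$ is supported in $\{0,\dots,\Delta-1\}$, so $C\in T^D$, and case (1) puts $C$ in $\mathrm{Loc}_\otimes\langle\chi_{i,k}\mid i\in S(\mathcal{L})\rangle$. This proves (2).

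\emph{Step 4: bijections.} Sending $\mathcal{L}$ to $S(\mathcal{L})$, with $\infty$ adjoined when $\mathcal{L}\nsubseteq T^D$, gives open subsets of $\mathbb{N}^*$, and this yields (a)$\leftrightarrow$(c). The correspondence (b)$\leftrightarrow$(c) is \cite[Corollary 5.8]{Xu26}, (c)$\leftrightarrow$(e) is \cite[Corollary 2.8]{BKS} together with \cite{Xu26}, and (d)$\leftrightarrow$(e) is Lemma \ref{lem4.12}.
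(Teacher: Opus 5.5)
\textbf{The gap.} Your Steps 1, 2 and 4, the proof that $S(\mathcal{L})$ is cofinite, and the final tensoring with $e_r$ are fine. The step you flag as the main obstacle, however, is a genuine gap.

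The abelian half is true, but not for the reason you give. $P_\infty$ does not contain every proper Serre ideal: $\mathrm{Serre}_\otimes\langle e_1\rangle$ is proper, since all its objects vanish at the trivial group, yet it contains an object of infinite support. What does hold is that $P_\infty$ is maximal among proper Serre ideals, which follows from the classification by open subsets of $\mathbb{N}^*$.

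The real problem is the passage from $\mathrm{A}(E_p)/T^A$ to $\mathrm{D}(\mathrm{A}(E_p)/T^A)$. In Theorem~\ref{thm4.15} this step uses that $\mathrm{A}(\mathfrak{C}_p)/T^A$ is semisimple, so every complex splits as the sum of its shifted homology. For $E_p$ the generic quotient is not semisimple. The augmentation $e_1\to 1$ has torsion cokernel $\chi_{0,k}$, so $\overline{e_1}\to\overline{1}$ is an epimorphism. But $e_1$ is torsion-free and injective, so $\mathrm{Hom}(\overline{1},\overline{e_1})=\mathrm{Hom}(1,e_1)=e_1(0)=0$, and the epimorphism cannot split.

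Neither of your suggested repairs works:
\begin{enumerate}
\item Truncations and homotopy colimits show that $X$ lies in a localizing ideal once its homology does. That is the wrong direction.
\item Tensoring with $\chi_{n,V_n}$ uses objects that become zero in the generic quotient.
\end{enumerate}
Triviality at the abelian level is also not formally enough. In $\mathrm{Mod}\,\mathbb{Z}$ the localizing subcategory generated by $\mathbb{Q}$ is everything, whereas $\mathrm{Loc}_\otimes\langle\mathbb{Q}\rangle\subsetneq\mathrm{D}(\mathbb{Z})$.

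\textbf{The missing idea.} The paper avoids the generic quotient altogether. A torsion-free element $y\in H_d(Y)(m)$ gives a split monomorphism $e_m\rightarrowtail e_m\otimes H_d(Y)$ by \cite[Lemma 7.10]{BBP+25a}. Since $e_m$ is both projective and injective in $\mathrm{A}(E_p)$, maps $\Sigma^d e_m\to e_m\otimes Y$ and $e_m\otimes Y\to\Sigma^d e_m$ are computed on $H_d$. So the splitting lifts, and $\Sigma^d e_m$ is a retract of $e_m\otimes Y$. Hence $e_m\in\mathcal{L}$ directly. With $\Delta=\Delta_{S(\mathcal{L})}$, the recollement triangle $(j_m)_!(j_m)^*e_\Delta\to e_\Delta\to(i_m)_*(i_m)^*e_\Delta$ then gives $e_\Delta\in\mathcal{L}$.

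\textbf{Smaller slips.}
\begin{enumerate}
\item For $\Delta\geq 1$ there is no triangle $F\to e_\Delta\to 1$ with $F$ supported below $\Delta$, because $e_\Delta(n)$ has dimension greater than one for every $n\geq\Delta$. For the reverse inclusion you must use the recollement triangle $(j_\Delta)_!(j_\Delta)^*Y\to Y\to(i_\Delta)_*(i_\Delta)^*Y$. You then need to justify $(j_\Delta)_!(j_\Delta)^*Y\in\mathrm{Loc}_\otimes\langle e_\Delta\rangle$; the paper gets this from \cite[Lemma 5.5]{BBP+25b} and \cite[Remark 4.12]{PS22}.
\item Extension by zero from the down-closed piece $\{0,\dots,n-1\}$ is $(i_n)_*$, not $(i_n)_!$.
\end{enumerate}
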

\begin{proof}The proof of part (1) is the same as in Theorem \ref{thm4.15}.  Suppose $\mathcal{L}\nsubseteq T^{D}$. Note that $T^{D}=\{X\in \mathrm{D}(E_{p})\mid H_{i}(X)\in T^{A}, \forall i\in \mathbb{Z}\}$. Then there exist an object $Y\in \mathcal{L}\setminus T^{D}$ and some $d\in \mathbb{Z}$ such that $H_{d}(Y)$ is not torsion. This means that there exist $m\in \mathbb{N}$ and  $y\in H_{d}(Y)(m)$ such that $y$ is a torsion-free element. By \cite[Lemma 7.10]{BBP+25a}, $y$ gives rise to a split monomorphism $e_{m}\rightarrowtail e_{m}\otimes H_{d}(Y)$. On the other hand, as $e_{m}$ is both injective and projective, we have $$\mathrm{Hom}_{\mathrm{D}(E_{p})}(\Sigma^{n}e_{m},e_{m}\otimes Y)\cong \mathrm{Hom}_{\mathrm{A}(E_{p})}(e_{m},H_{n}(e_{m}\otimes Y))$$ and $$\mathrm{Hom}_{\mathrm{D}(E_{p})}(e_{m}\otimes Y,\Sigma^{n}e_{m})\cong \mathrm{Hom}_{\mathrm{A}(E_{p})}(H_{n}(e_{m}\otimes Y),e_{m}).$$ Then we have $e_{m}\in \mathrm{Loc}_{\otimes}\langle e_{m}\otimes Y\rangle\subseteq \mathrm{Loc}_{\otimes}\langle Y\rangle\subseteq \mathcal{L}$, since $e_{m}$ is a retract of the object $H_{n}(e_{m}\otimes Y)\cong e_{m}\otimes H_{n}(Y)$. By \cite[Remark 2.19]{BBP+25b}, the inclusions $i_{m}$ and $j_{m}$ induce a triangle $$(j_{m})_{!}(j_{m})^{*}e_{\Delta_{S(\mathcal{L})}}\rightarrow e_{\Delta_{S(\mathcal{L})}}\rightarrow (i_{m})_{*}(i_{m})^{*}e_{\Delta_{S(\mathcal{L})}}$$ in $\mathrm{D}(E_{p})$. According to \cite[Lemma 4.8]{BBP+25b}, we have $$\mathrm{hsupp}((j_{m})_{!}(j_{m})^{*}e_{\Delta_{S(\mathcal{L})}})=j_{m}(\mathrm{hsupp}((j_{m})^{*}e_{\Delta_{S(\mathcal{L})}}))=j_{m}(j_{m}^{-1}\mathrm{hsupp}(e_{\Delta_{S(\mathcal{L})}}))=\{m,m+1,\cdots\}.$$It then follows from \cite[Lemma 5.5]{BBP+25b} that $(j_{m})_{!}(j_{m})^{*}e_{\Delta_{S(\mathcal{L})}}\in \mathrm{Loc}_{\otimes}\langle e_{m}\rangle\subseteq \mathcal{L}$. On the other hand, $$\mathrm{hsupp}((i_{m})_{*}(i_{m})^{*}e_{\Delta_{S(\mathcal{L})}})=i_{m}(\mathrm{hsupp}((i_{m})^{*}e_{\Delta_{S(\mathcal{L})}}))=i_{m}(i_{m}^{-1}\mathrm{hsupp}(e_{\Delta_{S(\mathcal{L})}}))=\{\Delta_{S(\mathcal{L})},\cdots,m-1\}.$$
Hence $(i_{m})_{*}(i_{m})^{*}e_{\Delta_{S(\mathcal{L})}}\in \mathrm{Loc}_{\otimes}\langle \chi_{i,k}\mid i\in S(\mathcal{L})\rangle\subseteq \mathcal{L}$ and so $e_{\Delta_{S(\mathcal{L})}}\in \mathcal{L}$. This proves the $"\supseteq"$ containment.\\
For the reverse containment, let $Z\in \mathcal{L}$. If $Z$ is torsion, then $Z$ clearly lies in the desired subcategory. Suppose $Z$ is not torsion. Then $\mathrm{hsupp}(Z)$ is cofinite and we write $\Delta_{Z}$ for $\Delta_{\mathrm{hsupp}(Z)}$. By the choice of $\Delta_{S(\mathcal{L})}$, we have $\Delta_{S(\mathcal{L})}\leq \Delta_{Z}$. Consider the triangle $$(j_{\Delta_{Z}})_{!}(j_{\Delta_{Z}})^{*}Z\rightarrow Z\rightarrow (i_{\Delta_{Z}})_{*}(i_{\Delta_{Z}})^{*}Z$$ in $\mathrm{D}(E_{p})$. Then by an argument similar to the one above, we have $(j_{\Delta_{Z}})_{!}(j_{\Delta_{Z}})^{*}Z\in \mathrm{Loc}_{\otimes}\langle e_{\Delta_{Z}}\rangle$ and $(i_{\Delta_{Z}})_{*}(i_{\Delta_{Z}})^{*}Z\in \mathrm{Loc}_{\otimes}\langle e_{\Delta_{S(\mathcal{L})}}, \chi_{i,k}\mid i \in S(\mathcal{L})\rangle$. Moreover, $e_{\Delta_{Z}}\in \mathrm{Loc}_{\otimes}\langle e_{\Delta_{S(\mathcal{L})}}\rangle$, by \cite[Remark 4.12]{PS22}. This implies that $Z\in \mathrm{Loc}_{\otimes}\langle e_{\Delta_{S(\mathcal{L})}}, \chi_{i,k}\mid i \in S(\mathcal{L})\rangle$, which proves the reverse containment.\\
Finally, it follows from (1) and (2) that the homological support induces a bijection between (a) and (c). The bijections between (b), (c) and (d) come from  \cite[Corollary 2.8]{BKS} and \cite[Corollary 5.8]{Xu26}. The bijection between (d) and (e) follows from Lemma \ref{lem4.12}. This concludes our proof.
\end{proof}

The following result is taken from the proof of \cite[Theorem 6.13]{BBP+25b}.
\begin{prop}\label{prop5.4}For all $X,Y\in \mathrm{D}(E_{p})^{c}$, $\mathrm{hsupp}(X)\subseteq \mathrm{hsupp}(Y)$ if and only if $X\in \mathrm{thick}_{\otimes}\langle Y\rangle$.
\end{prop}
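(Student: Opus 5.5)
The implication from $X\in\mathrm{thick}_{\otimes}\langle Y\rangle$ to $\mathrm{hsupp}(X)\subseteq\mathrm{hsupp}(Y)$ is formal. The subcategory $\{Z\in \mathrm{D}(E_{p})^{c}\mid \mathrm{hsupp}(Z)\subseteq \mathrm{hsupp}(Y)\}$ is a thick tensor ideal. Indeed, evaluation at each group is exact and monoidal, so $(Z\otimes W)(n)\simeq Z(n)\otimes W(n)$, and vanishing at $n$ is preserved by cones, retracts and tensoring. It contains $Y$, hence contains $\mathrm{thick}_{\otimes}\langle Y\rangle$.

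For the converse, I would first describe the possible homological supports of compact objects. Every compact object lies in $\mathrm{thick}\langle e_{n}\mid n\in\mathbb{N}\rangle$. Each $e_{n}$ has support $\{n,n+1,\dots\}$, and these are up-closed. By the Künneth/evaluation argument and the fact that compacts are built from finitely many $e_{n}$, one checks that $\mathrm{hsupp}(Y)$ is either empty or cofinite. It cannot be a nonempty finite set, since finite-support objects are built from the $\chi_{n,k}$, and these are not compact by Example \ref{ex4.5}. To make this rigorous, I would use the fact that the non-torsion part of a compact object is eventually constant, as in Lemma \ref{lem5.2} and \cite[Theorem B]{PS22}. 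This gives two cases. If $\mathrm{hsupp}(X)=\emptyset$, then $X\simeq0$ and there is nothing to prove. Otherwise $\mathrm{hsupp}(Y)\supseteq \mathrm{hsupp}(X)\neq\emptyset$.

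The key step is to show $\mathrm{thick}_{\otimes}\langle Y\rangle=\mathrm{thick}_{\otimes}\langle e_{\Delta}\rangle$ for a suitable $\Delta$ depending only on $\mathrm{hsupp}(Y)$. I would then show that $\mathrm{hsupp}(Y)$ is in fact up-closed, equal to $\{\Delta_Y,\Delta_Y+1,\dots\}$. Given this, it suffices to prove $e_{\Delta_Y}\in\mathrm{thick}_{\otimes}\langle Y\rangle$ and $Y\in\mathrm{thick}_{\otimes}\langle e_{\Delta_Y}\rangle$. Then $X\in\mathrm{thick}_{\otimes}\langle e_{\Delta_X}\rangle\subseteq\mathrm{thick}_{\otimes}\langle e_{\Delta_Y}\rangle$, where the last inclusion holds because $\Delta_Y\le\Delta_X$ and $e_{\Delta_X}$ is a retract of $e_{\Delta_X}\otimes e_{\Delta_Y}$ (the finite analogue of \cite[Remark 4.12]{PS22}).

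To get $e_{m}\in\mathrm{thick}_{\otimes}\langle Y\rangle$ for $m=\Delta_Y$, I would copy the argument in Theorem \ref{thm5.3}. The lowest group $m$ in the support carries a torsion-free class $y\in H_d(Y)(m)$, because torsion elements must die at larger $n$ while the support stays cofinite and up-closed. By \cite[Lemma 7.10]{BBP+25a}, $y$ gives a split monomorphism $e_{m}\to e_{m}\otimes H_d(Y)$. Since $e_{m}$ is both injective and projective, this lifts to make $e_m$ a retract of $\Sigma^{-d}e_{m}\otimes Y$, and retracts of compact objects stay in the thick ideal. For $Y\in\mathrm{thick}_{\otimes}\langle e_m\rangle$, I would use the recollement triangle $(j_m)_!j_m^*Y\to Y\to (i_m)_*i_m^*Y$. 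The third term vanishes because its support is $\mathrm{hsupp}(Y)\cap\{0,\dots,m-1\}=\emptyset$. The first term lies in $\mathrm{thick}_{\otimes}\langle e_m\rangle$ because $(j_m)_!$ of a compact object is built from $e_n$ with $n\ge m$, and each such $e_n$ is a retract of $e_n\otimes e_m$.

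I expect the main obstacle to be the structural claim that homological supports of compact objects in $\mathrm{D}(E_p)$ are exactly the up-closed sets $\{\Delta,\Delta+1,\dots\}$. In particular, this requires ruling out gaps below the eventual range. I would attack it by showing that the natural map $Y(n)\to Y(n+1)$ is injective on homology for compact $Y$. The approach is to reduce via the thick-subcategory closure to the generators $e_n$, where restriction along surjections is injective (split by averaging in characteristic zero). Alternatively, I would fall back on the computation of $\mathrm{Spc}(\mathrm{D}(E_p)^c)$ in \cite{BBP+25b}.
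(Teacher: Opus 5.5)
There is a genuine gap. Your converse rests on the claim that every nonzero compact object of $\mathrm{D}(E_p)$ has homological support of the form $\{\Delta,\Delta+1,\dots\}$, and this claim is false.

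Take $m_{1,k}=\mathrm{cof}(e_{1,k}\to 1)$, the object used in the proof of Corollary~\ref{cor5.5}. At level $n$, the map $e_{1,k}(n)\to k$ is the augmentation on the $k$-span of the index-$p$ subgroups of $(\mathbb{Z}/p)^n$. It is $0\to k$ for $n=0$, an isomorphism for $n=1$, and a surjection with nonzero kernel for $n\ge 2$. Hence $\mathrm{hsupp}(m_{1,k})=\mathbb{N}\setminus\{1\}$, and likewise $\mathrm{hsupp}(m_{f,k})=\mathbb{N}\setminus\{f\}$ for every $f\ge 1$. This is why the spectrum is the Hochster dual of $\mathrm{Spec}(\mathbb{Z})$ rather than a chain, so falling back on \cite{BBP+25b} cannot rescue the claim.

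Consequently $\mathrm{thick}_{\otimes}\langle Y\rangle$ need not be of the form $\mathrm{thick}_{\otimes}\langle e_\Delta\rangle$. The unit is not in $\mathrm{thick}_{\otimes}\langle m_{1,k}\rangle$, since $m_{1,k}(1)\simeq 0$. And $m_{1,k}\notin\mathrm{thick}_{\otimes}\langle e_2\rangle$, since $m_{1,k}(0)\not\simeq 0$.

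The same example refutes your two supporting sub-claims. First, the structure map $H_*(m_{1,k})(0)=k\to H_*(m_{1,k})(1)=0$ is not injective. Injectivity on homology is not inherited by cones, so reducing to the generators $e_n$ cannot work. Second, the lowest level of the support carries no torsion-free class: $H_0(m_{1,k})=\chi_{0,k}$ is torsion, while the non-torsion part $H_1(m_{1,k})=\ker(e_{1,k}\to 1)$ lives in levels $\ge 2$.

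Separately, the fact that a nonzero compact cannot have finite nonempty support does not follow from the $\chi_{n,k}$ being non-compact. It is the genuinely nontrivial input \cite[Theorem 7.7]{BBP+25a}: a nonzero compact has some non-torsion homology. This is the analogue of Theorem~\ref{thm6.4}.

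The missing idea is that no single $e_\Delta$ should be asked to generate. The torsion-free class only buys you a tail; the finitely many low levels, gaps included, must be handled by the essentially finite classification. This is the argument of \cite[Theorem 6.13]{BBP+25b}, which the paper mirrors for $\mathrm{FI}$ in Proposition~\ref{prop6.11}.

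By \cite[Theorem 7.7]{BBP+25a}, some $H_d(Y)$ has a torsion-free element at some level $m$, possibly far above $\min\mathrm{hsupp}(Y)$. Your retract argument via \cite[Lemma 7.10]{BBP+25a} then gives $e_m\in\mathrm{thick}_{\otimes}\langle Y\rangle$, hence $e_N\in\mathrm{thick}_{\otimes}\langle Y\rangle$ for all $N\ge m$.

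Restriction $(i_N)^*$ identifies $\mathrm{D}(E_p)^c/\mathrm{thick}_{\otimes}\langle e_N\rangle$, up to idempotent completion, with the compacts of the essentially finite family of ranks $<N$. There, homological support classifies thick ideals \cite[Proposition 5.2]{BBP+25b}. So $\mathrm{hsupp}(X)\subseteq\mathrm{hsupp}(Y)$ gives $(i_N)^*X\in\mathrm{thick}_{\otimes}\langle (i_N)^*Y\rangle$. Hence $X\in\mathrm{thick}_{\otimes}\langle Y,e_N\rangle=\mathrm{thick}_{\otimes}\langle Y\rangle$.

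Your forward direction, the retract step and the recollement step are correct and slot directly into this argument.
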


\begin{cor}\label{cor5.5} The telescope conjecture holds for $\mathrm{D}(E_{p})$.
\end{cor}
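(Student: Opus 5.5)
The plan is to combine the classification in Theorem \ref{thm5.3} with the structure of compact objects recorded in Proposition \ref{prop5.4}. There are two tasks. First, every non-torsion localizing ideal should be compactly generated. Second, no non-zero torsion localizing ideal should be smashing. Together these give the telescope conjecture, since the zero ideal is trivially compactly generated.

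\emph{Non-torsion ideals.} Let $\mathcal{L}\nsubseteq T^{D}$. By Theorem \ref{thm5.3}(2), $S:=S(\mathcal{L})$ is cofinite and $\mathcal{L}=\mathrm{Loc}_{\otimes}\langle e_{\Delta_S},\chi_{i,k}\mid i\in S\rangle$.

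1. I will produce a compact $Y$ with $\mathrm{hsupp}(Y)=S$. Since $\mathrm{Spc}(\mathrm{D}(E_p)^c)$ is the Hochster dual of $\mathrm{Spec}(\mathbb{Z})$, the supports of compact objects are exactly $\emptyset$ and the cofinite subsets of the closed points together with the generic point. Proposition \ref{prop5.4} identifies Balmer support with homological support on compacts, so every cofinite $S\subseteq\mathbb{N}$ is $\mathrm{hsupp}(Y)$ for some compact $Y$. Concretely, I would build $Y$ from $e_{\Delta_S}$ and finitely many cones realizing the finitely many gaps of $S$ below $\Delta_S$.
2. Every object of $\mathrm{Loc}_{\otimes}\langle Y\rangle$ has homological support inside $S$, because $\mathrm{hsupp}$ is pointwise and is preserved under tensors, cones and coproducts. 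Hence $S(\mathrm{Loc}_{\otimes}\langle Y\rangle)\subseteq S$.
3. Since $\mathrm{hsupp}(Y)$ is cofinite, $Y$ is not torsion. Theorem \ref{thm5.3}(2), together with the first half of its proof (which puts $\chi_{i,k}$ in the ideal for every $i\in\mathrm{hsupp}$ of a member), gives $S(\mathrm{Loc}_{\otimes}\langle Y\rangle)=S$. Therefore $\mathrm{Loc}_{\otimes}\langle Y\rangle=\mathcal{L}$, which is compactly generated.

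\emph{Torsion ideals.} Let $\mathcal{L}=\mathrm{Loc}_{\otimes}\langle\chi_{i,k}\mid i\in S\rangle$ with $S$ non-empty and $\mathcal{L}\subseteq T^{D}$. I want to show $\mathcal{L}^{\perp}$ is not closed under coproducts. This is the analogue of Proposition \ref{prop3.13} and the main obstacle, because $\chi_{i,k}$ is not compact: $\mathrm{Hom}(\chi_{i,k},-)$ involves an inverse limit or product over the levels above $i$, coming from the infinite resolution $e_i\to\chi_{i,k}$ whose kernel is built from $e_m$ with $m>i$.

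1. Fix $i\in S$. I would take objects $Y_n$ that are right Kan extensions $(j_{i+1})_{*}$ of suitable injectives on the upper part $\{i+1,\dots\}$ (or, as in Proposition \ref{prop3.13}, objects concentrated at $i$ and a single level $n>i$). These should satisfy $\mathrm{Hom}(\Sigma^{*}\chi_{l,k},Y_n)=0$ for all $l\in S$, so each $Y_n\in\mathcal{L}^{\perp}$.
2. For the coproduct $\bigoplus_n Y_n$, the comparison map out of the value at level $i$ becomes a map from a direct sum to a direct product. As in Proposition \ref{prop3.13}, this cannot be a quasi-isomorphism, so some $\mathrm{Hom}(\Sigma^{*}\chi_{i,k},\bigoplus_n Y_n)$ is non-zero and $\bigoplus_n Y_n\notin\mathcal{L}^{\perp}$.

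If that explicit computation proves awkward, my fallback is Krause's description of smashing subcategories as generated by an ideal of morphisms between compact objects. Since every non-zero compact object has cofinite support, such morphisms should not be able to produce a non-zero ideal that is contained in $T^{D}$.
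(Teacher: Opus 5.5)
\textbf{The torsion half, which you rightly flag as the crux, has a genuine gap: the witnesses you propose cannot work.}

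The sum-versus-product mechanism of Proposition \ref{prop3.13} exists because $\mathfrak{C}^{\mathrm{pr}}$ is not noetherian. There the kernel of $e_{1}\to\chi_{1,k}$ is the infinitely generated object $\bigoplus_{p}e_{p,k}$, so $\mathrm{Hom}(\chi_{1,k},-)$ involves $\prod_{p}$. In the locally noetherian category $\mathrm{A}(E_p)$ this does not happen. Each $\chi_{l,V}$ is finitely generated; the kernel of $e_{l,k}\to\chi_{l,k}$ is even generated at level $l+1$. So $\chi_{l,V}$ has a resolution by finitely generated projectives, and hence $\mathrm{Hom}_{\mathrm{D}(E_p)}(\chi_{l,V},\Sigma^{s}\bigoplus_{n}Y_{n})\cong\bigoplus_{n}\mathrm{Hom}_{\mathrm{D}(E_p)}(\chi_{l,V},\Sigma^{s}Y_{n})$ whenever the $Y_n$ are modules. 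The same holds more generally when the $Y_n$ have homology in a fixed bounded range.

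Both of your candidate families are modules. $(j_{i+1})_{*}$ of an injective is an injective module of $\mathrm{A}(E_p)$, since it is right adjoint to an exact functor, and direct sums of injectives remain injective there. Objects concentrated at two levels are modules too. So if each $Y_n$ lies in $\mathcal{L}^{\perp}$, then so does $\bigoplus_n Y_n$, and your step 2 cannot produce a contradiction. The non-compactness of $\chi_{i,k}$ in $\mathrm{D}(E_p)$ comes from infinite projective dimension, not from infinite generation. So there is no direct analogue of the argument for $\mathfrak{C}^{\mathrm{pr}}$.

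\textbf{The missing idea is the paper's argument: the generators themselves lie in the orthogonal.} In characteristic zero $e_n$ is injective, and it is torsion-free, in $\mathrm{A}(E_p)$. Hence $\mathrm{Hom}(\Sigma^{s}\chi_{l,V},e_{n})=0$ for all $s,l,V$, so $e_n\in\mathcal{L}^{\perp}$ for every torsion $\mathcal{L}$. If $\mathcal{L}^{\perp}$ were localizing, it would contain $\mathrm{Loc}\langle e_n\mid n\in\mathbb{N}\rangle=\mathrm{D}(E_p)$, forcing $\mathcal{L}=0$.

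Unwinding this gives an explicit witness, necessarily of unbounded length. Take the brutal truncations $P_{\le n}$ of a projective resolution of $\chi_{i,k}$. They are finite complexes of $e_m$'s, hence lie in $\mathcal{L}^{\perp}$. But $\operatorname{hocolim}_{n}P_{\le n}\simeq\chi_{i,k}\notin\mathcal{L}^{\perp}$, so $\bigoplus_{n}P_{\le n}\notin\mathcal{L}^{\perp}$. Your Krause-style fallback is only a heuristic as stated. The operative input is this fact about the $e_n$, not the cofinite support of compact objects.

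\textbf{Your non-torsion half is fine, with one inference to repair.} It differs from the paper only in using the uniqueness in Theorem \ref{thm5.3}(2) instead of proving both containments via Proposition \ref{prop5.4}. However, the step from cofinite $\mathrm{hsupp}(Y)$ to $Y$ not torsion is false in general: $\bigoplus_n\chi_{n,k}$ is torsion with full support. Replace it by the fact that a non-zero compact object of $\mathrm{D}(E_p)$ has a non-torsion homology object. This is the $E_p$-analogue of Theorem \ref{thm6.4}; cf.\ \cite[Theorem 7.7]{BBP+25a}.
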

\begin{proof}First we claim that a nonzero torsion localizing ideal is not smashing. Indeed, suppose for contradiction that $\mathcal{L}$ is torsion and smashing. Since $\mathcal{L}$ is smashing, its right orthogonal $\mathcal{L}^\perp$ is a localizing subcategory. On the other hand, the identity
\[
\mathrm{Hom}_{\mathrm{D}(E_{p})}(\chi_{i,k}, \Sigma^{s}e_n)=0
\]
for  $\forall i,n\in\mathbb{N}$, $\forall s\in \mathbb{Z}$ implies that $e_n\in \mathcal{L}^\perp$ for each $n$. Hence $\mathcal{L}^\perp=\mathrm{D}(\mathfrak{C}_{p})$ by \cite[Theorem 7.3]{BBP+25a}, so $\mathcal{L}=0$, contradicting the assumption that $\mathcal{L}$ is non-zero. It remains to prove that the non-torsion localizing ideal $\mathcal{L}=\mathrm{Loc}_{\otimes}\langle e_{\Delta_{S(\mathcal{L})}}, \chi_{i,k}\mid i \in S(\mathcal{L})\rangle$ is compactly generated. It follows from Lemma \ref{lem5.2} that $S(\mathcal{L})$ is cofinite. For each $f\in \mathbb{N}\setminus S(\mathcal{L})$, define $m_{f,k}:=\mathrm{cof}(e_{f,k}\rightarrow 1)$ and set $$m_{\mathcal{L}}:=\displaystyle\bigotimes_{\substack{f\in \mathbb{N}\setminus S(\mathcal{L}) }} m_{f,k}.$$ Then $m_{\mathcal{L}}$ is compact and $\mathrm{hsupp}(m_{\mathcal{L}})=S(\mathcal{L})$. By Proposition \ref{prop5.4}, $e_{\Delta_{S(\mathcal{L})}}\in \mathrm{Loc}_{\otimes}\langle m_{\mathcal{L}}\rangle$. For $i\in S(\mathcal{L})$, we have $\chi_{i,k}\in \mathrm{Loc}_{\otimes}\langle m_{\mathcal{L}}\rangle$, since $\chi_{i,k}$ is a direct summand of $\chi_{i,k}\otimes m_{\mathcal{L}}\otimes V^{*}$ where $V=m_{\mathcal{L}}(i)$.  Hence             $\mathcal{L}\subseteq\mathrm{Loc}_{\otimes}\langle m_{\mathcal{L}}\rangle$. Conversely, we have a triangle $$(j_{\Delta_{S(\mathcal{L})}})_{!}(j_{\Delta_{S(\mathcal{L})}})^{*}m_{\mathcal{L}}\rightarrow m_{\mathcal{L}}\rightarrow (i_{\Delta_{S(\mathcal{L})}})_{*}(i_{\Delta_{S(\mathcal{L})}})^{*}m_{\mathcal{L}}$$ with $(j_{\Delta_{S(\mathcal{L})}})_{!}(j_{\Delta_{S(\mathcal{L})}})^{*}m_{\mathcal{L}}$ and $(i_{\Delta_{S(\mathcal{L})}})_{*}(i_{\Delta_{S(\mathcal{L})}})^{*}m_{\mathcal{L}}$ in $\mathcal{L}$. Hence $m_{\mathcal{L}}\in \mathcal{L}$, and so $\mathcal{L}=\mathrm{Loc}_{\otimes}\langle m_{\mathcal{L}}\rangle$, which is compactly generated. This concludes the proof.
\end{proof}

Note that Pontryagin duality induces an equivalence between the category of $\mathrm{VI}$-modules (over $k$) and $\mathrm{A}(E_{p})$.
\begin{cor}\label{cor5.6}The telescope conjecture holds for the derived category of $\mathrm{VI}$-modules.
\end{cor}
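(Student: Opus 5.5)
The plan is to transport Corollary \ref{cor5.5} along the Pontryagin duality equivalence. Write $\mathrm{Mod}_{\mathrm{VI}}$ for the category of $\mathrm{VI}$-modules over $k$. Pontryagin duality $V\mapsto V^{\vee}=\mathrm{Hom}(V,\mathbb{Q}/\mathbb{Z})$ gives a contravariant equivalence between finite-dimensional $\mathbb{F}_p$-vector spaces with injections and elementary abelian $p$-groups with surjections. Precomposition with it therefore gives an equivalence of abelian categories
\[
F\colon \mathrm{A}(E_{p})=\mathrm{Fun}(E_{p}^{op},\mathrm{Mod}~k)\xrightarrow{\ \simeq\ } \mathrm{Fun}(\mathrm{VI},\mathrm{Mod}~k)=\mathrm{Mod}_{\mathrm{VI}}.
\]

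First I will check that $F$ is symmetric monoidal. Both tensor products are computed pointwise, and $F$ only reindexes objects, so $F(X\otimes Y)\cong F(X)\otimes F(Y)$ naturally. The unit of $\mathrm{A}(E_{p})$ is the constant functor $k$, and $F$ sends it to the constant functor $k$, which is the unit of $\mathrm{Mod}_{\mathrm{VI}}$. The symmetry and associativity constraints are pointwise as well, so they match.

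Next I pass to derived categories. An exact equivalence of abelian categories induces a triangulated equivalence $\mathrm{D}(E_{p})\simeq \mathrm{D}(\mathrm{Mod}_{\mathrm{VI}})$. Since every object on both sides is flat, the derived tensor product is the pointwise tensor product of complexes, so this equivalence is a tensor triangulated equivalence. It preserves coproducts, and hence also compact objects.

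Finally I observe that all the notions in the telescope conjecture are intrinsic to a compactly generated tensor triangulated category: localizing ideals, smashing ideals (defined via $\mathcal{S}^{\perp}$ being localizing), and compact generation. A tensor triangulated equivalence therefore gives a bijection between the smashing ideals on the two sides that respects compact generation. By Corollary \ref{cor5.5}, every smashing ideal of $\mathrm{D}(E_p)$ is compactly generated, so the same holds in $\mathrm{D}(\mathrm{Mod}_{\mathrm{VI}})$. Theorem \ref{thm5.3} transfers in the same way and classifies its localizing ideals.

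The only point needing care is the first step: the Pontryagin duality identification must be an equivalence of indexing categories compatible with the variance conventions, namely contravariant functors on $E_p$ versus covariant functors on $\mathrm{VI}$. Everything after that is formal.
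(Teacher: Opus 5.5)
Your proposal is correct and is the paper's own argument: the paper gets Corollary \ref{cor5.6} from Corollary \ref{cor5.5} by transporting along the equivalence $\mathrm{A}(E_p)\simeq \mathrm{Fun}(\mathrm{VI},\mathrm{Mod}~k)$ induced by Pontryagin duality, which it only asserts, and you supply the routine checks that this equivalence is symmetric monoidal and induces a tensor triangulated equivalence preserving smashing ideals and compact generation. One small point to add to your first step is that morphisms of $E_p$ are conjugacy classes of surjections, and for abelian targets these are just surjections, so duality does give $E_p^{op}\simeq\mathrm{VI}$.
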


We end this section with the following question, which is supported by the results obtained above.
\begin{quest}Does the telescope conjecture hold for $\mathrm{D}(\mathcal{U})$ for every $U\subseteq \mathcal{G}$ satisfying Hypothesis \ref{hyp2.3}?
\end{quest}
\section{\bf Derived category of  $\mathrm{FI}$-modules}
In this section, we establish the telescope conjecture for the derived category of $\mathrm{FI}$-modules. Although the proof strategy parallels that of the previous section for $\mathrm{VI}$-modules, we provide as many details as possible, for the benefit of readers with an interest in the theory of $\mathrm{FI}$-modules.\\
Let $\lambda\vdash d$ be a partition of an integer $d$ and let $V_{\lambda}$ be the irreducible $k[S_{d}]$-representation labelled by $\lambda$. Define $P_{\lambda}:=M_{d,V_{\lambda}}$. Then \[
P_{\lambda}(n) = \begin{cases}
0 & \text{if } n<d\\
Ind^{S_{n}}_{S_{n}\times S_{n-d}}(V_{\lambda}\boxtimes k) & \text{if } n\geq d.
\end{cases}
\]
Since $k[S_{d}]\cong\displaystyle\bigoplus_{\substack{\lambda\vdash d }} V_{\lambda}^{\oplus \mathrm{dim}(V_{\lambda})}$, $M_{d}\cong \displaystyle\bigoplus_{\substack{\lambda\vdash d }} P_{\lambda}^{\oplus \mathrm{dim}(V_{\lambda})}$. Every finitely generated projective $\mathrm{FI}$-module is a finite direct sum of the $P_\lambda$'s.
\\ The following result follows immediately from the adjunction defining $M_{d,V}$ and Schur's lemma.
\begin{lem}\label{lem6.1} Let $\lambda\vdash d$ and $\mu\vdash e$ be partitions. Then \[
\mathrm{Hom}_{\mathrm{FI}}(P_{\mu},P_{\lambda}) = \begin{cases}
0 & \text{if } e<d\\
0 & \text{if } d=e, \lambda\neq \mu\\
k & \text{if } d=e, \lambda= \mu
\end{cases}
\]
\end{lem}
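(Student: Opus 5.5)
The plan is to compute the Hom space through the defining adjunction and then apply Schur's lemma. Since $P_{\mu}=M_{e,V_{\mu}}$ and $\mathrm{Hom}_{\mathrm{FI}}(M_{n,V},X)\cong \mathrm{Hom}_{k[S_{n}]}(V,X(n))$, we get
\[
\mathrm{Hom}_{\mathrm{FI}}(P_{\mu},P_{\lambda})\cong \mathrm{Hom}_{k[S_{e}]}(V_{\mu},P_{\lambda}(e)).
\]
So everything reduces to understanding the $S_{e}$-representation $P_{\lambda}(e)$ in the two cases $e<d$ and $e=d$.

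If $e<d$, the displayed formula for $P_{\lambda}$ gives $P_{\lambda}(e)=0$. Indeed, $\mathrm{Inj}([d],[e])=\emptyset$, so $M_{d,V_{\lambda}}(e)=0$. The Hom space therefore vanishes.

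If $e=d$, we have $\mathrm{Inj}([d],[d])=S_{d}$, so $P_{\lambda}(d)\cong k[S_{d}]\otimes_{k[S_{d}]}V_{\lambda}\cong V_{\lambda}$. I will check that this identification is $S_{d}$-equivariant, with $S_{d}$ acting by post-composition on $\mathrm{Inj}([d],-)$ and the tensor taken over the right action. Then $\mathrm{Hom}_{k[S_{d}]}(V_{\mu},V_{\lambda})$ is computed by Schur's lemma.
\begin{enumerate}
\item The irreducibles $V_{\lambda}$ for distinct partitions $\lambda\vdash d$ are pairwise non-isomorphic, so the Hom space is $0$ when $\lambda\neq\mu$.
\item When $\lambda=\mu$, it is $\mathrm{End}(V_{\lambda})$, which equals $k$ because the Specht modules are absolutely irreducible: they are defined over $\mathbb{Q}$, so $k$ is a splitting field in characteristic zero.
\end{enumerate}

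The only step needing care is the endomorphism ring in the case $\lambda=\mu$. For an arbitrary field, Schur's lemma alone only gives a division algebra. Here it is genuinely $k$ because symmetric group representations are split over $\mathbb{Q}$. This is standard, so the obstacle is mild; the rest is the routine bookkeeping of the adjunction.
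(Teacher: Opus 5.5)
Your proposal is correct and follows essentially the same route as the paper, which states only that the lemma follows from the adjunction $\mathrm{Hom}_{\mathrm{FI}}(M_{n,V},X)\cong\mathrm{Hom}_{k[S_{n}]}(V,X(n))$ together with Schur's lemma. Your remark that $\mathrm{End}_{k[S_{d}]}(V_{\lambda})=k$ because Specht modules are absolutely irreducible in characteristic zero makes explicit a detail the paper leaves implicit.
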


Fix a partition $\lambda\vdash d$, for any $n\geq d+\lambda_{1}$, there is a padded partition $$\lambda[n]:=(n-d,\lambda_{1},\lambda_{2},\cdots).$$ We write $V_{\lambda[n]}$ for the corresponding irreducible $k[S_{n}]$-representation. The following standard fact about the padded representations $V_{\lambda[n]}$ will be useful.
\begin{lem}\label{lem6.2}Let $\lambda\vdash d$ be a partition and let $n\geq d+\lambda_{1}$. Then for every partition $\mu\vdash e$ with $e\leq d$, we have \[
\mathrm{Hom}_{k[S_{n}]}(V_{\lambda[n]},P_{\mu}(n)) = \begin{cases}
k & \text{if } \mu=\lambda\\
0 & \text{if } \mu\neq\lambda
\end{cases}
\]
\end{lem}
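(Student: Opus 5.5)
We reduce the statement to the branching rule for $S_n$ together with the description of $P_\mu(n)$ as an induced representation. For $n\geq e$ we have $P_\mu(n)\cong \mathrm{Ind}^{S_n}_{S_e\times S_{n-e}}(V_\mu\boxtimes k)$. Frobenius reciprocity then gives
\[
\mathrm{Hom}_{k[S_n]}(V_{\lambda[n]},P_\mu(n))\cong \mathrm{Hom}_{k[S_e\times S_{n-e}]}\bigl(\mathrm{Res}^{S_n}_{S_e\times S_{n-e}}V_{\lambda[n]},\,V_\mu\boxtimes k\bigr).
\]
Since $k$ has characteristic zero, this dimension is computed over $\mathbb{Q}$ by the usual character theory. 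Equivalently, it is the multiplicity of $V_{\lambda[n]}$ in $\mathrm{Ind}(V_\mu\boxtimes V_{(n-e)})$.

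By Pieri's rule, that multiplicity is $1$ if $\lambda[n]/\mu$ is a horizontal strip of size $n-e$, and $0$ otherwise. So the main step is the combinatorial claim that, for $e\leq d$ and $n\geq d+\lambda_1$, the skew shape $\lambda[n]/\mu$ is a horizontal strip exactly when $\mu=\lambda$.

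The case $\mu=\lambda$ is immediate. Here $\lambda[n]/\lambda$ consists of cells in the first row (of length $n-d-\lambda_1\geq 0$) and cells in rows $i+1$ beyond $\lambda_i$, of length $\lambda_i-\lambda_{i+1}$. No two of these lie in the same column, because the interlacing $\lambda[n]_1\geq\lambda_1\geq\lambda[n]_2=\lambda_1\geq\lambda_2\geq\cdots$ holds. This interlacing is exactly the horizontal strip condition.

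For the converse, a horizontal strip $\lambda[n]/\mu$ forces the interlacing $\mu_i\geq \lambda[n]_{i+1}=\lambda_i$ for all $i\geq 1$. Hence $|\mu|\geq|\lambda|$, that is $e\geq d$. Combined with $e\leq d$ this gives $e=d$, and then $\mu_i\geq\lambda_i$ for all $i$ with equal sizes forces $\mu=\lambda$.

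The only point requiring care, which I expect to be the mild obstacle, is indexing. The strip condition is the interlacing $\lambda[n]_1\geq\mu_1\geq\lambda[n]_2\geq\mu_2\geq\cdots$, and $\lambda[n]$ must actually be a partition, which is what $n-d\geq\lambda_1$ ensures. One should also note that for $e\leq d\leq n$ the module $P_\mu(n)$ is indeed the induced representation, so no vanishing case arises.
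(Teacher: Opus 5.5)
Your proof is correct and is essentially the paper's argument: Pieri's rule for $\mathrm{Ind}^{S_n}_{S_e\times S_{n-e}}(V_\mu\boxtimes k)$, together with the interlacing $\mu_i\geq\lambda[n]_{i+1}=\lambda_i$, which forces $e\geq d$ and hence $\mu=\lambda$. You go slightly beyond the paper by making the Frobenius reciprocity/multiplicity translation explicit and by checking directly that $\lambda[n]/\lambda$ is a horizontal strip; the paper leaves both of these implicit.
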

\begin{proof}By definition, $P_{\mu}(n)\cong\mathrm{Ind}^{S_{n}}_{S_{e}\times S_{n-e}}(V_{\mu}\boxtimes k)$ and Pieri's rule says that $V_{\lambda[n]}$ occurs in this induced representation if and only if $\lambda[n]\setminus \mu$ is a horizontal strip of size $n-e$ and then it occurs with multiplicity one. Suppose that $\lambda[n]\setminus \mu$ is a horizontal strip. The usual interlacing criterion for a horizontal strip gives  $\lambda[n]_{i}\geq \mu_{i} \geq \lambda[n]_{i+1}$ for each $i\geq 1$. Since $\lambda[n]_{i+1}=\lambda_{i}$, we have $\mu_{i}\geq \lambda_{i}$ for each $i$. This implies $e\geq d$. It follows that $e=d$ and hence $\mu=\lambda$.

\end{proof}

\begin{lem}\label{lem6.3}Let $X^{\bullet}$ be a perfect complex in $\mathrm{D}_{\mathrm{FI}}$. If there exists some $N\in \mathbb{N}$ such that $X^{\bullet}(n)$ is acyclic for all $n\geq N$, then $X^{\bullet}$ is acyclic.
\end{lem}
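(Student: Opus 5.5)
Suppose $X^{\bullet}$ is not acyclic. We derive a contradiction by induction on the length of $X^{\bullet}$, peeling off the extreme nonzero term with the help of Lemmas \ref{lem6.1} and \ref{lem6.2}.

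\emph{Minimal representative.} Replace $X^{\bullet}$ by a homotopy equivalent bounded complex of finitely generated projectives that is minimal. Concretely, each term $X^{i}$ is a finite direct sum of $P_{\lambda}$'s. We cancel any component $P_{\lambda}\to P_{\lambda}$ of a differential that is an isomorphism; this is Gaussian elimination, which does not change the homotopy type. By Lemma \ref{lem6.1}, the only maps $P_\mu\to P_\lambda$ with $|\mu|\le|\lambda|$ are scalars for $\mu=\lambda$ and zero otherwise. Hence after cancellation, every component $P_\mu\to P_\lambda$ of the differential between the degree-$d$ pieces is zero. Since $X^{\bullet}$ is not acyclic, it is not contractible, so the minimal representative is nonzero. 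Let $d$ be the smallest integer such that some summand $P_\lambda$ with $\lambda\vdash d$ appears, say in homological degree $i$.

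\emph{Detecting homology at large $n$.} Fix $n\ge \max(N, d+\lambda_1)$. Apply the exact functor $\mathrm{Hom}_{k[S_n]}(V_{\lambda[n]},-)$ to the complex $X^{\bullet}(n)$; this computes the $V_{\lambda[n]}$-isotypic multiplicities of the homology. Every summand $P_\mu$ of $X^{\bullet}$ has $|\mu|\ge d$. For $|\mu|=d$, Lemma \ref{lem6.2} shows that the contribution is $k$ exactly when $\mu=\lambda$.

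\emph{Main obstacle.} Summands $P_\mu$ with $|\mu|>d$ can also contain $V_{\lambda[n]}$ in $P_\mu(n)$. For example, $\mu=\lambda\cup\{\text{box in first row}\}$ gives $\lambda[n]\setminus\mu$ a horizontal strip, so these summands do contribute and the multiplicity complex is not concentrated in the $P_\lambda$-summands. To handle this, I would instead use the filtration of $X^{\bullet}$ by degree of generation. By Lemma \ref{lem6.1}, the summands $P_\mu$ with $|\mu|\ge d+1$ form a subcomplex $X_{>d}$, since no nonzero maps go from larger to smaller generation degree. The quotient $X_{d}=X^{\bullet}/X_{>d}$ has zero differential by minimality. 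This gives a short exact sequence of complexes
\[
0\to X_{>d}\to X^{\bullet}\to X_d\to 0,
\]
and for $n\ge N$ the long exact sequence in homology gives $H(X_d(n))\cong H(X_{>d}(n))[1]$.

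\emph{Comparing both sides.} I would compare these two sides using the rows of the partition rather than the size alone. Choose $\lambda$ among the degree-$d$ summands with lexicographically maximal $(\lambda_1,\lambda_2,\dots)$, and take $n$ large. Then $V_{\lambda[n]}$ occurs in $H(X_d(n))$ with multiplicity equal to the number of copies of $P_\lambda$, which is positive. On the other side, one needs to show that $V_{\lambda[n]}$ does not occur in $X_{>d}(n)$ in the relevant degrees, or at least not with enough multiplicity to match.

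If that counting fails, the fallback is an induction on the maximal generation degree, taken from the top. The maximal-degree summands $P_\nu$ form a quotient complex. Its $n$-th value has the representation $V_{\nu[n]}$, and by the Pieri argument of Lemma \ref{lem6.2} this representation appears in no $P_\mu(n)$ with $|\mu|<|\nu|$, because a horizontal strip forces $\mu_i\ge\nu_i$. So the $V_{\nu[n]}$-multiplicity complex of $X^{\bullet}(n)$ is exactly that of the top layer. The top layer has zero differential by Lemma \ref{lem6.1} and minimality, so its multiplicity complex has nonzero homology, contradicting acyclicity for $n\ge N$. I expect this top-down version to be the proof that works: Lemma \ref{lem6.2} is exactly the statement that smaller-degree summands cannot contain $V_{\nu[n]}$.
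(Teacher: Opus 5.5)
Your final, top-down argument is correct. It rests on the same mechanism as the paper. By Lemma \ref{lem6.1} the summands of lower generation degree form a subcomplex, so the top-degree summands form a quotient complex. By Lemma \ref{lem6.2}, for $n$ large the exact functor $\mathrm{Hom}_{k[S_n]}(V_{\nu[n]},-)$ sees only the $P_\nu$-summands of that top layer.

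The difference is organizational. The paper does not minimize. It writes the top layer as $\overline{X}^{\bullet}\cong\bigoplus_{\lambda\vdash d}P_\lambda\otimes_k C_\lambda^{\bullet}$ and deduces from $\mathrm{Hom}_{k[S_n]}(V_{\lambda[n]},X^{\bullet}(n))\cong C_\lambda^{\bullet}$ that each $C_\lambda^{\bullet}$ is acyclic. It then inducts on the top degree by passing to the lower-degree subcomplex $Y^{\bullet}$.

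You instead first pass to a minimal model by Gaussian elimination. This is legitimate here: $\mathrm{End}(P_\lambda)=k$, so any nonzero component $P_\lambda\to P_\lambda$ is invertible, and evaluation at $n$ preserves homotopy equivalences. After this, the top layer has zero differential, and a single application of Lemma \ref{lem6.2} produces nonzero homology in $\mathrm{Hom}_{k[S_n]}(V_{\nu[n]},X^{\bullet}(n))$. So no induction is needed. The paper's version avoids the minimal-model lemma, while yours trades the induction for it.

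One correction to your intermediate bottom-up paragraph: there you have the direction of Lemma \ref{lem6.1} reversed. $\mathrm{Hom}_{\mathrm{FI}}(P_\mu,P_\lambda)=0$ when $|\mu|<|\lambda|$, so there are no nonzero maps from smaller to larger generation degree. Maps from larger to smaller degree can be nonzero; for example $\mathrm{Hom}_{\mathrm{FI}}(M_1,M_0)\cong M_0(1)=k$. Hence $X_{>d}$ is a quotient complex, not a subcomplex, and the sequence should read $0\to X_d\to X^{\bullet}\to X_{>d}\to 0$. That bottom-up route, including the unfinished lexicographic comparison, should simply be dropped. Your fallback correctly treats the top layer as a quotient and is the proof to keep.
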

\begin{proof}Let $d$ be the maximum of the degrees $|\lambda|$ such that $P_\lambda$ appears as a direct summand of some term of $X^{\bullet}$. By the choice of $d$, we can decompose each term $X^{q}$ of $X$ as $X^{q}=Y^{q}\oplus Z^{q}$ where $Y^{q}:=\displaystyle\bigoplus_{\substack{|\mu|<d }} P_{\mu}^{\oplus m_{q,\mu}}$ and $Z^{q}:=\displaystyle\bigoplus_{\substack{\lambda\vdash d }} P_{\lambda}\otimes_{k}C_{\lambda}^{q}$ for some finite-dimensional vector spaces $C_{\lambda}^{q}$. By Lemma \ref{lem6.1}, we have $d_{X}^{q}(Y^{q})\subseteq Y^{q+1}$ and so $Y^{\bullet}$ is a subcomplex of $X^{\bullet}$. Set $\overline{X}^{\bullet}:=X^{\bullet}/Y^{\bullet}$. It follows from the same lemma that the differentials of $\overline{X}^{\bullet}$ cannot mix distinct partitions $\lambda\vdash d$. Hence $$\overline{X}^{\bullet}\cong \displaystyle\bigoplus_{\substack{\lambda\vdash d }} P_{\lambda}\otimes_{k}C_{\lambda}^{\bullet}$$ where $C_{\lambda}^{\bullet}$ is a bounded complex of finite-dimensional $k$-vector spaces. Choose $n>\mathrm{max}\{N,d+\lambda_{1}\}$. Then by Lemma \ref{lem6.2} we have $$\mathrm{Hom}_{k[S_{n}]}(V_{\lambda[n]}, X^{\bullet}(n))\cong C_{\lambda}^{\bullet}.$$
It follows that $C_{\lambda}^{\bullet}$ is acyclic, since $X^{\bullet}(n)$ is acyclic and $\mathrm{Hom}_{k[S_{n}]}(V_{\lambda[n]}, -)$ is exact. Therefore $\overline{X}^{\bullet}$ is acyclic.\\
Now we have a short exact sequence of complexes $$\xymatrix@C=0.5cm{
  0 \ar[r] & Y^{\bullet} \ar[r] & X^{\bullet} \ar[r] & \overline{X}^{\bullet} \ar[r] & 0 } $$ where $\overline{X}^{\bullet}$ is acyclic, and $X^{\bullet}(n)$ is acyclic for all $n\geq N$. Then $Y^{\bullet}(n)$ is acyclic for all $n\geq N$. Note that $Y^{\bullet}$ has maximal generating degree strictly less than $d$; we can therefore apply induction on $d$. It remains to verify the base case $d=0$. When $d=0$, there is only the empty partition $\emptyset$. Since $P_{\emptyset}=k$, we have $X^{\bullet}\cong k\otimes_{k}C^{\bullet}$, which is acyclic by assumption. This concludes the proof.
\end{proof}
Compare the following result to \cite[Theorem 7.7]{BBP+25a}; this will be crucial in proving the telescope conjecture for the derived category of $\mathrm{FI}$-modules.
\begin{thm}\label{thm6.4}Let $X$ be a non-zero perfect complex in $\mathrm{D}_{\mathrm{FI}}$. Then there exists some $i\in \mathbb{Z}$ such that $H_{i}(X)$ is not torsion.
\end{thm}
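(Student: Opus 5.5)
We argue by contradiction, using Lemma \ref{lem6.3}. Suppose $X$ is a nonzero perfect complex such that every homology object $H_{i}(X)$ is torsion. Since $X$ is a bounded complex of finitely generated projective $\mathrm{FI}$-modules, only finitely many $H_{i}(X)$ are nonzero. Because $\mathrm{Mod}_{\mathrm{FI}}$ is locally noetherian, each such $H_{i}(X)$ is a subquotient of a finitely generated module and is therefore finitely generated.

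The first step is to show that a finitely generated torsion $\mathrm{FI}$-module $M$ vanishes in all sufficiently large degrees. Choose finitely many generators $x_{1},\dots,x_{r}$ with $x_{j}\in M(n_{j})$. Each $x_{j}$ is torsion, so there is an injection $\alpha_{j}:[n_{j}]\to[m_{j}]$ with $(\alpha_{j})_{*}(x_{j})=0$. Any other injection $\beta:[n_{j}]\to[m]$ with $m\geq m_{j}$ factors as $\beta=\gamma\circ\alpha_{j}$ for some injection $\gamma:[m_{j}]\to[m]$, since $S_{m}$ acts transitively on injections $[n_{j}]\to[m]$. Hence $\beta_{*}(x_{j})=0$. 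Now set $N_{i}:=\max_{j} m_{j}$. Every element of $M(m)$ with $m\geq N_{i}$ is a $k$-linear combination of elements $\beta_{*}(x_{j})$, so $M(m)=0$ for all $m\geq N_{i}$.

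The second step applies this to each of the finitely many nonzero $H_{i}(X)$, giving a bound $N_{i}$ for each. Let $N$ be the maximum of these bounds. Evaluation at $[n]$ is exact, so $H_{i}(X(n))\cong H_{i}(X)(n)=0$ for every $i$ and every $n\geq N$. Thus $X(n)$ is acyclic for all $n\geq N$. Lemma \ref{lem6.3} then says that $X$ is acyclic, i.e. $X\simeq 0$ in $\mathrm{D}_{\mathrm{FI}}$. This contradicts the assumption that $X$ is nonzero, so some $H_{i}(X)$ is not torsion.

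The real content is Lemma \ref{lem6.3}, which is already established. The only step that needs care is the first one: the passage from "torsion" to "eventually zero" depends on finite generation, hence on the noetherian property, together with the transitivity of $S_{m}$ on injections. Both are available from the facts recalled earlier in the paper.
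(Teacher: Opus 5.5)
Your proof is correct and follows the paper's argument. You assume every $H_i(X)$ is torsion, deduce that $X(n)$ is acyclic for all $n\gg 0$, and conclude with Lemma \ref{lem6.3}. The only difference is that the paper cites Theorem B of [CEFN14] for the eventual vanishing of the finitely generated torsion homology, whereas you prove this directly from finite generation (via local noetherianity) and the factorization of injections through $\alpha_j$, which is a valid self-contained substitute.
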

\begin{proof}Suppose $H_{i}(X)$ is torsion for all $i\in \mathbb{Z}$. It follows from \cite[Theorem B]{CEFN14} that there exists some $N\in \mathbb{N}$ such that $H_{i}(X(n))\cong H_{i}(X)(n)=0$ for all $i\in \mathbb{Z}$ and $n>N$. Thus $X(n)$ is acyclic for all $n>N$. By Lemma \ref{lem6.3}, it follows that $X$ is acyclic,  contradicting  the assumption that $X$ is non-zero.
\end{proof}

\begin{lem}\label{lem6.5}Let $X\in \mathrm{Mod}_{\mathrm{FI}}$ and let $x\in X(d)$ be torsion-free. Then $x$ gives rise to a split monomorphism $M_{d}\rightarrow M_{d}\otimes X$.
\end{lem}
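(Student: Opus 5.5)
The plan is to write down the map explicitly by Yoneda, and then to build a retraction from a single linear functional defined on a suitable "stable value" of $X$ near the point $x$.

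\emph{The map.} Since $M_{d}=k[\mathrm{Inj}([d],-)]$ represents evaluation at $d$, an element of $(M_{d}\otimes X)(d)=k[S_{d}]\otimes X(d)$ determines a morphism out of $M_{d}$. I take the element $\mathrm{id}_{[d]}\otimes x$. This gives
\[
\varphi\colon M_{d}\to M_{d}\otimes X,\qquad \varphi_{n}(f)=f\otimes f_{*}(x)\quad\text{for } f\in\mathrm{Inj}([d],[n]).
\]
It remains to produce an FI-linear map $\psi\colon M_{d}\otimes X\to M_{d}$ with $\psi\circ\varphi=\mathrm{id}$.

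\emph{Shape of the retraction.} I look for $\psi$ of the form
\[
\psi_{n}(f\otimes y)=\lambda_{n,f}(y)\,f\qquad (y\in X(n)),
\]
where $\lambda_{n,f}\colon X(n)\to k$ are linear functionals. FI-linearity of $\psi$ amounts to the compatibility
\[
\lambda_{m,gf}(g_{*}y)=\lambda_{n,f}(y)\quad\text{for every injection } g\colon[n]\to[m].
\]
Given this compatibility, $\psi_{n}\varphi_{n}(f)=\lambda_{n,f}(f_{*}x)\,f=\lambda_{d,\mathrm{id}}(x)\,f$. So the whole problem reduces to finding a compatible family $(\lambda_{n,f})$ with $\lambda_{d,\mathrm{id}}(x)=1$.

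\emph{Constructing the functionals.} Such a family is the same thing as a single functional on the colimit $L$ of $X$ over the category of $d$-pointed finite sets, that is, pairs $([n],f)$ with morphisms the injections compatible with the markings. This category is not filtered, because parallel morphisms need not be equalized. I would control it in two steps.

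\begin{itemize}
\item Fix the standard marking $\iota_{n}\colon[d]\hookrightarrow[n]$. Up to automorphism, the morphisms $([n],\iota_{n})\to([m],\iota_{m})$ differ only by the stabilizer $S_{m-d}$. Hence $L\cong\operatorname{colim}_{n}X(n)_{S_{n-d}}$, a sequential colimit of coinvariants along the standard inclusions.
\item Work in characteristic $0$. The element $\iota_{n*}(x)$ is fixed by $S_{n-d}$. Fixed vectors map isomorphically onto their image in the coinvariants, because the averaging idempotent splits them off.
\end{itemize}

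Torsion-freeness of $x$ says that $\iota_{n*}(x)\neq 0$ for all $n$. Combining this with the second point, the image of $x$ is nonzero at every stage of the sequential colimit, so $\bar x\neq 0$ in $L$. I then choose a linear functional $\lambda$ on $L$ with $\lambda(\bar x)=1$, and set
\[
\lambda_{n,f}:=\lambda\circ\bigl(X(n)\xrightarrow{\ \sigma_{*}\ }X(n)\to L\bigr),
\]
where $\sigma\in S_{n}$ is any permutation with $\sigma f=\iota_{n}$. This is independent of the choice of $\sigma$, since $\sigma$ is determined up to the stabilizer $S_{n-d}$. Compatibility then holds by construction of the colimit.

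\emph{Main obstacle.} The step I expect to need the most care is the identification of $L$ with the sequential colimit of $S_{n-d}$-coinvariants, together with the check that the transition maps do not kill $\bar x$. This is precisely where both characteristic $0$ and torsion-freeness enter. The remaining verifications are formal: that $\psi$ is well defined and FI-linear, and that $\psi\varphi=\mathrm{id}$.
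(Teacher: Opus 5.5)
Your proposal is correct, and it takes a genuinely different route from the paper at the key step.

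\textbf{The paper's argument.} It uses the same map $\varphi$. It observes that each $\varphi_n$ is injective, being the direct sum over $f\in\mathrm{Inj}([d],[n])$ of the maps $a[f]\mapsto a f_*(x)$ with $f_*(x)\neq 0$. It then invokes the theorem of Gan--Li that $M_d$ is an injective object of $\mathrm{Mod}_{\mathrm{FI}}$ in characteristic $0$, so any monomorphism out of $M_d$ splits.

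\textbf{Your argument.} You build the retraction by hand, and each step checks out:
\begin{itemize}
\item Compatible families $(\lambda_{n,f})$ are exactly linear functionals on $L=\operatorname{colim}_{[d]\downarrow\mathrm{FI}}X$.
\item The identification $L\cong\operatorname{colim}_n X(n)_{S_{n-d}}$ is right. Every morphism $([n],\iota_n)\to([m],\iota_m)$ fixes $1,\dots,d$, so it is a standard inclusion followed by an element of $S_{m-d}$.
\item Two inputs give $\bar x\neq 0$. Torsion-freeness gives $(\iota_n)_*(x)\neq 0$. In characteristic $0$ the map $V^{G}\to V_{G}$ is an isomorphism for finite $G$. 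Since an element of a sequential colimit of vector spaces vanishes only if it vanishes at some stage, $\bar x\neq 0$.
\end{itemize}

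\textbf{Comparison.} Your approach is self-contained, gives an explicit formula for the retraction, and shows exactly where characteristic $0$ enters (fixed vectors surviving in coinvariants). The paper's approach is a two-line argument, at the cost of a substantial black box. Note that the paper needs injectivity of $M_m$ again in Proposition~\ref{prop6.10}, to identify $\mathrm{Hom}_{\mathrm{D}_{\mathrm{FI}}}(M_m\otimes X,\Sigma^n M_m)$ with a Hom-group in $\mathrm{Mod}_{\mathrm{FI}}$. So your argument removes the dependence on Gan--Li for this lemma only, not for the section as a whole.
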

\begin{proof}For each $[n]$, define $$\alpha_{n}:M_{d}(n)\rightarrow M_{d}(n)\otimes X(n),\quad\quad [f]\mapsto [f]\otimes f_{*}(x).$$ This is natural, hence gives a morphism $\alpha:M_{d}\rightarrow M_{d}\otimes X$. For each $n$, there is a canonical decomposition $$M_{d}(n)\otimes X(n)\cong \bigoplus_{\substack{f\in \mathrm{Inj}([d],[n]) }} X(n)$$ where the summand indexed by $f$ is $k[f]\otimes X(n)$. Under this decomposition, $\alpha_{n}$ is the direct sum of the maps $$k[f]\rightarrow X(n),\quad\quad a[f]\mapsto af_{*}(x).$$ Each of these maps is injective because $f_*(x)\neq 0$ for every $f$ (since $x$ is torsion-free). Hence $\alpha_n$ is injective for every $n$, and so $\alpha$ is injective. By \cite[Theorem 1.5]{GL15}, $M_d$ is injective; therefore $\alpha$ is a split monomorphism.
\end{proof}

We denote by $\mathrm{FI}_{\geq n}$ the full subcategory of $\mathrm{FI}$ consisting of finite sets of cardinality at least
$n$, and by $\mathrm{FI}_{< n}$ the full subcategory consisting of finite sets of cardinality smaller than $n$. Let $i_{n}:\mathrm{FI}_{<n}\hookrightarrow \mathrm{FI}$ be the inclusion of the down-closed subcategory $\mathrm{FI}_{<n}$, with complement $j_{n}:\mathrm{FI}_{\geq n}\hookrightarrow \mathrm{FI}$. We have adjunctions \[
\xymatrix@C=6em@R=0em{
  \mathrm{Fun}(\mathrm{FI}_{<n},\mathrm{Mod}\,k)
  \ar@/^5ex/[r]^{(i_n)_{!}}
  \ar@/_5ex/[r]_{(i_n)_{*}}
  &
  \mathrm{Mod}_{\mathrm{FI}}
  \ar[l]^{(i_n)^*}
}
\]
where $(i_{n})^{*}$ is defined by $(i_{n})^{*}(X)(m)=X(m)$, the left adjoint $(i_{n})_{!}$ of $(i_{n})^{*}$ is given by the left Kan extension along $i_{n}$, and the right adjoint $(i_{n})_{*}$ of $(i_{n})^{*}$ is given by the right Kan extension along $i_{n}$. Passing to the derived categories, we have the following adjunctions \[
\xymatrix@C=6em@R=0em{
  \mathrm{D}(\mathrm{FI}_{<n})
  \ar@/^5ex/[r]^{(i_n)_{!}}
  \ar@/_5ex/[r]_{(i_n)_{*}}
  &
  \mathrm{D}_{\mathrm{FI}}
  \ar[l]^{(i_n)^*}
}
\]
where $\mathrm{D}(\mathrm{FI}_{<n})$ is defined to be the derived category of $\mathrm{Fun}(\mathrm{FI}_{<n},\mathrm{Mod}\,k)$. Similarly, we have adjunctions \[
\xymatrix@C=6em@R=0em{
  \mathrm{D}(\mathrm{FI}_{\geq n})
  \ar@/^5ex/[r]^{(j_n)_{!}}
  \ar@/_5ex/[r]_{(j_n)_{*}}
  &
  \mathrm{D}_{\mathrm{FI}}
  \ar[l]^{(j_n)^*}
}
\]
where $\mathrm{D}(\mathrm{FI}_{\geq n})$ is defined to be the derived category of $\mathrm{Fun}(\mathrm{FI}_{\geq n},\mathrm{Mod}\,k)$. Both $\mathrm{D}(\mathrm{FI}_{<n})$ and $\mathrm{D}(\mathrm{FI}_{\geq n})$ are compactly generated tensor triangulated categories.\\
The proof of each part of the following lemma is standard; we leave the details to the reader.
\begin{lem}\label{lem6.6}For each $n\in \mathbb{N}$, let $i_{n}$ and $j_{n}$ be as above. Then \begin{enumerate}
\item  $(i_{n})^{*}$ and $(j_{n})^{*}$ are symmetric monoidal exact functors;

\item $(i_{n})_{*}$ and $(j_{n})_{!}$ are extensions by zero, and hence they are exact functors and preserve tensor products;

\item $(j_{n})^{*}(j_{n})_{!}\simeq \mathrm{Id}$, $(i_{n})^{*}(j_{n})_{!}\simeq 0$, $(i_{n})^{*}(i_{n})_{*}\simeq \mathrm{Id}$, $(j_{n})^{*}(i_{n})_{*}\simeq 0$;

\item $\mathrm{D}(\mathrm{FI}_{\geq n})=\mathrm{Loc}_{\otimes}\langle M_{n}\rangle$, where $M_{n}$ is seen as an object in $\mathrm{Fun}(\mathrm{FI}_{\geq n},\mathrm{Mod}\,k)$;

\item For any $X\in \mathrm{D}_{\mathrm{FI}}$, there is a triangle $$(j_{n})_{!}(j_{n})^{*}X\rightarrow X\rightarrow (i_{n})_{*}(i_{n})^{*}X.$$

\end{enumerate}
\end{lem}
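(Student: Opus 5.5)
We establish the five parts in order. Each reduces to a pointwise statement about the abelian categories, because all functors involved are restriction or Kan extension along a full inclusion of a down-closed or up-closed subcategory of $\mathrm{FI}$.

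For (1), $(i_n)^*$ and $(j_n)^*$ are precomposition with a functor. They are exact because exactness in functor categories is checked objectwise. They are strictly symmetric monoidal because the tensor product is pointwise and both functors send the constant functor $k$ to the constant functor $k$. Being exact, they pass to derived categories without deriving.

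For (2), we compute the Kan extensions. Take $X\in \mathrm{Fun}(\mathrm{FI}_{<n},\mathrm{Mod}\,k)$ and $m\geq n$. The right Kan extension is
\[
(i_n)_*X(m)=\lim_{([m]\to [a]),\,a<n}X(a).
\]
Since $\mathrm{FI}$ has no morphisms $[m]\to[a]$ with $a<m$, this indexing category is empty, so the limit is $0$. For $m<n$, fullness gives $(i_n)_*X(m)=X(m)$. Dually, take $X\in \mathrm{Fun}(\mathrm{FI}_{\geq n},\mathrm{Mod}\,k)$ and $m<n$. The left Kan extension $(j_n)_!X(m)$ is a colimit over morphisms $[a]\to[m]$ with $a\geq n$. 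There are none, so it is $0$. For $m\geq n$, fullness gives $X(m)$. Hence both functors are extension by zero. Extension by zero is exact, and it preserves the pointwise tensor product because $0\otimes 0=0$. The identities in (3) then follow by evaluating pointwise.

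For (4), the heart of the argument is the projective generators of $\mathrm{Fun}(\mathrm{FI}_{\geq n},\mathrm{Mod}\,k)$. By Yoneda, these are the representables $M_m|_{\mathrm{FI}_{\geq n}}$ for $m\geq n$, so $\mathrm{D}(\mathrm{FI}_{\geq n})=\mathrm{Loc}\langle M_m\mid m\geq n\rangle$; this is the argument of Proposition \ref{prop2.8} applied to $\mathrm{FI}_{\geq n}$. It therefore suffices to show that $M_m\in\mathrm{Loc}_{\otimes}\langle M_n\rangle$ for each $m\geq n$. This follows from the fact, recalled earlier, that $M_m$ is a direct summand of $M_m\otimes M_n$. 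That fact holds pointwise in $\mathrm{FI}$, so it survives restriction to $\mathrm{FI}_{\geq n}$. The main obstacle is to check that this retraction really is natural on the restricted category. I would verify it directly: the map $M_m\to M_m\otimes M_n$ sends $[f]\mapsto \sum_{g}[f]\otimes[g]$, summing over the injections $g:[n]\to[m]$ composed with $f$, followed by a suitable normalisation. Alternatively, one can argue that restriction is a monoidal functor and so preserves retract diagrams.

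For (5), consider the counit $(j_n)_!(j_n)^*X\to X$, which exists because $(j_n)_!$ is left adjoint to $(j_n)^*$. On $\mathrm{FI}_{\geq n}$ it is the identity, and outside $\mathrm{FI}_{\geq n}$ its source is $0$. Its cone therefore agrees with $X$ on $\mathrm{FI}_{<n}$ and vanishes on $\mathrm{FI}_{\geq n}$. The unit $X\to (i_n)_*(i_n)^*X$ is the identity on $\mathrm{FI}_{<n}$. The composite of the counit and the unit vanishes, by the pointwise description above. At the abelian level, there is a short exact sequence $0\to (j_n)_!(j_n)^*X\to X\to (i_n)_*(i_n)^*X\to 0$. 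This is valid because $\mathrm{FI}_{\geq n}$ is up-closed in $\mathrm{FI}$, so the subfunctor supported there is closed under the structure maps. Applying this termwise to complexes gives the distinguished triangle.
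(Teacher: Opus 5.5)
Your proposal is correct. The paper gives no proof of this lemma (it calls each part standard and leaves the details to the reader), and your argument is exactly the standard verification intended: you compute the Kan extensions along the down-closed $\mathrm{FI}_{<n}$ and up-closed $\mathrm{FI}_{\geq n}$ as extensions by zero, you transport the splitting of $M_m$ off $M_m\otimes M_n$ along the strong monoidal restriction $(j_n)^*$ together with the compact generators $M_m|_{\mathrm{FI}_{\geq n}}$, and you use the pointwise short exact sequence $0\to (j_n)_!(j_n)^*X\to X\to (i_n)_*(i_n)^*X\to 0$.

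In (4) the monoidal-restriction argument already suffices, so the explicit retraction you flag as the main obstacle is unnecessary. It also works without any normalisation: use $[f]\mapsto [f]\otimes[f\circ g_0]$ for a fixed injection $g_0:[n]\to[m]$, with retraction $[f]\otimes[g]\mapsto [f]$ if $\mathrm{im}(g)\subseteq\mathrm{im}(f)$ and $0$ otherwise.
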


\begin{df}Let $X\in \mathrm{D}_{\mathrm{FI}}$. The homological support of $X$ is defined by $$\mathrm{hsupp}(X):=\{n\in \mathbb{N}\mid H_{*}(X)(n)\neq 0\}.$$
\end{df}

\begin{rem}\label{rem6.8}One can define homological support for objects in $\mathrm{D}(\mathrm{FI}_{< n})$ and $\mathrm{D}(\mathrm{FI}_{\geq n})$ similarly. For $X,Y\in \mathrm{D}(\mathrm{FI}_{< n})$, $\mathrm{hsupp}(X)\subseteq \mathrm{hsupp}(Y)$ if and only if $X\in \mathrm{Loc}_{\otimes}\langle Y\rangle$.
\end{rem}
\begin{lem}\label{lem6.9}Let $X\in\mathrm{D}_{\mathrm{FI}}$ be non-zero. Then we have $X\in \mathrm{Loc}_{\otimes}\langle M_{\min\{\mathrm{hsupp}(X)\}}\rangle$. If $X$ is compact, then the same inclusion holds with the thick ideal in place of the localizing ideal.
\end{lem}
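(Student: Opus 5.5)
Let $m:=\min\{\mathrm{hsupp}(X)\}$, which exists because $X\neq 0$, so $\mathrm{hsupp}(X)\neq\emptyset$. The plan is to split $X$ with the recollement triangle of Lemma \ref{lem6.6}(5) at level $m$,
$$(j_{m})_{!}(j_{m})^{*}X\rightarrow X\rightarrow (i_{m})_{*}(i_{m})^{*}X,$$
and to show that both outer terms lie in $\mathrm{Loc}_{\otimes}\langle M_{m}\rangle$.

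The right-hand term vanishes. By definition of $m$, $H_{*}(X)(n)=0$ for all $n<m$. Hence $(i_{m})^{*}X$ is acyclic in $\mathrm{D}(\mathrm{FI}_{<m})$, and so $(i_{m})_{*}(i_{m})^{*}X\simeq 0$. The triangle then gives $X\cong (j_{m})_{!}(j_{m})^{*}X$.

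For the remaining term, use Lemma \ref{lem6.6}(4): $(j_{m})^{*}X\in \mathrm{D}(\mathrm{FI}_{\geq m})=\mathrm{Loc}_{\otimes}\langle (j_m)^*M_{m}\rangle$. The functor $(j_{m})_{!}$ is extension by zero, so it is exact, preserves coproducts (it is a left adjoint) and preserves tensor products. It therefore carries $\mathrm{Loc}_{\otimes}\langle (j_m)^*M_m\rangle$ into the localizing subcategory generated by objects $(j_{m})_{!}((j_m)^*M_{m}\otimes W)$ with $W\in \mathrm{D}(\mathrm{FI}_{\geq m})$. One point needs care here: ideal closure in the source uses arbitrary $W$, while in the target we may only tensor with objects of $\mathrm{D}_{\mathrm{FI}}$. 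I would handle this with the identities $W\cong (j_{m})^{*}(j_{m})_{!}W$ and
$$(j_{m})_{!}((j_{m})^{*}M_{m}\otimes (j_{m})^{*}(j_{m})_{!}W)\cong (j_{m})_{!}(j_{m})^{*}M_{m}\otimes (j_{m})_{!}W.$$
This reduces everything to showing $(j_{m})_{!}(j_{m})^{*}M_{m}\in\mathrm{Loc}_{\otimes}\langle M_{m}\rangle$. But $M_{m}(n)=0$ for $n<m$, so $(i_m)^*M_m=0$, and the triangle of Lemma \ref{lem6.6}(5) gives $(j_{m})_{!}(j_{m})^{*}M_{m}\cong M_{m}$. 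Hence $X\cong (j_m)_!(j_m)^*X\in \mathrm{Loc}_{\otimes}\langle M_{m}\rangle$. The main subtlety is this transfer of tensor-ideal closure through $(j_m)_!$. The cleanest way to organize it is to note that $\{W\mid (j_m)_!W\in \mathrm{Loc}_\otimes\langle M_m\rangle\}$ is a localizing ideal of $\mathrm{D}(\mathrm{FI}_{\geq m})$ containing $(j_m)^*M_m$, which follows from the displayed identity.

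For the compact statement, I would run the same argument with thick ideals in place of localizing ideals, but the generation statement in Lemma \ref{lem6.6}(4) is only localizing, so I would argue directly instead. For $X$ perfect, $X$ is a bounded complex of finitely generated projectives, each a finite sum of $P_\lambda$'s, hence each a summand of a sum of $M_d$'s. I expect the hard step to be arranging that only $d\ge m$ occur. The fact that $M_d$ is a summand of $M_d\otimes M_m$ for $d\ge m$ puts any such $M_d$ in $\mathrm{thick}_\otimes\langle M_m\rangle$. To get rid of generators with $d<m$, I would try to show $X\cong X\otimes M_m$ up to thick closure, i.e.\ that $X$ is a summand of $X\otimes M_m$, which lies in $\mathrm{thick}_\otimes\langle M_m\rangle$. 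My first attempt would be to exploit that $X(n)\simeq 0$ for $n<m$, building a minimal model of $X$ by inductively splitting off contractible summands generated in degree $<m$, using Lemma \ref{lem6.1} to show that the degree-$<m$ generators form a subcomplex that must be contractible. The conclusion would then follow from the identity $M_m\otimes M_d\supseteq M_d$ for $d\geq m$ and closure of thick ideals under summands.
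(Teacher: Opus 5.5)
Your first part is the paper's argument: the triangle of Lemma \ref{lem6.6}(5) at level $m$, the vanishing of $(i_m)_*(i_m)^*X$, and $(j_m)_!(j_m)^*X\in\mathrm{Loc}_{\otimes}\langle (j_m)_!M_m\rangle=\mathrm{Loc}_{\otimes}\langle M_m\rangle$. Your preimage localizing ideal $\{W\mid (j_m)_!W\in \mathrm{Loc}_{\otimes}\langle M_m\rangle\}$ spells out the transfer of ideal closure through $(j_m)_!$, which the paper uses without comment.

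\textbf{Compact statement.} Here you diverge from the paper, and as written there is a gap.

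\emph{The formal route.} The reason you give for avoiding a formal argument is not a real obstacle. The paper gets the thick statement directly from the localizing one via the Neeman--Thomason localization theorem. Compacts are closed under $\otimes$, so $\mathrm{Loc}_{\otimes}\langle M_m\rangle=\mathrm{Loc}\langle M_m\otimes C\mid C\in\mathrm{D}_{\mathrm{FI}}^{c}\rangle$ is generated by a set of compact objects. Hence its compact objects are exactly $\mathrm{thick}\langle M_m\otimes C\mid C\in\mathrm{D}_{\mathrm{FI}}^{c}\rangle=\mathrm{thick}_{\otimes}\langle M_m\rangle$. No thick analogue of Lemma \ref{lem6.6}(4) is needed.

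\emph{Your chain-level route.} It can be made to work, but its key step is asserted rather than proved.
\begin{itemize}
\item Lemma \ref{lem6.1} does show that the summands $Y^{\bullet}$ generated in degrees $<m$ form a subcomplex, because maps from lower-degree to higher-degree generators vanish.
\item The quotient $Z^{\bullet}$ is generated in degrees $\geq m$, so it vanishes at levels $<m$. Hence $Y^{\bullet}(n)\cong X(n)$ is acyclic for $n<m$.
\item Nothing in Lemma \ref{lem6.1} then makes $Y^{\bullet}$ contractible, since you know nothing about $Y^{\bullet}(n)$ for $n\geq m$.
\end{itemize}
The missing input is that such a complex is determined by its low levels:
\begin{itemize}
\item For $|\mu|<m$, the left Kan extension of a representable is representable, so $(i_m)_!(i_m)^*P_\mu\cong P_\mu$. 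Hence the counit gives $(i_m)_!(i_m)^*Y^{\bullet}\cong Y^{\bullet}$ as complexes.
\item $(i_m)^*Y^{\bullet}$ is a bounded acyclic complex of projectives over $\mathrm{FI}_{<m}$, hence contractible.
\item The additive functor $(i_m)_!$ preserves contractibility, so $Y^{\bullet}$ is contractible.
\end{itemize}
With this, the degreewise split sequence $0\to Y^{\bullet}\to X\to Z^{\bullet}\to 0$ gives $X\simeq Z^{\bullet}$. Your summand argument then finishes: for $d\geq m$, each $P_\lambda$ with $\lambda\vdash d$ is a summand of $M_d$, which is a summand of $M_d\otimes M_m$.

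\textbf{Comparison.} The completed direct route gives a slightly stronger, explicit model: a perfect $X$ with $X(n)\simeq 0$ for $n<m$ is homotopy equivalent to a bounded complex of projectives generated in degrees $\geq m$. For the lemma as stated, the one-line Neeman--Thomason argument is all that is needed.
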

\begin{proof}Set $n:=\min\{\mathrm{hsupp}(X)\}$. By Lemma \ref{lem6.6} there exists a triangle $$(j_{n})_{!}(j_{n})^{*}X\rightarrow X\rightarrow (i_{n})_{*}(i_{n})^{*}X.$$ By the definition of $\mathrm{hsupp}$ and the minimality of $n$, $(i_{n})^{*}X\cong 0$ and so $X\cong (j_{n})_{!}(j_{n})^{*}X$. It follows from the same lemma that $$(j_{n})_{!}(j_{n})^{*}X\in (j_{n})_{!}\mathrm{Loc}_{\otimes}\langle M_{n}\rangle\subseteq \mathrm{Loc}_{\otimes}\langle (j_{n})_{!}M_{n}\rangle=\mathrm{Loc}_{\otimes}\langle M_{n}\rangle.$$ This proves the first claim. The claim about compact $X$ is a formal consequence of the first claim and the Neeman-Thomason localization theorem.
\end{proof}

\begin{prop}\label{prop6.10}For $X\in \mathrm{D}_{\mathrm{FI}}$, if there exist some $n\in \mathbb{Z}$, $m\in \mathbb{N}$ and $x\in H_{n}(X)(m)$ such that $x$ is not torsion, then $M_{m}\in \mathrm{thick}_{\otimes}\langle X\rangle$.
\end{prop}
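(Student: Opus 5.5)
We follow the argument used in the proof of Theorem \ref{thm5.3} for $E_{p}$, with Lemma \ref{lem6.5} replacing \cite[Lemma 7.10]{BBP+25a}. The plan is to show that $M_{m}$ is a retract of $M_{m}\otimes X$. Since $\mathrm{thick}_{\otimes}\langle X\rangle$ is closed under tensoring with arbitrary objects and under retracts, this gives $M_{m}\in \mathrm{thick}_{\otimes}\langle X\rangle$.

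\textbf{Step 1 (abelian level).} Applying Lemma \ref{lem6.5} to the $\mathrm{FI}$-module $H_{n}(X)$ and the torsion-free element $x\in H_{n}(X)(m)$, we obtain a split monomorphism
\[\alpha: M_{m}\rightarrow M_{m}\otimes H_{n}(X).\]
Let $\beta$ be a retraction, so $\beta\alpha=\mathrm{id}$. Every object of $\mathrm{Mod}_{\mathrm{FI}}$ is flat, so the K\"{u}nneth formula gives
\[H_{n}(M_{m}\otimes X)\cong M_{m}\otimes H_{n}(X).\]
Hence $M_{m}$ is a retract of $H_{n}(M_{m}\otimes X)$.

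\textbf{Step 2 (lifting to the derived category).} We need maps $M_{m}[n]\to M_{m}\otimes X$ and $M_{m}\otimes X\to M_{m}[n]$ whose composite is the identity.

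For the first map, $M_{m}$ is projective, so
\[\mathrm{Hom}_{\mathrm{D}_{\mathrm{FI}}}(\Sigma^{n}M_{m},Y)\cong \mathrm{Hom}(M_{m},H_{n}(Y))\]
for every $Y$. This lifts $\alpha$ to a map $a:\Sigma^{n}M_{m}\to M_{m}\otimes X$ inducing $\alpha$ on $H_{n}$.

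For the second map, $M_{m}$ is also injective by \cite[Theorem 1.5]{GL15}. Dually,
\[\mathrm{Hom}_{\mathrm{D}_{\mathrm{FI}}}(Y,\Sigma^{n}M_{m})\cong \mathrm{Hom}(H_{n}(Y),M_{m}),\]
because $\mathrm{Hom}(-,M_{m})$ is exact and $M_{m}$ is an injective (hence K-injective when placed in one degree) object. This lifts $\beta$ to a map $b:M_{m}\otimes X\to \Sigma^{n}M_{m}$.

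The composite $ba$ is an endomorphism of $\Sigma^{n}M_{m}$. Endomorphisms of $\Sigma^{n}M_{m}$ are detected on $H_{n}$, and there $ba$ induces $\beta\alpha=\mathrm{id}$, so $ba=\mathrm{id}$. Thus $\Sigma^{n}M_{m}$, and therefore $M_{m}$, is a retract of $M_{m}\otimes X$, which lies in $\mathrm{thick}_{\otimes}\langle X\rangle$.

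\textbf{Main obstacle.} The only delicate point is the identification $\mathrm{Hom}_{\mathrm{D}}(Y,\Sigma^{n}M_{m})\cong\mathrm{Hom}(H_{n}(Y),M_{m})$ for an unbounded complex $Y$. This requires $M_{m}$ to be injective in the whole Grothendieck category $\mathrm{Mod}_{\mathrm{FI}}$, not only among finitely generated modules. The reference \cite[Theorem 1.5]{GL15} is what supplies this, and Lemma \ref{lem6.5} already relies on it. Given injectivity, a map $Y\to M_{m}[n]$ into a single-degree injective complex is computed by the homotopy category: it factors through $Z_{n}(Y)$ and kills boundaries, which yields exactly the displayed isomorphism.
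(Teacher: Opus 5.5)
Your proposal is correct and is essentially the paper's proof. Lemma \ref{lem6.5} together with the K\"unneth isomorphism $H_n(M_m\otimes X)\cong M_m\otimes H_n(X)$ makes $M_m$ a retract of $H_n(M_m\otimes X)$, and the projectivity and injectivity of $M_m$ lift the split monomorphism and its retraction to maps $\Sigma^n M_m\to M_m\otimes X\to \Sigma^n M_m$. You also make explicit a step the paper leaves implicit: the composite is the identity because endomorphisms of $\Sigma^n M_m$ are detected on $H_n$.
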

\begin{proof}By assumption and Lemma \ref{lem6.5}, $M_{m}$ is a retract of $M_{m}\otimes H_{n}(X)$. On the other hand, as $M_{m}$ is both injective and projective, we have $$\mathrm{Hom}_{\mathrm{D}_{\mathrm{FI}}}(\Sigma^{n}M_{m},M_{m}\otimes X)\cong \mathrm{Hom}_{\mathrm{Mod}_{\mathrm{FI}}}(M_{m},H_{n}(M_{m}\otimes X))$$ and $$\mathrm{Hom}_{\mathrm{D}_{\mathrm{FI}}}(M_{m}\otimes X,\Sigma^{n}M_{m})\cong \mathrm{Hom}_{\mathrm{Mod}_{\mathrm{FI}}}(H_{n}(M_{m}\otimes X),M_{m}).$$ Then we have $M_{m}\in \mathrm{thick}_{\otimes}\langle M_{m}\otimes X\rangle\subseteq \mathrm{thick}_{\otimes}\langle X\rangle$, since $M_{m}$ is a retract of the object $H_{n}(M_{m}\otimes X)\cong M_{m}\otimes H_{n}(X)$.
\end{proof}

\begin{prop}\label{prop6.11}Let $X,Y\in \mathrm{D}_{\mathrm{FI}}^{c}$. Then $\mathrm{hsupp}(X)\subseteq \mathrm{hsupp}(Y)$ if and only if $X\in \mathrm{thick}_{\otimes}\langle Y\rangle$.
\end{prop}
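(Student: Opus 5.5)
The easy direction comes first. If $X\in\mathrm{thick}_{\otimes}\langle Y\rangle$, then $\mathrm{hsupp}(X)\subseteq\mathrm{hsupp}(Y)$. Indeed, the subcategory $\{Z\mid \mathrm{hsupp}(Z)\subseteq\mathrm{hsupp}(Y)\}$ is a thick tensor ideal: homology is computed pointwise, and each object is flat, so $H_*(Z\otimes W)(n)\cong H_*(Z)(n)\otimes H_*(W)(n)$ by Künneth over $k$. Hence this subcategory contains $\mathrm{thick}_{\otimes}\langle Y\rangle$.

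For the converse, assume $\mathrm{hsupp}(X)\subseteq\mathrm{hsupp}(Y)$ with $X,Y$ compact and $X\neq0$. I would argue by induction on the "top part" of the supports, using the recollement triangle of Lemma \ref{lem6.6}(5).

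\textbf{Step 1: the tail of $X$.} Since $Y\neq0$ is perfect, Theorem \ref{thm6.4} gives some $n$ and a torsion-free element $y\in H_n(Y)(m)$. Proposition \ref{prop6.10} then yields $M_m\in\mathrm{thick}_{\otimes}\langle Y\rangle$. Since $y$ is torsion-free, $f_*(y)\neq0$ for all injections $f$, so $\{m,m+1,\dots\}\subseteq\mathrm{hsupp}(Y)$. Choose $N\geq m$ large. Apply the triangle $(j_N)_!(j_N)^*X\to X\to(i_N)_*(i_N)^*X$. Then $(j_N)_!(j_N)^*X\cong X\otimes(j_N)_!(j_N)^*1$, and by Lemma \ref{lem6.9} applied to the compact object $(j_N)_!(j_N)^*M_0$ (if it is compact) it lies in $\mathrm{thick}_{\otimes}\langle M_N\rangle\subseteq\mathrm{thick}_{\otimes}\langle M_m\rangle$. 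Here we use that $M_N$ is a summand of $M_N\otimes M_m$. The cleaner route is this: $(j_N)_!(j_N)^*1$ is the cofiber of the finite-length torsion complex $(i_N)_*(i_N)^*1\to$, so one must check compactness of $\chi$-type objects. I expect this to be a subtle point, since in $E_p$ the analogous $\chi$'s are not compact. To avoid this, I would instead work with $X\otimes M_m$ directly: $M_m$ is compact, and $X\otimes M_m\in\mathrm{thick}_{\otimes}\langle M_m\rangle\subseteq\mathrm{thick}_{\otimes}\langle Y\rangle$.

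\textbf{Step 2: reduce to smaller minimal support.} Use Lemma \ref{lem6.9} in the form $X\in\mathrm{thick}_{\otimes}\langle M_{a}\rangle$ with $a=\min\mathrm{hsupp}(X)$. The plan is to prove the stronger claim that $M_a\in\mathrm{thick}_{\otimes}\langle Y\rangle$ whenever $a\in\mathrm{hsupp}(Y)$ (for compact $Y$), by descending induction on $a$ from $m$. This is the main obstacle. For $a\ge m$ it holds since $M_a$ is a summand of $M_a\otimes M_m$. For $a<m$, I would attempt a Koszul-type argument. Form $Y'=\mathrm{cone}(Y\otimes M_{a+1}\text{-part})$, or more concretely tensor $Y$ with the compact cofibre $c_{a}:=\mathrm{cof}(M_{a+1}\otimes M_a\to M_a)$-style objects. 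The aim is to isolate the degree-$a$ piece $H_*(Y)(a)$, an $S_a$-representation $V$. Then $M_{a,V}$, and hence $M_a$ (via $V\otimes V^*\supseteq k$ and semisimplicity, as in Lemma \ref{lem3.1}), would lie in the thick ideal modulo $\mathrm{thick}_{\otimes}\langle M_{a+1}\rangle$.

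Concretely, I would pass to the Verdier quotient $\mathrm{D}^c_{\mathrm{FI}}/\mathrm{thick}_{\otimes}\langle M_{a+1}\rangle$. Here I would use that $M_{a+1}\in\mathrm{thick}_{\otimes}\langle Y\rangle$ by induction, applied to $a+1\in\mathrm{hsupp}(Y)$; that this membership holds is itself part of what needs checking. In this quotient, perfect complexes supported in degrees $\ge a$ should behave like $\mathrm{D}^b(k[S_a])$, via $(i_{a+1})^*$ and Lemma \ref{lem6.3}-style filtration by generating degree, where $P_\lambda$ with $|\lambda|=a$ contribute the complexes $C^\bullet_\lambda$. There the image of $Y$ has nonzero homology at $a$, so it generates the unit, giving $M_a\in\mathrm{thick}_{\otimes}\langle Y,M_{a+1}\rangle$. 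Combining this with Lemma \ref{lem6.9} for $X$ would finish the proof. I expect the identification of this quotient, and the handling of degrees in $\mathrm{hsupp}(Y)$ that are not contiguous, to be the hardest step.
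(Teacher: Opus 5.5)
Your easy direction is fine. Step 1 also correctly gives some $M_m\in\mathrm{thick}_{\otimes}\langle Y\rangle$ from Theorem \ref{thm6.4} and Proposition \ref{prop6.10}.

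\textbf{The gap is in Step 2.} Your ``stronger claim'' that $M_a\in\mathrm{thick}_{\otimes}\langle Y\rangle$ whenever $a\in\mathrm{hsupp}(Y)$ is false. Since $\mathrm{hsupp}(M_a)=\{a,a+1,\dots\}$, the claim would force every nonzero compact $Y$ to have support exactly $\{\min\mathrm{hsupp}(Y),\min\mathrm{hsupp}(Y)+1,\dots\}$.

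A counterexample is $Y=\mathrm{cof}(M_{1}\to 1)$, where the map at level $n$ is the augmentation $k^{n}\to k$. This object is compact, and the augmentation is an isomorphism exactly at $n=1$, so $\mathrm{hsupp}(Y)=\mathbb{N}\setminus\{1\}$. Then $0\in\mathrm{hsupp}(Y)$, but $M_0=1\notin\mathrm{thick}_{\otimes}\langle Y\rangle$ by the easy direction.

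For the same reason, your final combination cannot work. Bounding $X$ above by $M_{\min\mathrm{hsupp}(X)}$ via Lemma \ref{lem6.9} discards all of the hypothesis $\mathrm{hsupp}(X)\subseteq\mathrm{hsupp}(Y)$ except the minimum. Already for $X=Y$ as above, your route would require $1\in\mathrm{thick}_{\otimes}\langle Y\rangle$. The descending induction breaks exactly at the gaps you flagged, where $a+1\notin\mathrm{hsupp}(Y)$.

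\textbf{The missing idea} is to use the Verdier quotient from your last paragraph once, at a single large level, rather than degree by degree.

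\begin{enumerate}
\item Choose $N\geq m$. Then $M_N$ is a summand of $M_N\otimes M_m$, so $M_N\in\mathrm{thick}_{\otimes}\langle Y\rangle$.
\item By Lemma \ref{lem6.9}, the kernel of $(i_N)^*$ on compacts is $\mathrm{thick}_{\otimes}\langle M_N\rangle$. Hence $\mathrm{D}_{\mathrm{FI}}^{c}/\mathrm{thick}_{\otimes}\langle M_N\rangle\simeq\mathrm{D}(\mathrm{FI}_{<N})^{c}$.
\item The indexing category $\mathrm{FI}_{<N}$ has finitely many objects. So, as in Proposition \ref{prop3.2} and Remark \ref{rem6.8}, thick ideals there are detected by homological support.
\item Therefore $\mathrm{hsupp}((i_N)^*X)\subseteq\mathrm{hsupp}((i_N)^*Y)$ gives $(i_N)^*X\in\mathrm{thick}_{\otimes}\langle (i_N)^*Y\rangle$.
\item Lifting along the quotient gives $X\in\mathrm{thick}_{\otimes}\langle Y,M_N\rangle=\mathrm{thick}_{\otimes}\langle Y\rangle$.
\end{enumerate}

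This is the paper's argument. It handles non-contiguous supports automatically and needs no statement about $M_a$ for small $a$.
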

\begin{proof}The $"\Leftarrow"$ implication is clear. For the reverse implication, suppose $\mathrm{hsupp}(X)\subseteq \mathrm{hsupp}(Y)$. By Theorem \ref{thm6.4} and Proposition \ref{prop6.10}, there exist $n_{1},n_{2}\in \mathbb{N}$ such that $M_{n_{1}}\in \mathrm{thick}_{\otimes}\langle X\rangle$ and $M_{n_{2}}\in \mathrm{thick}_{\otimes}\langle Y\rangle$. Set $n:=\max\{n_{1},n_{2}\}$.  Consider the restricted functor $$(i_{n})^{*}:\mathrm{D}_{\mathrm{FI}}^{c}\rightarrow \mathrm{D}(\mathrm{FI}_{<n})^{c}.$$ It follows from Lemma \ref{lem6.9} that $\mathrm{Ker}((i_{n})^{*})=\mathrm{thick}_{\otimes}\langle M_{n}\rangle\subseteq \mathrm{D}_{\mathrm{FI}}^{c}$, and there is a tensor equivalence $$\mathrm{D}_{\mathrm{FI}}^{c}/\mathrm{thick}_{\otimes}\langle M_{n}\rangle\simeq \mathrm{D}(\mathrm{FI}_{<n})^{c}.$$ We also have $(i_{n})^{*}X\in \mathrm{thick}_{\otimes}\langle (i_{n})^{*}Y \rangle$, since $\mathrm{hsupp}((i_{n})^{*}X)\subseteq \mathrm{hsupp}((i_{n})^{*}Y)$. This implies that $X\in \mathrm{thick}_{\otimes}\langle Y, M_{n}\rangle\subseteq \mathrm{D}_{\mathrm{FI}}^{c}$. Since $M_{n}\in \mathrm{thick}_{\otimes}\langle M_{n_{2}}\rangle\subseteq\mathrm{thick}_{\otimes}\langle Y\rangle$, we have $X\in \mathrm{thick}_{\otimes}\langle Y\rangle$, which concludes the proof.

\end{proof}

We denote by $M_{n,-}: \mathrm{D}(k[S_{n}])\rightarrow \mathrm{D}_{\mathrm{FI}}$ the derived functor of $M_{n,-}: \mathrm{Mod}_{k[S_{n}]}\rightarrow \mathrm{Mod}_{\mathrm{FI}}$. It is the left adjoint of the evaluation functor $ev_{n}:\mathrm{D}_{\mathrm{FI}}\rightarrow \mathrm{D}(k[S_{n}]),\quad X\mapsto X(n)$. The object $\chi_{n,k}$ is defined as follows: $\chi_{n,k}(n)=k$ and $\chi_{n,k}(m)=0$ for $m\neq n$. For $V\in \mathrm{D}(k[S_{n}])$, set $\chi_{n,V}:=M_{n,V}\otimes \chi_{n,k}$. An argument similar to the one in the proof of Lemma \ref{lem3.1} shows that $\mathrm{Loc}_{\otimes}\langle \chi_{n,V}\rangle=\mathrm{Loc}_{\otimes}\langle \chi_{n,k}\rangle\subseteq \mathrm{D}_{\mathrm{FI}}$ whenever $V$ is non-zero. For a localizing ideal $\mathcal{L}\subseteq \mathrm{D}_{\mathrm{FI}}$, we write $S(\mathcal{L}):=\{n\in \mathbb{N}\mid \chi_{n,k}\in \mathcal{L}\}$.   We set $$T^{D}:=\mathrm{Loc}_{\otimes}\langle \chi_{i,k}\mid i\in \mathbb{N}\rangle\subseteq \mathrm{D}_{\mathrm{FI}}$$ and $$T^{A}:=\mathrm{Loc}_{\otimes}\langle \chi_{i,k}\mid i\in \mathbb{N}\rangle\subseteq \mathrm{Mod}_{\mathrm{FI}}.$$

We omit the proof of the following lemma, which is the FI-analogue of Lemma \ref{lem5.2}.
\begin{lem}\label{lem6.12} Let $X\in \mathrm{Mod}_{\mathrm{FI}}$. Then we have the following:
\begin{enumerate}
\item $X\in T^{A}$ if and only if $X$ is torsion;

\item If $X$ is not torsion, then $\mathrm{hsupp}(X)$ is cofinite.

\end{enumerate}
\end{lem}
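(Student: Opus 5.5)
The plan follows the proof of Lemma \ref{lem5.2}, with the structure theory of $\mathrm{FI}$-modules from \cite{CEFN14} replacing \cite[Theorem B]{PS22}.

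\textbf{Part (1).} For the forward implication we show that the torsion $\mathrm{FI}$-modules form a localizing tensor ideal of $\mathrm{Mod}_{\mathrm{FI}}$ containing every $\chi_{i,k}$.
\begin{enumerate}
\item The class of torsion modules is closed under subobjects, quotients and extensions. For extensions: if $x\in X(n)$ maps to an element killed by $\alpha_*$, then $\alpha_*x$ lies in the torsion submodule and is killed by a further injection.
\item It is closed under coproducts, hence under filtered colimits, because each element lives in a finite sum.
\item It is closed under tensoring with any $Y$. An element $\sum x_j\otimes y_j$ of $(X\otimes Y)(n)$ is killed by a single injection. To see this, compose the injections killing each $x_j$: since $\alpha_*(x)=0$ implies $(\beta\alpha)_*(x)=0$, and any two injections out of $[n]$ can be factored past a common larger set up to the $S$-action, which preserves torsion.
\item Each $\chi_{i,k}$ is torsion, as every element dies under any injection $[i]\to[i+1]$.
\end{enumerate}
Hence $T^{A}$ consists of torsion modules.

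For the converse, take a torsion $X$ and write it as the filtered colimit of its finitely generated submodules $X_i$. Each $X_i$ is torsion. Since $\mathrm{Mod}_{\mathrm{FI}}$ is locally noetherian, \cite[Theorem B]{CEFN14} (finitely generated torsion modules vanish in large degrees) gives $X_i(n)=0$ for $n\ge r_i$. So $\mathrm{hsupp}(X_i)$ is finite. It then suffices to show that a module $Z$ with finite support lies in $T^{A}$, and we argue by induction on the number of nonzero degrees. Let $m=\max\mathrm{hsupp}(Z)$. The values $Z(m)$ form a submodule, because maps out of degree $m$ land in zero. This submodule is a direct sum of modules $\chi_{m,V}$ with $V$ an irreducible $S_m$-representation, using semisimplicity of $k[S_m]$. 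Each $\chi_{m,V}=M_{m,V}\otimes\chi_{m,k}$ lies in the Serre tensor ideal generated by $\chi_{m,k}$. The quotient of $Z$ by this submodule has smaller support, so the induction closes by extension-closure. Finally $X\in T^{A}$, since $T^{A}$ is closed under filtered colimits.

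\textbf{Part (2).} Let $x\in X(m)$ be torsion-free. For every injection $f\colon[m]\to[n]$ the element $f_*(x)$ is nonzero, since otherwise $x$ would be torsion. Therefore $X(n)\neq 0$ for all $n\ge m$, and $\mathrm{hsupp}(X)\supseteq\{m,m+1,\dots\}$ is cofinite.

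\textbf{Main obstacle.} The only nonformal input is the vanishing in large degrees of finitely generated torsion modules. I would quote \cite[Theorem B]{CEFN14} for it, exactly as in Theorem \ref{thm6.4}.
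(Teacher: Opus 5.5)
Your proposal is correct and is essentially the paper's argument (the paper omits this proof and points to Lemma~\ref{lem5.2}). The forward direction is the closure of torsion modules under the localizing-ideal operations, and the converse passes to finitely generated submodules that vanish in large degrees; your top-degree induction via the submodule $\chi_{m,Z(m)}=M_{m,Z(m)}\otimes\chi_{m,k}$ just unpacks the paper's appeal to $P_{\infty}$ and \cite[Lemma 5.5]{Xu26}, and part (2) is identical. The vanishing you flag as the main obstacle is actually elementary: a torsion generator $x_j\in X(n_j)$ killed by $\alpha_j\colon[n_j]\to[m_j]$ is killed by every injection $[n_j]\to[n]$ with $n\ge m_j$, since each such injection factors through $\alpha_j$.
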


\begin{thm}\label{thm6.13} Let $\mathcal{L}\subseteq \mathrm{D}_{\mathrm{FI}}$ be a localizing ideal. Then we have the following:
 \begin{enumerate}
\item  If $\mathcal{L}\subseteq T^{D}$, then $\mathcal{L}$ is torsion;

\item If $\mathcal{L}\nsubseteq T^{D}$ , then $S(\mathcal{L})$ is cofinite and $\mathcal{L}=\mathrm{Loc}_{\otimes}\langle M_{\Delta_{S(\mathcal{L})}}, \chi_{i,k}\mid i \in S(\mathcal{L})\rangle$.

\end{enumerate}
 In particular, there are canonical bijections between the following five classes:

\begin{enumerate}
\item[(a)] localizing ideals of $\mathrm{D}_{\mathrm{FI}}$;
\item[(b)] open subsets of $\mathrm{Spc}(\mathrm{Mod}_{\mathrm{FI}})^{c})$;
\item[(c)] open subsets of $\mathbb{N}^{*}$;
\item[(d)] localizing ideals of $\mathrm{Mod}_{\mathrm{FI}}$;
\item[(e)] Serre ideals of $\mathrm{Mod}_{\mathrm{FI}}^{c}$.
\end{enumerate}
\end{thm}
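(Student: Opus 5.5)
The plan is to copy the proof of Theorem \ref{thm5.3}, replacing $e_{n}$ by $M_{n}$ and using the FI-specific inputs of this section: Lemma \ref{lem6.5}, Proposition \ref{prop6.10}, Lemma \ref{lem6.6}, Lemma \ref{lem6.9} and Lemma \ref{lem6.12}.

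\textbf{Torsion part.} First I will identify $T^{D}$ with the objects whose homology lies in $T^{A}$, i.e.\ is torsion (Lemma \ref{lem6.12}), exactly as in Theorem \ref{thm4.15}.
\begin{itemize}
\item A torsion FI-module is the filtered colimit of its finitely generated torsion submodules.
\item Each such submodule has finite support by \cite[Theorem B]{CEFN14}, so it lies in the localizing ideal generated by the $\chi_{i,k}$ with $i$ in its support. To see this, restrict along $i_{N}$ for $N$ large and apply Remark \ref{rem6.8} in $\mathrm{D}(\mathrm{FI}_{<N})$; then push forward by the tensor-compatible extension by zero $(i_{N})_{*}$.
\item Bounded truncation triangles and homotopy colimits then give the identification.
\end{itemize}
For (1), if $\mathcal{L}\subseteq T^{D}$ and $X\in\mathcal{L}$, each $H_{i}(X)$ is torsion with support in $\mathrm{hsupp}(X)$. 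By the previous step, $H_{i}(X)$ lies in $\mathrm{Loc}_{\otimes}\langle\chi_{i,k}\mid i\in\mathrm{hsupp}(X)\rangle$. Truncations again give the same for $X$. Conversely $\chi_{n,k}\in\mathrm{Loc}_{\otimes}\langle X\rangle$ for $n\in\mathrm{hsupp}(X)$: the object $X\otimes\chi_{n,k}\cong\chi_{n,X(n)}$, and the analogue of Lemma \ref{lem3.1} applies. So $\mathcal{L}$ is torsion.

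\textbf{Non-torsion part.} For (2), pick $Y\in\mathcal{L}\setminus T^{D}$. Then some $H_{d}(Y)$ is non-torsion, so it has a torsion-free element in some degree $m$. Proposition \ref{prop6.10} gives $M_{m}\in\mathcal{L}$. Since $M_{m}\otimes\chi_{n,k}$ has $\chi_{n,k}$ as a summand after tensoring with a dual, we get $\chi_{n,k}\in\mathcal{L}$ for all $n\ge m$. Hence $S(\mathcal{L})$ is cofinite; set $\Delta=\Delta_{S(\mathcal{L})}\le m$.

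To get $M_{\Delta}\in\mathcal{L}$, use the triangle of Lemma \ref{lem6.6}(5):
\[(j_{m})_{!}(j_{m})^{*}M_{\Delta}\to M_{\Delta}\to (i_{m})_{*}(i_{m})^{*}M_{\Delta}.\]
\begin{itemize}
\item The first term lies in $(j_{m})_{!}\mathrm{Loc}_{\otimes}\langle M_{m}\rangle\subseteq\mathrm{Loc}_{\otimes}\langle M_{m}\rangle$ by Lemma \ref{lem6.6}(4).
\item The third term has homological support $\{\Delta,\dots,m-1\}\subseteq S(\mathcal{L})$. By Remark \ref{rem6.8} in $\mathrm{D}(\mathrm{FI}_{<m})$ and extension by zero, it lies in $\mathrm{Loc}_{\otimes}\langle\chi_{i,k}\mid i\in S(\mathcal{L})\rangle$.
\end{itemize}
This proves the inclusion "$\supseteq$".

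For the reverse inclusion, take $Z\in\mathcal{L}$; the torsion case is already handled. If $Z$ is non-torsion, then $n=\Delta_{\mathrm{hsupp}(Z)}\ge\Delta$. Apply the same triangle with $j_{n},i_{n}$:
\begin{itemize}
\item $(j_{n})_{!}(j_{n})^{*}Z\in\mathrm{Loc}_{\otimes}\langle M_{n}\rangle$, and $M_{n}\in\mathrm{Loc}_{\otimes}\langle M_{\Delta}\rangle$ by Lemma \ref{lem6.9}.
\item The $i_{n}$-part has support in $S(\mathcal{L})$ and is handled as before.
\end{itemize}

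\textbf{Bijections.} Statements (1) and (2) show that $\mathcal{L}$ is determined by $S(\mathcal{L})$. The possible values are arbitrary subsets of $\mathbb{N}$ (torsion case) or cofinite subsets together with $\infty$ (non-torsion case), which are exactly the open subsets of $\mathbb{N}^{*}$; this gives (a)$\leftrightarrow$(c). Lemma \ref{lem4.12} gives (d)$\leftrightarrow$(e), since $\mathrm{Mod}_{\mathrm{FI}}$ is locally noetherian and finitely generated modules are closed under $\otimes$. The links (b)$\leftrightarrow$(c)$\leftrightarrow$(e) go through \cite[Corollary 2.8]{BKS}, which identifies Serre ideals with open subsets of the tensor abelian spectrum. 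It then remains to show $\mathrm{Spc}(\mathrm{Mod}_{\mathrm{FI}}^{c})\simeq\mathbb{N}^{*}$, with primes $P_{n}$ and $P_{\infty}$ (the torsion modules).

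\textbf{Main obstacle.} I expect the hardest step to be this spectrum computation, since no FI-analogue of \cite[Theorem 5.7]{Xu26} is stated earlier. I would prove it as in Proposition \ref{prop4.11}, with four ingredients:
\begin{itemize}
\item $\chi_{i}\otimes\chi_{j}=0$ for $i\ne j$;
\item $\chi_{i}\otimes M_{i+1}=0$;
\item the quotient by $P_{\infty}$ is semisimple with only trivial Serre ideals, using Lemma \ref{lem6.5};
\item the quotient by $P_{n}$ is $\mathrm{mod}\,k[S_{n}]$, so $P_{n}$ is maximal.
\end{itemize}
A secondary point to check is that $(i_{N})_{*}$ and $(j_{n})_{!}$ carry localizing ideals into localizing ideals; this follows from their exactness, coproduct preservation and tensor compatibility in Lemma \ref{lem6.6}.
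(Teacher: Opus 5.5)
Your treatment of (1), (2) and of the bijections (a)$\leftrightarrow$(c) and (d)$\leftrightarrow$(e) is the paper's argument. It uses the same description of $T^{D}$ by torsion homology, Proposition~\ref{prop6.10} to get $M_{m}\in\mathcal{L}$, and the triangles of Lemma~\ref{lem6.6}(5) for $M_{\Delta}$ and for $Z$. Your check that $\chi_{n,k}\in\mathcal{L}$ for $n\ge m$, via $M_{m}\otimes\chi_{n,k}\cong\chi_{n,M_{m}(n)}$, usefully fills in the paper's bare ``thus $S(\mathcal{L})$ is cofinite''.

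One citation is off. Lemma~\ref{lem6.9} applied to $M_{n}$ only gives $M_{n}\in\mathrm{Loc}_{\otimes}\langle M_{n}\rangle$. To get $M_{n}\in\mathrm{Loc}_{\otimes}\langle M_{\Delta}\rangle$ for $n\ge\Delta$, rerun its proof with $\Delta$ in place of $\min\mathrm{hsupp}$. Alternatively, use that $M_{n}$ is a summand of $M_{n}\otimes M_{\Delta}$.

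The real divergence is (b)$\leftrightarrow$(c). The paper simply cites \cite[Remark 5.10]{Xu26} for $\mathrm{Spc}(\mathrm{Mod}_{\mathrm{FI}}^{c})\simeq\mathbb{N}^{*}$, whereas you reprove it following Proposition~\ref{prop4.11}. That is feasible and makes the section self-contained, but one of your ingredients is false: $\mathrm{Mod}_{\mathrm{FI}}^{c}/P_{\infty}$ is \emph{not} semisimple.

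Here is a counterexample. Let $k_{\ge n}\subseteq M_{0}$ be the submodule in degrees $\ge n$. The map $M_{1}\to M_{0}$ sending every injection to $1$ has torsion cokernel $\chi_{0,k}$, so it becomes an epimorphism in the quotient. But $\mathrm{Hom}_{\mathrm{FI}}(k_{\ge n},M_{1})=0$ for all $n$. Indeed, an invariant vector $c(1,\dots,1)\in k^{m}=M_{1}(m)$ is sent by $[m]\hookrightarrow[m+1]$ to $c(1,\dots,1,0)$, which is invariant only if $c=0$. Since $M_{1}$ is torsion-free, there are no nonzero maps $\overline{M_{0}}\to\overline{M_{1}}$ in the quotient, so this epimorphism does not split.

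What you actually need is only maximality of $P_{\infty}$, and Lemma~\ref{lem6.5} gives it directly. If $X$ is finitely generated with a torsion-free element $x\in X(d)$, then $M_{d}\in\mathrm{Serre}_{\otimes}\langle X\rangle$. The image of $M_{d}\to M_{0}$ is $k_{\ge d}$, and $M_{0}/k_{\ge d}\in P_{\infty}$. Hence the unit lies in $\mathrm{Serre}_{\otimes}\langle X,P_{\infty}\rangle$.

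Likewise, the step $P_{i}\subseteq\mathrm{Serre}_{\otimes}\langle M_{i+1},\chi_{j}\mid j\neq i\rangle$ cannot use the eventual constancy available for $\mathfrak{C}_{p}$, since it fails for $\mathrm{FI}$-modules. Instead, argue as follows when $M(i)=0$. The part of $M$ in degrees $\ge i+1$ is a quotient of finitely many $M_{n}$ with $n\ge i+1$, and each such $M_{n}$ is a summand of $M_{n}\otimes M_{i+1}$. The remaining quotient has finite length with composition factors $\chi_{j,V}$, $j<i$.
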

\begin{proof}The proof of part (1) is the same as in Theorem \ref{thm4.15}. Suppose $\mathcal{L}\nsubseteq T^{D}$. Then there exists an object $Y\in \mathcal{L}\setminus T^{D}$.  By the tensor equivalence $\mathrm{D}_{\mathrm{FI}}/T^{D}\simeq \mathrm{D}(\mathrm{Mod}_{\mathrm{FI}}/T^{A})$, there exists some $d\in \mathbb{Z}$ such that $H_{d}(Y)$ is not torsion. This means that there exist $m\in \mathbb{N}$ and  $y\in H_{d}(Y)(m)$ such that $y$ is a torsion-free element. It follows from Proposition \ref{prop6.10} that we have $M_{m}\in \mathrm{Loc}_{\otimes}\langle Y\rangle\subseteq \mathcal{L}$ and thus $S(\mathcal{L})$ is cofinite. By Lemma \ref{lem6.6}, the inclusions $i_{m}$ and $j_{m}$ induce a triangle $$(j_{m})_{!}(j_{m})^{*}M_{\Delta_{S(\mathcal{L})}}\rightarrow M_{\Delta_{S(\mathcal{L})}}\rightarrow (i_{m})_{*}(i_{m})^{*}M_{\Delta_{S(\mathcal{L})}}$$ in $\mathrm{D}_{\mathrm{FI}}$ with $(j_{m})_{!}(j_{m})^{*}M_{\Delta_{S(\mathcal{L})}}\in \mathrm{Loc}_{\otimes}\langle M_{m}\rangle\subseteq \mathcal{L}$ and  $(i_{m})_{*}(i_{m})^{*}M_{\Delta_{S(\mathcal{L})}}\in \mathrm{Loc}_{\otimes}\langle \chi_{i,k}\mid i\in S(\mathcal{L})\rangle\subseteq \mathcal{L}$. Thus  $M_{\Delta_{S(\mathcal{L})}}\in \mathcal{L}$. This proves the $"\supseteq"$ containment.\\
For the reverse containment, let $Z\in \mathcal{L}$. If $Z$ is torsion, then $Z$ clearly lies in the desired subcategory. Suppose $Z$ is not torsion. Then $\mathrm{hsupp}(Z)$ is cofinite and we write $\Delta_{Z}$ for $\Delta_{\mathrm{hsupp}(Z)}$. By the choice of $\Delta_{S(\mathcal{L})}$, we have $\Delta_{S(\mathcal{L})}\leq \Delta_{Z}$. Consider the triangle $$(j_{\Delta_{Z}})_{!}(j_{\Delta_{Z}})^{*}Z\rightarrow Z\rightarrow (i_{\Delta_{Z}})_{*}(i_{\Delta_{Z}})^{*}Z$$ in $\mathrm{D}_{\mathrm{FI}}$. Then by an argument similar to the one above, we have $(j_{\Delta_{Z}})_{!}(j_{\Delta_{Z}})^{*}Z\in \mathrm{Loc}_{\otimes}\langle M_{\Delta_{Z}}\rangle$ and $(i_{\Delta_{Z}})_{*}(i_{\Delta_{Z}})^{*}Z\in \mathrm{Loc}_{\otimes}\langle M_{\Delta_{S(\mathcal{L})}}, \chi_{i,k}\mid i \in S(\mathcal{L})\rangle$. Since $M_{\Delta_{Z}}\in \mathrm{Loc}_{\otimes}\langle M_{\Delta_{S(\mathcal{L})}}\rangle$, we have $Z\in \mathrm{Loc}_{\otimes}\langle M_{\Delta_{S(\mathcal{L})}}, \chi_{i,k}\mid i \in S(\mathcal{L})\rangle$, which proves the reverse containment.\\
Finally, it follows from (1) and (2) that the homological support induces a bijection between (a) and (c). The bijections between (b), (c) and (d) come from  \cite[Corollary 2.8]{BKS} and \cite[Remark 5.10]{Xu26}. The bijection between (d) and (e) follows from Lemma \ref{lem4.12}. This concludes our proof.
\end{proof}

\begin{cor} The telescope conjecture holds for $\mathrm{D}_{\mathrm{FI}}$.
\end{cor}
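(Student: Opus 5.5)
The plan mirrors the proof of Corollary \ref{cor5.5}. By Theorem \ref{thm6.13}, every localizing ideal $\mathcal{L}\subseteq \mathrm{D}_{\mathrm{FI}}$ is either torsion, $\mathcal{L}=\mathrm{Loc}_{\otimes}\langle\chi_{i,k}\mid i\in S(\mathcal{L})\rangle$, or of the form $\mathrm{Loc}_{\otimes}\langle M_{\Delta_{S(\mathcal{L})}},\chi_{i,k}\mid i\in S(\mathcal{L})\rangle$ with $S(\mathcal{L})$ cofinite. It therefore suffices to establish two claims: (i) a nonzero torsion localizing ideal is never smashing, and (ii) every non-torsion localizing ideal is compactly generated. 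Compact generation as a localizing ideal is enough, by the Remark following Proposition \ref{Prop2.12}.

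For (i), suppose $\mathcal{L}\neq 0$ is torsion and smashing, so that $\mathcal{L}^{\perp}$ is localizing. I claim $M_{n}\in\mathcal{L}^{\perp}$ for every $n$. Since $M_{n}$ is injective by \cite[Theorem 1.5]{GL15}, we have $\mathrm{Hom}_{\mathrm{D}_{\mathrm{FI}}}(\chi_{i,k},\Sigma^{s}M_{n})\cong \mathrm{Hom}_{\mathrm{FI}}(H_{-s}\chi_{i,k},M_{n})$. This group vanishes because $\chi_{i,k}$ is torsion while $M_{n}$ is torsion-free: the image of any map from a torsion module into $M_n$ is torsion, hence zero. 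The class of objects $X$ with $\mathrm{Hom}(X,\Sigma^{*}M_{n})=0$ is localizing, and it is also tensor-closed, since $\chi_{i,k}\otimes Y$ has torsion homology for every $Y$. It therefore contains $\mathcal{L}$. By Proposition \ref{prop2.8}, $\mathrm{Loc}\langle M_{n}\mid n\in\mathbb{N}\rangle=\mathrm{D}_{\mathrm{FI}}$, so $\mathcal{L}^{\perp}=\mathrm{D}_{\mathrm{FI}}$ and hence $\mathcal{L}=0$, a contradiction. The one point to check carefully here is the identification of Hom from a complex into the injective $M_n$ via homology. This holds because $M_n$ is injective in $\mathrm{Mod}_{\mathrm{FI}}$, so maps into $\Sigma^sM_n$ are computed by $\mathrm{Hom}(H_{-s}(-),M_n)$.

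For (ii), let $F=\mathbb{N}\setminus S(\mathcal{L})$, which is finite. For each $f\in F$, set $m_{f}:=\mathrm{cof}(M_{f,k}\to 1)$ and let $m_{\mathcal{L}}:=\bigotimes_{f\in F}m_{f}$. This object is compact, being a finite tensor product of cones of perfect complexes. The main computation is $\mathrm{hsupp}(m_{\mathcal{L}})=S(\mathcal{L})$. Evaluating at $n$, the map $M_{f,k}(n)\to k$ sends the basis element of each injection orbit to $1$; it is zero for $n<f$ and surjective for $n\geq f$. Its cofiber therefore has homology concentrated in one degree, vanishing exactly when $n=f$. By Künneth (everything is flat), $m_{\mathcal{L}}(n)$ is acyclic iff $n\in F$. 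I expect this to be the only delicate step, and I would check the $n>f$ case by identifying $M_{f,k}(n)$ with $k[\text{$f$-subsets}]$.

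Given this, I prove $\mathcal{L}=\mathrm{Loc}_{\otimes}\langle m_{\mathcal{L}}\rangle$ in two directions. For $\supseteq$, $m_{\mathcal{L}}\in\mathcal{L}$ follows from the triangle of Lemma \ref{lem6.6}(5) with $n=\Delta_{S(\mathcal{L})}$. The $j$-part lies in $\mathrm{Loc}_{\otimes}\langle M_{\Delta}\rangle$ by Lemma \ref{lem6.6}(4) and the argument of Lemma \ref{lem6.9}. The $i$-part has support in $S(\mathcal{L})\cap[0,\Delta)$, hence lies in $\mathrm{Loc}_{\otimes}\langle\chi_{i,k}\mid i\in S(\mathcal{L})\rangle$ by Remark \ref{rem6.8} applied in $\mathrm{D}(\mathrm{FI}_{<\Delta})$ and then pushed forward by $(i_\Delta)_*$. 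For $\subseteq$, Theorem \ref{thm6.4} gives some non-torsion homology of $m_{\mathcal{L}}$, and Proposition \ref{prop6.10} then gives some $M_{r}\in\mathrm{thick}_{\otimes}\langle m_{\mathcal{L}}\rangle$. Alternatively, Proposition \ref{prop6.11} directly gives $M_{\Delta}\in\mathrm{thick}_{\otimes}\langle m_{\mathcal{L}}\rangle$, since $\mathrm{hsupp}(M_\Delta)=[\Delta,\infty)\subseteq S(\mathcal{L})$ and both objects are compact. Finally, for $i\in S(\mathcal{L})$, $\chi_{i,k}$ is a summand of $\chi_{i,k}\otimes m_{\mathcal{L}}\otimes V^{*}$ with $V=m_{\mathcal{L}}(i)\neq0$, using semisimplicity of $k[S_i]$ as in Lemma \ref{lem3.1}. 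Hence $\mathcal{L}$ is compactly generated, and the telescope conjecture holds.
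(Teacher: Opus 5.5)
Your proposal is correct and follows the paper's proof essentially step for step. Nonzero torsion localizing ideals are shown not to be smashing by the argument of Corollary~\ref{cor5.5}, using that $M_n$ is injective and Proposition~\ref{prop2.8}, and each non-torsion ideal is shown to equal $\mathrm{Loc}_{\otimes}\langle m_{\mathcal{L}}\rangle$ for the compact object $m_{\mathcal{L}}=\bigotimes_{f\notin S(\mathcal{L})}\mathrm{cof}(M_{f,k}\to 1)$, via Proposition~\ref{prop6.11}, the direct-summand argument for the $\chi_{i,k}$, and the triangle of Lemma~\ref{lem6.6}(5). Your version is only more explicit than the paper's in two places: the computation $\mathrm{hsupp}(m_{\mathcal{L}})=S(\mathcal{L})$, and the check that the whole ideal $\mathcal{L}$, not just its generators, is left orthogonal to $\Sigma^{*}M_n$.
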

\begin{proof}By the same argument as in the proof of Corollary \ref{cor5.5}, one can show every non-zero torsion localizing ideal of $\mathrm{D}_{\mathrm{FI}}$ is not smashing. It remains to prove that the non-torsion localizing ideal $\mathcal{L}=\mathrm{Loc}_{\otimes}\langle M_{\Delta_{S(\mathcal{L})}}, \chi_{i,k}\mid i \in S(\mathcal{L})\rangle$ is compactly generated. Note that $S(\mathcal{L})$ is cofinite. For each $f\in \mathbb{N}\setminus S(\mathcal{L})$, define $m_{f,k}:=\mathrm{cof}(M_{f,k}\rightarrow 1)$ and set $$m_{\mathcal{L}}:=\displaystyle\bigotimes_{\substack{f\in \mathbb{N}\setminus S(\mathcal{L}) }} m_{f,k}.$$ Then $m_{\mathcal{L}}$ is compact and $\mathrm{hsupp}(m_{\mathcal{L}})=S(\mathcal{L})$. By Proposition \ref{prop6.11}, $M_{\Delta_{S(\mathcal{L})}}\in \mathrm{Loc}_{\otimes}\langle m_{\mathcal{L}}\rangle$.  For $i\in S(\mathcal{L})$, we have $\chi_{i,k}\in \mathrm{Loc}_{\otimes}\langle m_{\mathcal{L}}\rangle$, since $\chi_{i,k}$ is a direct summand of $\chi_{i,k}\otimes m_{\mathcal{L}}\otimes V^{*}$ where $V=m_{\mathcal{L}}(i)$. Hence             $\mathcal{L}\subseteq\mathrm{Loc}_{\otimes}\langle m_{\mathcal{L}}\rangle$. Conversely, we have a triangle $$(j_{\Delta_{S(\mathcal{L})}})_{!}(j_{\Delta_{S(\mathcal{L})}})^{*}m_{\mathcal{L}}\rightarrow m_{\mathcal{L}}\rightarrow (i_{\Delta_{S(\mathcal{L})}})_{*}(i_{\Delta_{S(\mathcal{L})}})^{*}m_{\mathcal{L}}$$ with $(j_{\Delta_{S(\mathcal{L})}})_{!}(j_{\Delta_{S(\mathcal{L})}})^{*}m_{\mathcal{L}}$ and $(i_{\Delta_{S(\mathcal{L})}})_{*}(i_{\Delta_{S(\mathcal{L})}})^{*}m_{\mathcal{L}}$ in $\mathcal{L}$. Hence $m_{\mathcal{L}}\in \mathcal{L}$, and so $\mathcal{L}=\mathrm{Loc}_{\otimes}\langle m_{\mathcal{L}}\rangle$, which is compactly generated. This concludes the proof.
\end{proof}

\renewcommand\refname
\vspace{4mm}

\textbf{Peng Xu}\\
School of Mathematics, Nanjing University,
Nanjing 210093, P. R. China;\\
E-mail: \textsf{602023210017@smail.nju.edu.cn}\\[1mm]

\end{document}